\def\ps@pprintTitle{%
 \let\@oddhead\@empty
 \let\@evenhead\@empty
 \def\@oddfoot{\centerline{\thepage}}%
 \let\@evenfoot\@oddfoot}
\newtheorem{theorem}{Theorem}
\newtheorem{proposition}[theorem]{Proposition}
\newtheorem{lemma}[theorem]{Lemma}
\theoremstyle{remark}
\newtheorem{remark}[theorem]{Remark}
\theoremstyle{definition}
\newtheorem{definition}[theorem]{Definition}
\newcommand{\st}{\,:\,}
\newcommand{\ud}[1]{\,\mathrm{d}#1}
\newcommand{\Real}{\mathbb{R}}
\newcommand{\Natural}{\mathbb{N}}
\newcommand{\norm}[2][]{\|#2\|_{#1}}
\newcommand{\seminorm}[2][]{|#2|_{#1}}
\newcommand{\tnorm}[2][]{\vvvert #2\vvvert_{#1}}
\newcommand{\GRAD}{{\vec{\nabla}}}
\newcommand{\DIV}{\vec{\nabla}{\cdot}}
\newcommand{\LAPL}{{\triangle}}
\DeclareMathOperator{\oDIV}{div}
\DeclareMathOperator{\card}{card}
\newcommand{\SCAL}{{\cdot}}
\newcommand{\GD}[1][]{\mathcal{D}_{#1}}
\newcommand{\XDz}[1][]{X_{\GD[#1],0}}
\newcommand{\PiD}[1][]{\Pi_{\GD[#1]}}
\newcommand{\grD}[1][]{\vec{\nabla}_{\GD[#1]}}
\newcommand{\grT}{\vec{\mathcal{G}}_T}
\newcommand{\grDt}[1][]{\widetilde{\vec{\nabla}}_{\GD[#1]}}
\newcommand{\grTt}{\widetilde{\vec{\mathcal{G}}}_T}
\newcommand{\CD}[1][]{C_{\GD[#1]}}
\newcommand{\SD}[1][]{S_{\GD[#1]}}
\newcommand{\WD}[1][]{W_{\GD[#1]}}
\newcommand{\Mh}{\mathcal{M}_h}
\newcommand{\Th}[1][h]{\mathcal{T}_{#1}}
\newcommand{\Sh}[1][h]{\mathfrak{T}_{#1}}
\newcommand{\Fh}[1][h]{\mathcal{F}_{#1}}
\newcommand{\Fhi}[1][h]{\mathcal{F}_{#1}^{\rm i}}
\newcommand{\Fhb}[1][h]{\mathcal{F}_{#1}^{\rm b}}
\newcommand{\PTF}[1][TF]{P_{#1}}
\newcommand{\normal}{\vec{n}}
\newcommand{\Poly}[1]{\mathbb{P}^{#1}}
\newcommand{\lproj}[2][T]{\pi_{#1}^{0,#2}}
\newcommand{\vlproj}[2][T]{\vec{\pi}_{#1}^{0,#2}}
\newcommand{\eproj}[2][T]{\pi_{#1}^{1,#2}}
\newcommand{\UT}[1][k,l]{\underline{U}_T^{#1}}
\newcommand{\IT}[1][k,l]{\underline{I}_T^{#1}}
\newcommand{\Uh}[1][k,l]{\underline{U}_h^{#1}}
\newcommand{\Ih}[1][k,l]{\underline{I}_h^{#1}}
\newcommand{\Uhz}[1][k,l]{\underline{U}_{h,0}^{#1}}
\newcommand{\uu}[1][h]{\underline{u}_{#1}}
\newcommand{\uv}[1][h]{\underline{v}_{#1}}
\newcommand{\uvm}[1][h]{\underline{v}_{#1}^0}
\newcommand{\uw}[1][h]{\underline{w}_{#1}}
\newcommand{\huv}[1][T]{\widehat{\underline{v}}_{#1}}
\newcommand{\bphi}{\vec{\phi}}
\newcommand{\rT}[1][k+1]{\mathrm{r}_T^{#1}}
\newcommand{\GT}[1][k]{\vec{\mathrm{G}}_T^{#1}}
\newcommand{\ST}[1][]{\vec{\mathrm{S}}_T^{#1}}
\newcommand{\LTF}[1][k+1]{\vec{\mathrm{L}}_{TF}^{#1}}
\newcommand{\Gh}[1][k]{\vec{\mathrm{G}}_h^{#1}}
\newcommand{\LTFt}[1][k]{\widetilde{\vec{\mathrm{L}}}_{TF}^{#1}}
\newcommand{\STt}[1][]{\widetilde{\vec{\mathrm{S}}}_T^{#1}}
\newcommand{\RangeST}[1][]{\vec{\mathfrak S}_T^{#1}}
\newcommand{\dT}[1][l]{\delta_T^{#1}}
\newcommand{\dGT}[1][k]{\vec{\delta}_{\nabla,T}^{#1}}
\newcommand{\dTF}[1][k]{\delta_{TF}^{#1}}
\newcommand{\RT}[1][k+1]{\mathbb{RT}^{#1}}
\newcommand{\control}{\mathbf{\Phi}}
\newcommand{\Ndk}[1][d,k]{N_{#1}}
\newcommand{\logLogSlopeTriangle}[5]
{

    \pgfplotsextra
    {
        \pgfkeysgetvalue{/pgfplots/xmin}{\xmin}
        \pgfkeysgetvalue{/pgfplots/xmax}{\xmax}
        \pgfkeysgetvalue{/pgfplots/ymin}{\ymin}
        \pgfkeysgetvalue{/pgfplots/ymax}{\ymax}

        \pgfmathsetmacro{\xArel}{#1}
        \pgfmathsetmacro{\yArel}{#3}
        \pgfmathsetmacro{\xBrel}{#1-#2}
        \pgfmathsetmacro{\yBrel}{\yArel}
        \pgfmathsetmacro{\xCrel}{\xArel}

        \pgfmathsetmacro{\lnxB}{\xmin*(1-(#1-#2))+\xmax*(#1-#2)} 
        \pgfmathsetmacro{\lnxA}{\xmin*(1-#1)+\xmax*#1} 
        \pgfmathsetmacro{\lnyA}{\ymin*(1-#3)+\ymax*#3} 
        \pgfmathsetmacro{\lnyC}{\lnyA+#4*(\lnxA-\lnxB)}
        \pgfmathsetmacro{\yCrel}{\lnyC-\ymin)/(\ymax-\ymin)} 

        \coordinate (A) at (rel axis cs:\xArel,\yArel);
        \coordinate (B) at (rel axis cs:\xBrel,\yBrel);
        \coordinate (C) at (rel axis cs:\xCrel,\yCrel);

        \draw[#5]   (A)-- node[pos=0.5,anchor=north] {\scriptsize{1}}
                    (B)-- 
                    (C)-- node[pos=0.,anchor=west] {\scriptsize{#4}} 
                    cycle;
    }
}
\newcommand{\INTP}  {\footnotesize{\texttt{I}}}
\newcommand{\vsI} {v^{\INTP}}
\newcommand{\vmfd}{\mathrm{v}}
\newcommand{\umfd}{\mathrm{u}}
\newcommand{\wmfd}{\mathrm{w}}
\newcommand{\Vhk}{V^{h,nc}_{k}}
\newcommand{\Vhkz}{V^{h,nc}_{k,0}}
\newcommand{\jump}[1]{\lbrack\!\lbrack\,#1\,\rbrack\!\rbrack} 
\newcommand{\vh}[1][h]{\mathfrak{v}_{#1}}
\newcommand{\uh}[1][h]{\mathfrak{u}_{#1}}
\def\lprop#1{\label{#1}\textbf{(#1)}}
\def\rprop#1{\hyperref[#1]{\textbf{(#1)}}}
\title{Discontinuous Skeletal Gradient Discretisation Methods on polytopal meshes\tnoteref{acknowledgements}}
\author[imag]{Daniele A. Di Pietro} %
\ead{daniele.di-pietro@umontpellier.fr} %
\author[monash]{J\'{e}r\^{o}me Droniou} %
\ead{jerome.droniou@monash.edu}
\author[lanl]{Gianmarco Manzini} %
\ead{gmanzini@lanl.gov}
\address[imag]{Institut Montpelli\'{e}rain Alexander Grothendieck, CNRS, Univ. Montpellier (France)}
\address[monash]{School of Mathematical Sciences, Monash University, Melbourne (Australia)}
\address[lanl]{T-5 Applied Mathematics and Plasma Physics Group, Los Alamos National Laboratory, Los Alamos, New Mexico (USA)}
\begin{document}

\begin{abstract}
  In this work we develop arbitrary-order Discontinuous Skeletal Gradient Discretisations (DSGD) on general polytopal meshes.
  Discontinuous Skeletal refers to the fact that the globally coupled unknowns are broken polynomials on the mesh skeleton.
  The key ingredient is a high-order gradient reconstruction composed of two terms:
  \begin{inparaenum}[(i)]
  \item a consistent contribution obtained mimicking an integration by parts formula inside each element and
  \item a stabilising term for which sufficient design conditions are provided.
  \end{inparaenum}
  An example of stabilisation that satisfies the design conditions is proposed based on a local lifting of high-order residuals on a Raviart--Thomas--N\'ed\'elec subspace.
  We prove that the novel DSGDs satisfy coercivity, consistency, limit-conformity, and compactness requirements that ensure convergence for a variety of elliptic and parabolic problems.
  Links with Hybrid High-Order, non-conforming Mimetic Finite Difference and non-conforming Virtual Element methods are also studied.
  Numerical examples complete the exposition.
\end{abstract}

\begin{keyword}
  Gradient discretisation methods\sep%
  Gradient Schemes\sep%
  high-order Mimetic Finite Difference methods\sep%
  Hybrid High-Order methods\sep%
  Virtual Element methods\sep%
  non-linear problems
  \MSC[2010] 65N08, 65N30, 65N12
\end{keyword}

\maketitle


\section{Introduction}

The numerical resolution of (linear or non-linear) partial differential equations (PDEs) is nowadays ubiquitous in the engineering practice.
In this context, the design of convergent numerical schemes is a very active research topic.
The Gradient Discretisation Method (GDM) is a recently introduced framework which identifies key design properties to obtain convergent schemes for a variety of linear and non-linear elliptic and parabolic problems.
Several models of current use in fluid mechanics fall into the latter categories including, e.g., porous media flows governed by Darcy's law, phase change problems governed by the Stefan problem~\cite{Friedman:68}, as well as simplified models of the viscous terms in power-law fluids corresponding the Leray--Lions elliptic operators.
The latter also appear in the modelling of glacier motion \cite{Glowinski.Rappaz:03}, of incompressible turbulent flows in porous media \cite{Diaz.Thelin:94}, and in airfoil design \cite{Glowinski:84}.

A Gradient Discretisation (GD) is defined by a finite-dimensional space encoding the discrete unknowns, as well as two linear operators acting on the latter, and corresponding to reconstructions of scalar functions and of their gradient.
For a given PDE problem, convergent GDs are characterised by four properties, which can also serve as guidelines for the design of new schemes: \emph{coercivity}, which corresponds to a discrete Poincar\'e inequality; \emph{GD-consistency}, which expresses the ability of the scalar and gradient reconstructions to approximate functions in the space where the continuous problem is set; \emph{limit-conformity}, linking the two reconstructions through an approximate integration by parts formula; \emph{compactness}, corresponding to a discrete counterpart of the Rellich theorem.

In the recent monograph~\cite{gdm}, several classical discretisation methods have been interpreted in the GDM framework. These include: arbitrary-order conforming, nonconforming, and mixed Finite Elements (FE) on standard meshes; arbitrary-order discontinuous Galerkin (DG) schemes in their SIPG form \cite{Arnold:82} (see, in particular, \cite{Eymard.Guichard:17} on this point); various lowest-order Finite Volume methods on specific grids; lowest-order methods belonging to the Hybrid Mixed Mimetic family (see the unified presentation in~\cite{Droniou.Eymard.ea:10} of the methods originally proposed in~\cite{Brezzi.Lipnikov.ea:05,Droniou.Eymard:06,Eymard.Gallouet.ea:10}) as well as nodal Mimetic Finite Differences (MFD)~\cite{Brezzi-Lipnikov-Simoncini:2005} on arbitrary polyhedral meshes; see also~\cite{Beirao-da-Veiga.Lipnikov.ea:14}.

In this paper we present an important addition to the GDM framework: arbitrary-order Discontinuous Skeletal (DS) methods~\cite{Di-Pietro.Droniou.ea:15}, characterised by globally coupled unknowns that are broken polynomials on the mesh skeleton.
Specifically, the primary source of inspiration are the recently introduced Hybrid High-Order (HHO) methods for linear~\cite{Di-Pietro.Ern.ea:14,Di-Pietro.Ern:15} and non-linear~\cite{Di-Pietro.Droniou:16,Di-Pietro.Droniou:16*1} diffusion problems, and the high-order non-conforming MFD (ncMFD) method of~\cite{Lipnikov-Manzini:2014}; see also~\cite{Ayuso-de-Dios.Lipnikov.ea:16} for an interpretation in the Virtual Element framework and~\cite{Beirao-da-Veiga.Brezzi.ea:13} for an introduction to the latter.
We also cite here the Hybridizable Discontinuous Galerkin methods of~\cite{Cockburn.Gopalakrishnan.ea:09}, whose link with the former methods has been studied in~\cite{Cockburn.Di-Pietro.ea:15}; see also~\cite{Boffi.Di-Pietro:17} for a unified formulation.
Like DG methods, DS methods support arbitrary approximation orders on general polytopal meshes.
DS methods are, in addition, amenable to static condensation for linear(ised) problems, which can significantly reduce the number of unknowns in some configurations.
They also have better data locality, which can ease parallel implementations.
Moreover, lowest-order versions are often available that can be easily fitted into traditional Finite Volume simulators.
Finally, unlike DG methods, DS methods admit a Fortin operator in general meshes, a crucial property in the context of incompressible or quasi-incompressible problems in solid- and fluid-mechanics; see, e.g., \cite{Di-Pietro.Ern:15,Di-Pietro.Krell:17}.

Let a polynomial degree $k\ge 0$ be given.
The Discontinuous Skeletal Gradient Discretisations (DSGD) studied here hinge on face unknowns that ensure the global coupling and that correspond to broken polynomials of total degree up to $k$ on the mesh skeleton, as well as locally coupled element-based unknowns that correspond to broken polynomials of total degree up to $l\in\{k-1,k,k+1\}$ on the mesh itself.
The reconstruction of scalar functions is defined in a straightforward manner through the latter if $l\ge 0$, or by a suitable combination of face-based unknowns if $l=-1$.
The gradient reconstruction, on the other hand, requires a more careful design.
The seminal ideas to devise high-order gradient reconstructions on general meshes are already present, among others, in HHO methods (see, e.g.,~\cite[Eq. (13)]{Di-Pietro.Ern.ea:14} and~\cite[Eq. (4.3)]{Di-Pietro.Droniou:16}) as well as in ncMFD methods (see~\cite[Eq. (21)]{Lipnikov-Manzini:2014}).
These gradient reconstructions, however, are not suitable to define a convergent DSGD because they fail to satisfy the coercivity requirement.
In addition, when considering non-linear problems, the codomain of the gradient reconstruction has to be carefully selected in order for the GD-consistency requirement to be satisfied with optimal scaling in the meshsize for $k\ge 1$ (this point was already partially recognised in~\cite{Di-Pietro.Droniou:16}).
In the context of DG methods, a stable discrete gradient based on a variation of the method originally proposed in \cite{Castillo.Cockburn.ea:00} has been recently studied in~\cite{John.Neilan.ea:15}.

The main novelty of this work is the introduction of a gradient reconstruction that meets all the requirements to define a convergent GD, and which satisfies the limit-conformity property with an error that scales optimally in the meshsize.
This gradient reconstruction is composed of two terms: a consistent contribution closely inspired by~\cite[Eq. (4.3)]{Di-Pietro.Droniou:16} and a stabilisation term.
Two design conditions are identified for the stabilisation term:
\begin{inparaenum}[(i)]
\item local stability and boundedness with respect to a suitable boundary seminorm and
\item $L^2$-orthogonality to vector-valued polynomials of degree up to $k$.
\end{inparaenum}
When considering problems posed in a non-Hilbertian setting, an additional condition is added stipulating that the stabilisation is built on a piecewise polynomial space.
An example of stabilisation term that meets all of the above requirement is proposed based on a  Raviart--Thomas--N\'{e}d\'{e}lec space on a submesh.

The rest of the paper is organised as follows.
In Section~\ref{sec:GDM} we recall the basics of the GDM and give a few examples of linear and non-linear problems for which GDs are convergent under the coercivity, GD-consistency, limit-conformity, and compactness properties discussed above.
The construction of arbitrary-order DSGD is presented in Section~\ref{sec:DSGD}, the main results are stated in Section~\ref{sec:main.results}, and numerical examples are collected in Section~\ref{sec:numerical.examples}.
The links with HHO, ncMFD, and ncVEM schemes are studied in detail in Section~\ref{sec:links}.
\ref{sec:proofs} contains the proofs of the main results.
The material is organised so that multiple levels of reading are possible: readers mainly interested in the numerical recipe and results can primarily focus on Sections~\ref{sec:GDM}--\ref{sec:DSGD}; readers also interested in the relations with other polytopal methods can consult Section~\ref{sec:links}.


\section{The Gradient Discretisation Method}\label{sec:GDM}

We give here a brief presentation of the Gradient Discretisation Method (GDM) in the context
of homogeneous Dirichlet boundary conditions, and we refer to the monograph \cite{gdm} for more details
and other boundary conditions.

\subsection{Gradient Discretisations and Gradient Schemes}

Let $\Omega$ be a bounded polytopal domain in $\Real^d$, where $d\ge 1$ is the space dimension.
We consider elliptic or parabolic problems whose weak formulation is set in $W^{1,p}_0(\Omega)$, where $p\in(1,+\infty)$ denotes a Sobolev exponent which we assume fixed in what follows.

A \emph{Gradient Discretisation (GD)} is a triplet $\GD=(\XDz,\PiD,\grD)$ where:

\begin{compactenum}[(i)]
\item $\XDz$ is a finite dimensional vector space on $\Real$
  encoding the \emph{discrete unknowns}, and accounting for the
  homogeneous Dirichlet boundary condition;
  
  \medskip
\item $\PiD\,:\,\XDz\to L^p(\Omega)$ is a linear mapping that
  reconstructs scalar functions in $ L^p(\Omega)$ from the discrete unknowns in $\XDz$;

  \medskip
\item $\grD\,:\,\XDz\to L^p(\Omega)^d$ is a linear mapping that
  reconstructs the ``gradient'' of scalar functions in $ L^p(\Omega)^d$
  from the unknowns in $\XDz$. This reconstruction
  must be defined such that $\norm[L^p(\Omega)^d]{\grD\SCAL}$ is a
  norm on $\XDz$.
\end{compactenum}

In a nutshell, the GDM consists in selecting a GD and in replacing, in the weak
formulation of the PDE, the continuous space and operators by the discrete ones provided by the GD. The scheme
thus obtained is called a \emph{Gradient Scheme} (GS). To illustrate this procedure, consider the simple linear problem:
Find $u:\Omega\to\Real$ such that
\begin{equation}\label{lin.ell}
  \begin{alignedat}{2}
    -\DIV(\matr{\Lambda}\GRAD u)&=f &\qquad&\text{ in $\Omega$},\\
    u&=0 &\qquad&\text{ on $\partial\Omega$},
  \end{alignedat}
\end{equation}
with diffusion tensor $\matr{\Lambda}$ bounded and uniformly coercive,
and source term $f\in L^2(\Omega)$. The weak formulation of
\eqref{lin.ell} is
\begin{equation}\label{lin.ell.w}
\mbox{Find $u\in H^1_0(\Omega)$ such that, for all $v\in H^1_0(\Omega)$, }\int_\Omega \matr{\Lambda} \GRAD u
\SCAL\GRAD v=\int_\Omega fv.
\end{equation}
Given a gradient discretisation $\GD$, the gradient scheme for \eqref{lin.ell.w} is then
\begin{equation}\label{lin.ell.gs}
  \mbox{Find $u_{\GD}\in \XDz$ such that, for all $v_{\GD}\in \XDz$, }\int_\Omega \matr{\Lambda} \grD u_{\GD}
  \SCAL\grD v_{\GD}=\int_\Omega f\PiD v_{\GD}.
\end{equation}

The same procedure applies to non-linear problems.
Consider, e.g., the following generalisation of~\eqref{lin.ell} that corresponds to Leray--Lions operators:
Find $u:\Omega\to\Real$ such that
\begin{equation}\label{eq:leray.lions:strong}
  \begin{alignedat}{2}
    -\DIV\vec{\sigma}(\vec{x},u,\GRAD u) &= f &\qquad&\text{in $\Omega$},
    \\
    u &= 0 &\qquad&\text{on $\partial\Omega$},
  \end{alignedat}
\end{equation}
where the flux function $\vec{\sigma}:\Omega\times\Real\times\Real^d\to\Real^d$ satisfies the requirements detailed in~\cite[Eq. (2.85)]{gdm}.
A paradigmatic example of this class of problems is the $p$-Laplace equation which, for a fixed $p\in(1,+\infty)$, corresponds to the flux function
\begin{equation}\label{eq:flux.plap}
  \vec{\sigma}(\vec{x},u,\GRAD u) 
  = |\GRAD u|^{p-2}\GRAD u.
\end{equation}
Assuming $f\in L^{p'}(\Omega)$ with $p'\coloneq\frac{p}{p-1}$, Problem \eqref{eq:leray.lions:strong} admits the following weak formulation:
\begin{equation}\label{eq:plap:weak}
  \text{%
    Find $u\in W^{1,p}_0(\Omega)$ such that, for all $v\in W^{1,p}_0(\Omega)$,
    $\int_\Omega\vec{\sigma}(\vec{x},u,\GRAD u)\SCAL\GRAD v = \int_\Omega fv$.%
  }    
\end{equation}
Given a gradient discretisation $\GD$, the gradient scheme for \eqref{eq:plap:weak} is then
\begin{equation}\label{eq:plap:gs}
  \mbox{Find $u_{\GD}\in \XDz$ such that, for all $v_{\GD}\in \XDz$, }
  \int_\Omega \vec{\sigma}(\vec{x},\PiD u,\grD u_{\GD})\SCAL\grD v_{\GD}=\int_\Omega f\PiD v_{\GD}.
\end{equation}

\subsection{Convergent Gradient Schemes}\label{sec:convergent.GS}

The accuracy and convergence of GSs, for linear and non-linear problems, can be assessed by a few properties of the underlying GDs.
In many situations, and in all cases considered in this paper, GDs are obtained starting from a mesh of the domain.
We consider here \emph{polytopal meshes} corresponding to couples $\Mh\coloneq(\Th,\Fh)$, where $\Th$ is a finite collection of polytopal elements $T$
of maximum diameter equal to $h>0$, while $\Fh$ is a finite collection of hyperplanar faces $F$.
  It is assumed henceforth that the mesh $\Mh$ matches the weak geometrical requirements detailed in \cite[Definition 7.2]{gdm}; see also~\cite[Section 2]{Di-Pietro.Tittarelli:17}.
  Our focus is on the so-called $h$-convergence analysis, where we consider a sequence of refined meshes $(\Mh)_{h\in{\cal H}}$ whose sizes are collected in a countable set ${\cal H}\subset \Real_*^+$ having $0$ as its unique accumulation point.
  We further assume that the polytopal mesh sequences that we deal with are \emph{regular} in the sense of~\cite[Definition~3]{Di-Pietro.Tittarelli:17}, and we denote by $\varrho>0$ the corresponding regularity parameter.

The following properties allow us to single out sequences $(\GD[h])_{h\in{\cal H}}=(\XDz[h],\PiD[h],\grD[h])_{h\in{\cal H}}$ of GDs that lead to gradient schemes that converge, for both linear and non-linear problems:

\begin{enumerate}
\item[\lprop{GD1}] \emph{Coercivity.} Consider, for all $h\in{\cal H}$, the norm of the linear mapping $\PiD[h]$ defined by:
  $$
  \CD[h] \coloneq \max_{v\in\XDz[h]\setminus\{0\}}\frac{\norm[L^p(\Omega)]{\PiD[h] v}}{\norm[L^p(\Omega)^d]{\grD[h] v}}.
  $$
  Then, there exists a real number $C_{\rm P}>0$ such that $\CD[h]\leq C_{\rm P}$ for all $h\in{\cal H}$.
\item[\lprop{GD2}]  \emph{GD-Consistency.} For all $h\in{\cal H}$, let $\SD[h]\,:\, W^{1,p}_0(\Omega)\to[0,+\infty)$ be such that
  \begin{equation*}\label{eq:SD}
    \SD[h](\phi) \coloneq \min_{ v\in\XDz[h]}\left(
    \norm[L^p(\Omega)]{\PiD[h] v-\phi} + \norm[L^p(\Omega)^d]{\grD[h] v-\GRAD\phi}
    \right)
    \qquad\forall\phi\in W^{1,p}_0(\Omega).
  \end{equation*}
  Then, it holds that
  \begin{equation}\label{SD.to.0}
    \lim_{h\to 0}\SD[h](\phi) = 0 \qquad\forall\phi\in W^{1,p}_0(\Omega).
  \end{equation}
\item[\lprop{GD3}]  \emph{Limit-conformity.}  Let $p'\coloneq \frac{p}{p-1}$ denote the dual exponent of $p$, and set
$\vec{W}^{p'}(\oDIV;\Omega)\coloneq\{\vec{\psi}\in L^{p'}(\Omega)^d\,:\,\DIV\vec{\psi}\in L^{p'}(\Omega)\}$.
For all $h\in{\cal H}$, let $\WD[h]\,:\,\vec{W}^{p'}(\oDIV;\Omega)\to[0,+\infty)$ be such that, for all $\vec{\psi}\in \vec{W}^{p'}(\oDIV;\Omega)$,
  $$
  \WD[h](\vec{\psi}) \coloneq \sup_{ v\in\XDz[h]\setminus\{0\}}
  \frac{1}{\norm[L^p(\Omega)^d]{\grD[h] v}}
  \left|
  \int_{\Omega} \Big(\grD[h] v(\vec{x})\SCAL\vec{\psi}(\vec{x}) + \PiD[h] v(\vec{x})\DIV\vec{\psi}(\vec{x}) \Big) \ud{\vec{x}}
  \right|.
  $$
  Then, it holds that
  \begin{equation}\label{WD.to.0}
  \lim_{h\to 0}\WD[h](\vec{\psi}) = 0 \qquad\forall\vec{\psi}\in \vec{W}^{p'}(\oDIV;\Omega).
\end{equation}
\item[\lprop{GD4}]  \emph{Compactness.} For any $v_h\in \XDz[h]$ such that $(\norm[L^p(\Omega)^d]{\grD[h] v_h})_{h\in{\cal H}}$
  is bounded, the sequence $(\PiD[h]v_h)_{h\in{\cal H}}$ is relatively compact in $L^p(\Omega)$.
\end{enumerate}
A few comments are of order. Property \rprop{GD1} is linked to the stability of the method, and stipulates that the $L^p$-norm of the reconstruction of scalar functions is uniformly controlled by the $L^p$-norm of the reconstruction of their gradient. It readily implies the uniform Poincar\'e inequality $\norm[L^p(\Omega)]{\Pi_{\GD[h]} v_h}\leq C_{\rm P}\norm[L^p(\Omega)^d]{\grD[h] v_h}$ valid for all $h\in{\cal H}$ and all $v_h\in\XDz[h]$.

Properties \rprop{GD2} and \rprop{GD3} are linked to the consistency of the method.
More specifically, property \rprop{GD2} states that the reconstructions $\PiD[h]$ of scalar functions and $\grD[h]$ of their gradients are able to approximate functions that lie in the space $W^{1,p}_0(\Omega)$ where the continuous problem is set.
In the context of the FE convergence analysis, this property is an attribute of the underlying discrete space, and is usually called \emph{approximability}; see, e.g.,~\cite[Definition~2.14]{Ern.Guermond:04}.
Property \rprop{GD3}, on the other hand, establishes a link between $\PiD[h]$ and $\grD[h]$ in the form of a discrete integration by parts formula.
Its counterpart in the context of the FE convergence analysis for linear problems is \emph{asymptotic consistency}; see, e.g.,~\cite[Definition~2.15]{Ern.Guermond:04}. Notice, however, that the formulation in \rprop{GD3} is in a sense more general, as it is not linked to a specific underlying problem and is in particular readily applicable to non-linear problems (whereas \cite[Definition~2.15]{Ern.Guermond:04} is restricted to linear problems).

Finally, property \rprop{GD4} is a discrete Rellich compactness theorem, and can be regarded as the key ingredient to obtain strong convergence results by compactness techniques.

\begin{remark}[Limit-conformity or compactness implies coercivity]
Either one of \rprop{GD3} or \rprop{GD4} imply \rprop{GD1}, see~\cite[Lemmas 2.7 and 2.11]{gdm}. The coercivity is however kept as a separate property to highlight its importance.
\end{remark}

The above properties are sufficient to carry out a convergence analysis, either by error estimates (when the
model is amenable to these) or by compactness, for a variety of linear and non-linear elliptic or parabolic models.
An example of such convergence results for gradient discretisations of the Leray--Lions problem~\eqref{eq:plap:weak} is provided by Theorems~\ref{thm:convergence.example} and \ref{thm:error.est.example} below; see~\cite{gdm} for a comprehensive collection of convergence results for various linear and non-linear elliptic and parabolic problems.

\begin{theorem}[Convergence]\label{thm:convergence.example}
We assume that $\vec{\sigma}$ satisfies the classical properties of Leray--Lions operators
(see~\cite[Eqs. (2.85) and (2.87)]{gdm}).
  Let $(\GD[h])_{h\in{\cal H}}$ denote a sequence of GDs satisfying \rprop{GD1}--\rprop{GD4}.
  Then, for all $h\in{\cal H}$, there exists at least one $u_{\GD[h]}\in\XDz[h]$ solution to~\eqref{eq:plap:gs} and, along a subsequence as $h\to 0$,
  \begin{inparaenum}[(i)]
  \item $\PiD[h]u_{\GD[h]}$ converges strongly in $L^p(\Omega)$ to a solution $u$ of~\eqref{eq:plap:weak};
  \item $\grD[h]u_{\GD[h]}$ converges strongly in $L^p(\Omega)^d$ to $\GRAD u$.
  \end{inparaenum}
\end{theorem}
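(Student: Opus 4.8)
The plan is to follow the now-standard GDM convergence machinery (as in~\cite{gdm}), which reduces the proof to the four abstract properties \rprop{GD1}--\rprop{GD4} plus Minty-type monotonicity arguments. First I would establish \emph{existence} of a discrete solution $u_{\GD[h]}$ for each fixed $h$: since $\XDz[h]$ is finite-dimensional and $v\mapsto\norm[L^p(\Omega)^d]{\grD[h] v}$ is a norm on it, the map $v\mapsto \int_\Omega\vec\sigma(\vec x,\PiD[h] v,\grD[h] v)\SCAL\grD[h] v$ is continuous and, using the coercivity hypothesis on $\vec\sigma$ together with \rprop{GD1} (uniform Poincar\'e) and the fact that $f\in L^{p'}(\Omega)$, one shows that $v\mapsto \int_\Omega\vec\sigma(\vec x,\PiD[h] v,\grD[h] v)\SCAL\grD[h] v - \int_\Omega f\PiD[h] v$ is positive on a large enough sphere of $\XDz[h]$; a topological-degree / Brouwer-type argument then yields a zero, i.e. a solution of~\eqref{eq:plap:gs}.

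Next I would derive the \emph{a priori estimate}: testing~\eqref{eq:plap:gs} with $v_{\GD[h]}=u_{\GD[h]}$, using the Leray--Lions coercivity of $\vec\sigma$ to bound $\int_\Omega\vec\sigma\SCAL\grD[h] u_{\GD[h]}$ from below by (a constant times) $\norm[L^p(\Omega)^d]{\grD[h] u_{\GD[h]}}^p$ minus lower-order terms, and bounding the right-hand side via H\"older and \rprop{GD1}, one obtains $\norm[L^p(\Omega)^d]{\grD[h] u_{\GD[h]}}\le C$ uniformly in $h$. By \rprop{GD1} this also bounds $\norm[L^p(\Omega)]{\PiD[h] u_{\GD[h]}}$. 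Invoking \rprop{GD4} (discrete Rellich), along a subsequence $\PiD[h] u_{\GD[h]}\to u$ strongly in $L^p(\Omega)$; the uniform gradient bound gives, again up to a subsequence, $\grD[h] u_{\GD[h]}\rightharpoonup \vec G$ weakly in $L^p(\Omega)^d$. Using \rprop{GD3} (limit-conformity) one identifies $\vec G=\GRAD u$ with $u\in W^{1,p}_0(\Omega)$: indeed $\WD[h](\vec\psi)\to0$ forces $\int_\Omega(\vec G\SCAL\vec\psi + u\,\DIV\vec\psi)=0$ for all $\vec\psi\in\vec{W}^{p'}(\oDIV;\Omega)$, which is exactly the weak characterisation of $u\in W^{1,p}_0(\Omega)$ and $\GRAD u=\vec G$.

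The heart of the argument is then showing that $u$ solves~\eqref{eq:plap:weak} and that the convergences are in fact \emph{strong}, which I would do by a Minty monotonicity trick. Passing to the limit in~\eqref{eq:plap:gs} tested against $v_{\GD[h]}=\IT[h]\varphi$ for $\varphi\in W^{1,p}_0(\Omega)$ (an interpolate supplied by \rprop{GD2}, so that $\PiD[h]\IT[h]\varphi\to\varphi$ and $\grD[h]\IT[h]\varphi\to\GRAD\varphi$) shows that the weak-$L^{p'}$ limit $\overline{\vec\sigma}$ of $\vec\sigma(\vec x,\PiD[h] u_{\GD[h]},\grD[h] u_{\GD[h]})$ satisfies $\int_\Omega\overline{\vec\sigma}\SCAL\GRAD\varphi=\int_\Omega f\varphi$; combining the energy identity obtained from testing with $u_{\GD[h]}$ (and \rprop{GD2}, \rprop{GD3} to handle the limit of $\int_\Omega\vec\sigma\SCAL\grD[h] u_{\GD[h]}$) with the monotonicity inequality $\int_\Omega(\vec\sigma(\vec x,\PiD[h] u_{\GD[h]},\grD[h] u_{\GD[h]})-\vec\sigma(\vec x,\PiD[h] u_{\GD[h]},\GRAD\varphi))\SCAL(\grD[h] u_{\GD[h]}-\GRAD\varphi)\ge0$ and letting $h\to0$ then $\varphi\to u$ yields $\overline{\vec\sigma}=\vec\sigma(\vec x,u,\GRAD u)$, so $u$ solves~\eqref{eq:plap:weak}. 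The same Minty computation, read as an equality in the limit, gives $\int_\Omega\vec\sigma(\vec x,u,\grD[h] u_{\GD[h]})\SCAL\grD[h] u_{\GD[h]}\to\int_\Omega\vec\sigma(\vec x,u,\GRAD u)\SCAL\GRAD u$, from which strict monotonicity of $\vec\sigma$ upgrades the weak convergence $\grD[h] u_{\GD[h]}\rightharpoonup\GRAD u$ to strong convergence in $L^p(\Omega)^d$ (and hence of $\PiD[h] u_{\GD[h]}$ too). The main obstacle is precisely this last step: carefully passing to the limit in the nonlinear term $\int_\Omega\vec\sigma\SCAL\grD[h] u_{\GD[h]}$ using only \rprop{GD2}--\rprop{GD3}, and then extracting strong gradient convergence from the Minty equality without any uniform convexity beyond what the Leray--Lions assumptions guarantee; all of this is, however, essentially contained in~\cite{gdm} and can be cited, so in practice the proof reduces to checking that the hypotheses \rprop{GD1}--\rprop{GD4} of that reference are met, which is the content of the present paper's main results.
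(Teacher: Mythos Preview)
Your proposal is correct and matches the paper's approach exactly: the paper's entire proof is the one-line citation ``This is a special case of~\cite[Theorem~2.45]{gdm}'', and what you have written is precisely an outline of that theorem's proof (existence by a degree argument, uniform a priori bound from coercivity and \rprop{GD1}, weak/strong compactness via \rprop{GD4}, identification of the limit gradient via \rprop{GD3}, and the Minty trick using \rprop{GD2} to pass to the limit and upgrade to strong convergence). As you yourself note at the end, in practice one simply invokes the cited result once \rprop{GD1}--\rprop{GD4} are verified, which is exactly what the paper does.
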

\begin{proof}
  This is a special case of~\cite[Theorem~2.45]{gdm}.
\end{proof}

\begin{theorem}[Error estimates]\label{thm:error.est.example}
Let $\GD$ be a gradient discretisation, and let $\vec{\sigma}$ be the Leray--Lions operator
corresponding to the $p$-Laplace equation (see \eqref{eq:flux.plap}). 
Then there exists a unique $u_{\GD}$ solution to \eqref{eq:plap:gs} and, if $u$ is the solution to \eqref{eq:plap:weak},
then there exists $C$ depending only on $p$, $f$ and an upper bound of $\CD$ such that
\begin{itemize}
\item If $1<p\le 2$,
\[
\norm[L^p(\Omega)]{u-\PiD u_{\GD}}+\norm[L^p(\Omega)]{\GRAD u-\grD u_{\GD}}
\le C \left[\SD(u)+\SD(u)^{p-1}+\WD(\vec{\sigma}(\GRAD u))\right].
\]
\item If $2\le p$,
\[
\norm[L^p(\Omega)]{u-\PiD u_{\GD}}+\norm[L^p(\Omega)]{\GRAD u-\grD u_{\GD}}
\le C \left[\SD(u)+\SD(u)^{\frac{1}{p-1}}+\WD(\vec{\sigma}(\GRAD u))^{\frac{1}{p-1}}\right].
\]
\end{itemize}
\end{theorem}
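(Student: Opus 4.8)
The plan is to treat the $p$-Laplace case as a concrete instance of the abstract error estimates available in the GDM literature (this is essentially \cite[Theorem 2.40 and Proposition 2.41]{gdm}, specialised to the flux \eqref{eq:flux.plap}), so the proof will mostly be a matter of verifying that the structural constants appearing in those abstract estimates depend only on $p$, $f$ and an upper bound of $\CD$. First I would recall the standard monotonicity and Hölder-continuity inequalities for the map $\vec{a}\mapsto|\vec{a}|^{p-2}\vec{a}$ on $\Real^d$: there are constants $c_p,C_p>0$ such that, for all $\vec{a},\vec{b}\in\Real^d$,
\[
(|\vec{a}|^{p-2}\vec{a}-|\vec{b}|^{p-2}\vec{b})\SCAL(\vec{a}-\vec{b}) \ge
\begin{cases}
c_p\,\dfrac{|\vec{a}-\vec{b}|^2}{(|\vec{a}|+|\vec{b}|)^{2-p}} & \text{if }1<p\le 2,\\[2mm]
c_p\,|\vec{a}-\vec{b}|^p & \text{if }p\ge 2,
\end{cases}
\]
together with the dual estimate $\bigl||\vec{a}|^{p-2}\vec{a}-|\vec{b}|^{p-2}\vec{b}\bigr|\le C_p|\vec{a}-\vec{b}|^{p-1}(|\vec{a}|+|\vec{b}|)^{?}$ in the two regimes. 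Uniqueness of $u_{\GD}$ (and of $u$) follows from strict monotonicity of $\vec\sigma$, hence from the strict positivity above; existence of $u_{\GD}$ is the Brouwer-degree / Minty argument applied to the finite-dimensional monotone coercive map, coercivity being guaranteed because $\norm[L^p(\Omega)^d]{\grD\SCAL}$ is a norm on $\XDz$ and $\CD<\infty$.

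Next I would set up the error decomposition. Let $v\in\XDz$ be arbitrary and write $w_{\GD}\coloneq u_{\GD}-v$. Subtracting the scheme \eqref{eq:plap:gs} from a suitably tested version of the weak formulation \eqref{eq:plap:weak}, and using the definition of $\WD$ to absorb the conformity defect of $\vec\sigma(\GRAD u)$, one obtains a bound of the form
\[
\int_\Omega\bigl(\vec\sigma(\vec x,\PiD u_{\GD},\grD u_{\GD})-\vec\sigma(\vec x,\PiD u_{\GD},\grD v)\bigr)\SCAL\grD w_{\GD}
\le \bigl(\WD(\vec\sigma(\GRAD u))+\norm[L^{p'}]{\vec\sigma(\GRAD u)-\vec\sigma(\grD v)}\bigr)\norm[L^p]{\grD w_{\GD}}.
\]
Here, because $\vec\sigma$ for the $p$-Laplacian does not depend on the zeroth-order argument, the middle slot plays no role, which simplifies matters; in the general Leray--Lions case one would also pay a term controlled by $\CD\SD(u)$ coming from the Lipschitz dependence of $\vec\sigma$ on $u$ and the Poincaré inequality. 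Applying the coercivity (monotonicity) inequality to the left-hand side, then Hölder on the right-hand side — in the regime $1<p\le 2$ one has to insert the weight $(|\grD u_{\GD}|+|\grD v|)^{2-p}$ and redistribute it with Hölder's inequality using the a priori bound $\norm[L^p]{\grD u_{\GD}}\le \CD\norm[L^{p'}]{f}$ — yields an estimate for $\norm[L^p]{\grD w_{\GD}}$ in terms of $\norm[L^{p'}]{\vec\sigma(\GRAD u)-\vec\sigma(\grD v)}$ and $\WD(\vec\sigma(\GRAD u))$, raised to the powers $1$ and $1/(p-1)$ (resp. $1$ and $p-1$) according to the regime.

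Then I would convert $\norm[L^{p'}]{\vec\sigma(\GRAD u)-\vec\sigma(\grD v)}$ into $\SD(u)$-type quantities using the Hölder continuity of $\vec a\mapsto|\vec a|^{p-2}\vec a$: for $p\ge 2$ this reads $\norm[L^{p'}]{\vec\sigma(\GRAD u)-\vec\sigma(\grD v)}\lesssim \norm[L^p]{\GRAD u-\grD v}^{p-1}(1+\norm[L^p]{\GRAD u}+\norm[L^p]{\grD v})^{p-2}$, producing the exponent $p-1$; for $1<p\le 2$ the map is $(p-1)$-Hölder and one gets the exponent $1$ on one side and $p-1$ on the other after the earlier rearrangement. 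Finally, minimising over $v\in\XDz$ and using $\norm[L^p]{\PiD v-\phi}$ as the companion term, one recognises $\SD(u)$ in the definition \eqref{eq:SD}, and a triangle inequality $\norm[L^p]{\GRAD u-\grD u_{\GD}}\le\norm[L^p]{\GRAD u-\grD v}+\norm[L^p]{\grD w_{\GD}}$ plus the Poincaré control of $\norm[L^p]{\PiD u-\PiD u_{\GD}}\le\CD\norm[L^p]{\grD w_{\GD}}$ assembles the two displayed inequalities, with $C$ depending only on $c_p,C_p$, $\norm[L^{p'}]{f}$ and the bound on $\CD$. The main obstacle is the case $1<p\le 2$: the degenerate weight $(|\grD u_{\GD}|+|\grD v|)^{2-p}$ must be tracked carefully through two successive applications of Hölder's inequality, and it is exactly this bookkeeping that produces the non-trivial exponents $p-1$ and $\tfrac1{p-1}$ and the extra summand $\SD(u)^{p-1}$ (resp. $\SD(u)^{1/(p-1)}$) in the statement; getting the powers to match the claim, rather than something weaker, is where the care is needed.
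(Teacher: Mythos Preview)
Your proposal is correct in its broad strokes and is precisely the argument the paper defers to: the paper's own proof is a one-line citation to \cite[Theorem~2.39]{gdm}, and what you have sketched is essentially that result specialised to the $p$-Laplacian. A couple of minor points of precision: the a priori bound comes out as $\norm[L^p(\Omega)^d]{\grD u_{\GD}}\le (\CD\norm[L^{p'}(\Omega)]{f})^{1/(p-1)}$ rather than the linear estimate you wrote, and the H\"older-continuity exponent you left as a question mark is $0$ for $1<p\le 2$ (the map is $(p-1)$-H\"older without weight) and $p-2$ for $p\ge 2$; these do not affect the structure of the argument.
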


\begin{proof} These error estimates are simplified forms of the ones in
\cite[Theorem~2.39]{gdm}.
\end{proof}

\section{Discontinuous Skeletal Gradient Discretisations}\label{sec:DSGD}

In this section, we construct a family of Discontinuous Skeletal Gradient Discretisations (DSGD).
The notation is closely inspired by HHO methods; see, e.g.,~\cite{Di-Pietro.Tittarelli:17}.

\subsection{Local polynomial spaces and projectors}

Local polynomial spaces on mesh elements or faces and projectors thereon play a crucial role in the design and analysis of DSGD methods.

For any $X\subset\overline{\Omega}$, we denote by $(\cdot,\cdot)_X$ the standard $L^2(X)$- or $L^2(X)^d$-products.
This notation is used in place of integrals when dealing with quantities that are inherently $L^2$-based.
Let now $X$ be a mesh element or face.
For an integer $\ell\ge 0$, $\Poly{\ell}(X)$ denotes the space spanned by the restriction to $X$ of scalar-valued, $d$-variate (if $X$ is a mesh element) or $(d-1)$-variate (if $X$ is a face) polynomials of total degree $\ell$ or less, and conventionally set $\Poly{-1}(X)\coloneq\{0\}$.

Let again $X$ denote a mesh element or face.
The $L^2$-orthogonal projector $\lproj[X]{\ell}:L^1(X)\to\Poly{\ell}(X)$ is defined as follows:
For all $v\in L^1(X)$, $\lproj[X]{\ell}$ is the unique polynomial in $\Poly{\ell}(X)$ such that
\begin{equation}\label{eq:lproj}
  (\lproj[X]{\ell}v-v,w)_X=0\qquad\forall w\in\Poly{\ell}(X).
\end{equation}
In the vector case, the $L^2$-projector is defined component-wise and denoted by $\vlproj[X]{\ell}$.

For any mesh element $T\in\Th$, we also define the elliptic projector $\eproj{\ell}:W^{1,1}(T)\to\Poly{\ell}(T)$ as follows:
For all $v\in W^{1,1}(T)$, $\eproj{\ell}v$ is the unique polynomial in $\Poly{\ell}(T)$ that satisfies
\begin{equation*}\label{eq:eproj}
  \text{$(\GRAD(\eproj{\ell}v-v),\GRAD w)_T=0$ for all $w\in\Poly{\ell}(T)$ and $(\eproj{\ell}v-v,1)_T=0$.}
\end{equation*}

On regular polytopal mesh sequences, both $\lproj[T]{\ell}$ and $\eproj[T]{\ell}$ have optimal approximation properties in $\Poly{\ell}(T)$ (see Theorem~1.1, Theorem~1.2, and Lemma~3.1 in~\cite{Di-Pietro.Droniou:16*1}):
For any $\alpha\in\{0,1\}$ and $s\in\{\alpha,\ldots,\ell+1\}$, there exists a real number $C>0$ independent of $h$, but possibly depending only on $d$, $p$, $\varrho$, $\ell$, $\alpha$, and $s$, such that, for all $T\in\Th$, and all $v\in W^{s,p}(T)$,
\begin{subequations}\label{eq:approx.approx.trace}
  \begin{equation}\label{eq:approx}
    \seminorm[W^{r,p}(T)]{v - \pi_T^{\alpha,\ell} v }
    \le 
    C h_T^{s-r} 
    \seminorm[W^{s,p}(T)]{v}
    \qquad \forall r \in \{0,\ldots,s\},
  \end{equation}
  and, if $s\ge 1$,
  \begin{equation}\label{eq:approx.trace}
    h_T^{\frac1p}\seminorm[{W^{r,p}(\Fh[T])}]{v - \pi_T^{\alpha,\ell} v}
    \le 
    C h_T^{s-r} 
    \seminorm[W^{s,p}(T)]{v}
    \qquad \forall r \in \{0,\ldots,s-1\},
  \end{equation}
  where $W^{r,p}(\Fh[T])\coloneq\left\{ v\in L^p(\partial T)\st\text{$v_{|F}\in W^{r,p}(F)$ for all $F\in\Fh[T]$}\right\}$ and $h_T$ denotes the diameter of the element $T$.
\end{subequations}

\subsection{Computing gradient projections from projections of scalar functions}

We continue our discussion with a crucial remark concerning the computation of the $L^2$-orthogonal projection of the gradient from $L^2$-orthogonal projections of a scalar function and its traces.
This remark will inspire the choice of the discrete unknowns as well as the definition of the gradient reconstruction in DSGD methods.
In what follows, we work on a fixed mesh element $T\in\Th$, denote by $\Fh[T]$ the set of mesh faces that lie on the boundary of $T$ and, for all $F\in\Fh[T]$, by $\normal_{TF}$ the normal vector to $F$ pointing out of $T$.

Consider a function $v\in W^{1,1}(T)$.
We note the following integration by parts formula, valid for all $\vec{\phi}\in C^\infty(\overline{T})^d$:
\begin{equation}\label{eq:ipp}
  (\GRAD v,\vec{\phi})_T = -(v, \DIV\vec{\phi})_T + \sum_{F\in\Fh[T]}(v, \vec{\phi}\SCAL\normal_{TF})_F.
\end{equation}
Let now an integer $k\ge 0$ be fixed. Specialising~\eqref{eq:ipp} to $\vec{\phi}\in\Poly{k}(T)^d$, we obtain
\begin{equation}\label{eq:ipp.poly}
  (\vlproj[T]{k}\GRAD v,\vec{\phi})_T
  = -(\lproj[T]{k-1}v, \DIV\vec{\phi})_T
  + \sum_{F\in\Fh[T]}(\lproj[F]{k} v, \vec{\phi}\SCAL\normal_{TF})_F,
\end{equation}
where we have used~\eqref{eq:lproj} to insert $\vlproj[T]{k}$ into the left-hand side, and $\lproj[T]{k-1}$ and $\lproj[F]{k}$ into the right-hand side after  observing that $\DIV\vec{\phi}\in\Poly{k-1}(T)$ and, since we are considering planar faces, $\vec{\phi}_{|F}\SCAL\normal_{TF}\in\Poly{k}(F)$ for all $F\in\Fh[T]$.
The relation~\eqref{eq:ipp.poly} shows that computing the $L^2$-orthogonal projection of $\GRAD v$ on $\Poly{k}(T)^d$ \emph{does not require a full knowledge of the function $v$}.
All that is required is
\begin{enumerate}[(i)]
\item $\lproj[T]{k-1}v$, the $L^2$-projection of $v$ on $\Poly{k-1}(T)$.
  Other possible choices are $\lproj[T]{k}v$ or $\lproj[T]{k+1}v$ (in fact, any polynomial degree larger than or equal to $k-1$ will do);
\item for all $F\in\Fh[T]$, $\lproj[F]{k}v$, the $L^2$-projection on $\Poly{k}(F)$ of the trace of $v$ on $F$.
\end{enumerate}

\subsection{Space of discrete unknowns and reconstruction of scalar functions}

Inspired by the previous remark, for two given integers $k\ge 0$ and $l\in\{k-1,k,k+1\}$ we consider the following set of discrete unknowns:
$$
\Uh \coloneq \left(
\bigtimes_{T\in\Th}\Poly{l}(T)
\right)\times\left(
\bigtimes_{F\in\Fh}\Poly{k}(F)
\right).
$$
The choice $l=k-1$ can be traced back to the ncMFD of~\cite{Lipnikov-Manzini:2014}, the choice $l=k$ to the Hybrid High-Order method of~\cite{Di-Pietro.Ern.ea:14}, and the choice $l=k+1$ to the Hybridizable Discontinuous Galerkin method of~\cite[Remark~1.2.4]{Lehrenfeld:10}.
Notice that, for $k=0$ and $l=k-1$, element-based unknowns are not present.

For a generic element of $\Uh$, we use the standard HHO underlined notation 
$$\uv=\left((v_T)_{T\in\Th},(v_F)_{F\in\Fh}\right),$$
and we define the interpolator $\Ih:W^{1,1}(\Omega)\to\Uh$ such that, for all $v\in W^{1,1}(\Omega)$,
$$
\Ih v\coloneq \left(
(\lproj[T]{l}v)_{T\in\Th}, (\lproj[F]{k} v_{|F})_{F\in\Fh}
\right).
$$

To account for Dirichlet boundary conditions strongly, we introduce the subspace
$$
\Uhz\coloneq\left\{
\uv\in\Uh\st v_F\equiv 0\mbox{ for all } F\in\Fhb
\right\},
$$
where $\Fhb$ is the set collecting the mesh faces that lie on the boundary of $\Omega$.

The restrictions of $\Uh$, $\Ih$ and $\uv\in\Uh$ to a generic mesh element $T\in\Th$ are denoted by $\UT$, $\IT$, and $\uv[T]$, respectively. That is,
$$
\UT\coloneq\{\uv[T]=(v_T,(v_F)_{F\in\Fh[T]})\,:\,v_T\in\Poly{l}(T)\,,\;v_F\in \Poly{k}(F)\quad\forall F\in\Fh[T]\}
$$
and, for all $v\in W^{1,1}(T)$,
$$
\IT v\coloneq(\lproj[T]{l}v,(\lproj[F]{k} v_{|F})_{F\in\Fh[T]}).
$$
Moreover, we adopt the convention that, for all $T\in\Th$,
\begin{equation}\label{eq:vT.l<0}
  \text{$v_T\coloneq\sum_{F\in\Fh[T]}\omega_{TF} v_F$ if $l<0$},
\end{equation}
where, following~\cite[Appendix A]{Lipnikov-Manzini:2014}, the weights $\{\omega_{TF}\}_{F\in\Fh[T]}$ are defined in such a way that $\sum_{F\in\Fh[T]}\omega_{TF}(q,1)_F = (q,1)_T$ for all $q\in\Poly{1}(T)$ (this condition is required in the above reference to obtain $L^2$-superconvergence, not treated in this work).
For all $\uv\in\Uh$, we also define the broken polynomial field $v_h$ such that
\begin{equation}\label{eq:vh}
  {v_h}_{|T}\coloneq v_T\qquad\forall T\in\Th.
\end{equation}

The space of discrete unknowns and the reconstruction of the scalar variable for a DSDG are given by, respectively,
\begin{equation}\label{eq:DSGDM:XDz.PiD}  
  \XDz[h] \coloneq \Uhz\quad\text{ and }\quad \PiD[h]\uv \coloneq v_h \text{ for all }\uv\in\Uhz.
\end{equation}

\subsection{Reconstruction of the gradient}

To complete the definition of a DSGD, it remains to identify a reconstruction of the gradient, which makes the object of this section.

\subsubsection{A consistent and limit-conforming high-order gradient}

Let a mesh element $T\in\Th$ be fixed.
Taking inspiration from the integration by parts formula~\eqref{eq:ipp.poly}, we define the gradient reconstruction $\GT:\UT\to \Poly{k}(T)^d$ such that, for any $\uv[T]=(v_T,(v_F)_{F\in\Fh[T]})\in\UT$, $\GT\uv[T]$ satisfies, for all $\bphi\in \Poly{k}(T)^d$,
\begin{equation}
  \label{eq:GT}
  (\GT \uv[T],\bphi)_T
  = -(v_T,\DIV\bphi)_T
  + \sum_{F\in\Fh[T]} (v_F, \bphi\SCAL\normal_{TF})_F.
\end{equation}
By construction, it holds for all $v\in W^{1,1}(T)$,
\begin{equation}\label{GT:commutativity}
\GT \IT v = \vlproj[T]{k}(\GRAD v).
\end{equation}
Recalling the estimates \eqref{eq:approx.approx.trace} on $\vlproj[T]{k}$, this implies that $\GT\IT v$ optimally approximates $\GRAD v$ in $\Poly{k}(T)^d$ when $v$ is smooth enough.

A reconstruction of the gradient that meets the consistency requirement expressed by \rprop{GD2} can be obtained at this point letting $\grD[h]$ be such that, for all $\uv\in\Uhz$,
\begin{equation}\label{eq:grD.nonstable}
  \grD[h]\uv=\Gh\uv[h],
\end{equation}
where $\Gh:\Uh\to\Poly{k}(\Th)^d$ is the global consistent gradient reconstruction operator obtained patching the local reconstructions:
For all $\uv\in\Uh$,
\begin{equation}\label{eq:Gh}
  (\Gh\uv)_{|T}\coloneq\GT\uv[T]\qquad\forall T\in\Th.
\end{equation}
However, for general element shapes, the $L^p$-norm of this gradient reconstruction is not a norm on the space $\XDz[h]=\Uhz$, hence the coercivity requirement expressed by \rprop{GD1} cannot be met.
This initial choice of reconstructed gradient therefore has to be stabilised by accounting
for jumps between element and face unknowns.
These jumps can be controlled in turn via a discrete counterpart of the $W^{1,p}$-seminorm,
which gives us an emulated Sobolev structure on $\Uh$.

\begin{remark}[Non-conforming $\Poly{1}$ finite elements]

  If $T$ is a $d$-simplex (i.e., a triangle if $d=2$, a tetrahedron if $d=3$, etc.) and we take $k=0$ and $l=-1$, the gradient reconstruction $\GT\uv[T]$ defined by~\eqref{eq:GT} coincides with the gradient of the non-conforming $\Poly{1}$ function $\varphi$ such that $|F|^{-1}\int_F\varphi=v_F$ for all $F\in\Fh[T]$.
  In this case, the $L^p$-norm of the global gradient given by~\eqref{eq:grD.nonstable} defines a norm on the space of discrete unknowns, and stabilisation is not needed.
  To recover the usual non-conforming $\Poly{1}$ scheme (often called
the Crouzeix--Raviart scheme, although historically this name refers to the usage
of non-conforming $\Poly{1}$--$\Poly{0}$ discretisations for the velocity--pressure
unknowns in Stokes and Navier--Stokes equations~\cite{Crouzeix.Raviart:77}),~\eqref{eq:DSGDM:XDz.PiD} has to be modified setting ${\PiD[h]\uv}_{|T}\coloneq\rT[1]\uv[T]$ for all $T\in\Th$ and all $\uv[T]\in\UT[0,-1]$, where $\rT[1]$ is the high-order reconstruction of scalar function defined in the following section.
\end{remark}

\subsubsection{High-order reconstruction of scalar functions and discrete $W^{1,p}$-seminorm}

Let $\uv[T]\in\UT$.
Recalling the convention~\eqref{eq:vT.l<0}, $v_T$ defines a reconstruction of scalar functions inside $T$ of degree $\max(0,l)$.
However, taking again inspiration from the integration by parts formula \eqref{eq:ipp.poly}, this time specialised to $\vec{\phi}=\GRAD w$ with $w\in\Poly{k+1}(T)$, one can define a higher-order reconstruction $\rT:\UT\to\Poly{k+1}(T)$ such that, for all $\uv[T]\in\UT$, $\rT\uv[T]$ satisfies, for all $w\in\Poly{k+1}(T)$,
\begin{subequations}\label{eq:rT}
  \begin{align}
    \label{eq:rT.pde}
    (\GRAD\rT\uv[T],\GRAD w)_T &= -(v_T,\LAPL w)_T + \sum_{F\in\Fh[T]} (v_F,\GRAD w\SCAL\normal_{TF})_F.
  \end{align}
  Equation~\eqref{eq:rT.pde} defines $\rT\uv[T]$ up to an additive constant, which we fix by imposing
  \begin{equation}\label{eq:rT.average}
  (\rT\uv[T]-v_T,1)_T=0.
  \end{equation}
\end{subequations}
\begin{remark}[Optimal approximation properties of $\rT\circ\IT$]\label{rem:approx:rT.IT}
When $l\ge 0$, following the reasoning of~\cite[Lemma~3]{Di-Pietro.Ern.ea:14}, it can be proved that $\rT\circ\IT=\eproj{k+1}$, and optimal approximation properties in $\Poly{k+1}(T)$ follow from~\eqref{eq:approx.approx.trace} with $\alpha=1$ and $\ell=k+1$.
The case $l<0$, on the other hand, can only occur when $k=0$.
Owing to the specific choice for the reconstruction of a (constant) element value in~\eqref{eq:vT.l<0}, optimal approximation properties analogous to~\eqref{eq:approx.approx.trace} with $\alpha=1$ and $\ell=1$ can be proved also in this case.
\end{remark}

To define the discrete Sobolev seminorm on $\Uh$, for all $T\in\Th$ we introduce the difference operators $\dT:\UT\to\Poly{l}(T)$ and, for all $F\in\Fh[T]$, $\dTF:\UT\to\Poly{k}(F)$ such that, for all $\uv[T]\in\UT$,
\begin{equation}\label{eq:dT.dTF}
  \dT\uv[T]\coloneq\lproj[T]{l}(\rT\uv[T]-v_T),\qquad
  \dTF\uv[T]\coloneq\lproj[F]{k}(\rT\uv[T]-v_F)\quad\forall F\in\Fh[T].
\end{equation}
The role of these difference operators in the context of HHO methods has been highlighted in~\cite[Section~3.1.4]{Di-Pietro.Tittarelli:17}.
We also note here the following relation:
\begin{equation}\label{eq:dT.dTF'}
  (\dT\uv[T],(\dTF\uv[T])_{F\in\Fh[T]})=\IT\rT\uv[T]-\uv[T],
\end{equation}
which will be exploited in Section~\ref{sec:generality.ST} and Lemma \ref{lem:norm.interp} below.

The discrete $W^{1,p}$-seminorm is defined setting
\begin{equation*}\label{eq:norm1p}
  \norm[1,p,h]{\uv}^p\coloneq\sum_{T\in\Th}\norm[1,p,T]{\uv[T]}^p,
\end{equation*}
where, for all $T\in\Th$, the local seminorm is such that, denoting by $h_F$ the diameter of the face $F$,
\begin{equation}\label{eq:norm1p.T}
  \norm[1,p,T]{\uv[T]}^p \coloneq \norm[L^p(T)^d]{\GT\uv[T]}^p + \seminorm[p,\partial T]{\uv[T]}^p,\qquad
  \seminorm[p,\partial T]{\uv[T]}^p \coloneq \sum_{F\in\Fh[T]}h_F^{1-p}\norm[L^p(F)]{(\dTF-\dT)\uv[T]}^p.
\end{equation}
As a result of Proposition~\ref{prop:norm.equiv} below, $\norm[1,p,h]{{\cdot}}$ is a norm on the subspace $\Uhz$.

\subsubsection{A stabilised reconstruction of the gradient}

We can now describe the general form of the gradient $\grD[h]:\Uh\to L^2(\Omega)^d$, built inside each mesh element from
the consistent and limit-conforming part $\GT$ and a stabilising contribution:
\begin{equation}\label{eq:DSGDM:grD}
(\grD[h]\uv)_{|T}=\grT\uv[T]\coloneq\GT\uv[T] + \ST\uv[T]\qquad\forall\uv\in\Uh\,,\;\forall T\in\Th,
\end{equation}
where $\ST:\UT\to L^2(T)^d$ satisfies the following design conditions:
\begin{enumerate}
\item[\lprop{S1}] \emph{$L^2$-stability and boundedness.}
  For all $T\in\Th$ and all $\uv[T]\in\UT$, it holds that
  \begin{equation}\label{eq:ST:stability}
    \norm[L^2(T)^d]{\ST\uv[T]}\simeq\seminorm[2,\partial T]{\uv[T]},
  \end{equation}
  where $a\simeq b$ means $C a\le b\le C^{-1} a$ with real number $C>0$ independent of $h$ and of $T$, but possibly depending on $d$ and on discretisation parameters including $\varrho$, $k$, and $l$.
\item[\lprop{S2}] \emph{Orthogonality.} For all $\uv[T]\in\UT$ and all $\bphi\in\Poly{k}(T)^d$, it holds
  \begin{equation}\label{eq:ST:orthogonality}
    (\ST\uv[T],\bphi)_T= 0.
  \end{equation}
\item[\lprop{S3}] \emph{Image.} If $p\neq 2$, there exists $k_{\rm S}\in\Natural$ independent of $h$ and of $T$ such that the image of $\ST$ is contained in $\Poly{k_{\rm S}}(\mathcal{P}_T)^d$, the space of vector-valued broken polynomials of total degree up to $k_{\rm S}$ on a regular polytopal partition $\mathcal{P}_T$ of $T$.
  Here, regular means that, for all $P\in\mathcal{P}_T$, denoting by $r_P$ and $h_P$ the inradius and diameter of $P$, respectively, it holds that
  \begin{equation}\label{eq:S3.cond}
    \varrho h_P\le r_P,\qquad\varrho h_T\le h_P.
  \end{equation}
\end{enumerate}
\begin{remark}[$L^2$-based stabilising contribution]
  Property \rprop{S2}, which is crucial to ensure the stabilising properties of $\ST$, requires to work with an inner product space.
  In our case, a natural choice is $L^2(T)^d$.
  The role of orthogonality properties analogous to \rprop{S2} has been previously recognised
  in the context of specific stabilised method, see for example~\cite[Proposition 7]{Bonelle.Di-Pietro.ea:15}
  for the lowest-order Compatible Discretisation Operator methods,
  \cite[Theorems 13.7 and 14.5]{gdm} for the Hybrid Mimetic Mixed methods and the nodal Mimetic Mixed Methods,
  and \cite[Section 4.2]{DL14} for numerical methods for elasticity models.
\end{remark}
\begin{remark}[$L^p$-stability of $\ST$]
    Property \rprop{S3} is required to extend the stability properties expressed by \rprop{S1} to $L^p$; see the proof of point (i) in Proposition~\ref{prop:ST} for further details.
\end{remark}

The above construction of a DSGD is summarised in the following
\begin{definition}[Discontinuous Skeletal Gradient Discretisation]\label{def:DSGD}
Given a polytopal mesh $\Mh$, a Discontinuous Skeletal Gradient Discretisation (DSGD) is given
by $\GD[h]=(\XDz[h],\PiD[h],\grD[h])$ where $\XDz[h]$ and $\PiD[h]$ are defined by
\eqref{eq:DSGDM:XDz.PiD}, and $\grD[h]$ is given by \eqref{eq:DSGDM:grD} with a family of stabilisations $\{\ST\st T\in\Mh\}$
satisfying properties \rprop{S1}--\rprop{S3}.
\end{definition}

\subsection{Main results}\label{sec:main.results}

The construction detailed above yields a GD that meets properties \rprop{GD1}--\rprop{GD4} identified in Section~\ref{sec:GDM}, as summarised in the following

\begin{theorem}[Properties of DSGD]\label{thm:DSGDM}
  If $(\Mh)_{h\in\mathcal{H}}$ is a regular sequence of polytopal meshes, then the sequence of the corresponding DSGDs $(\GD[h])_{h\in\mathcal H}$
  given by Definition \ref{def:DSGD} satisfies properties \rprop{GD1}--\rprop{GD4}.
\end{theorem}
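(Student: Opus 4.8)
The plan is to verify \rprop{GD1}--\rprop{GD4} one after the other, using as the main workhorse the norm equivalence $\norm[L^p(\Omega)^d]{\grD[h]\uv}\simeq\norm[1,p,h]{\uv}$ on $\Uhz$ (Proposition~\ref{prop:norm.equiv}), whose proof is where \rprop{S1}--\rprop{S3} come into play: \rprop{S2} makes $\ST\uv[T]$ orthogonal in $L^2(T)^d$ to $\GT\uv[T]\in\Poly{k}(T)^d$, so that $\norm[L^2(T)^d]{\grT\uv[T]}^2=\norm[L^2(T)^d]{\GT\uv[T]}^2+\norm[L^2(T)^d]{\ST\uv[T]}^2$; combined with \rprop{S1} this gives $\norm[L^2(T)^d]{\grT\uv[T]}\simeq(\norm[L^2(T)^d]{\GT\uv[T]}^2+\seminorm[2,\partial T]{\uv[T]}^2)^{1/2}$, and when $p\neq2$ one transfers this to $L^p$ cell-by-cell on the partition $\mathcal{P}_T$ through inverse inequalities whose constants are $\varrho$-uniform thanks to~\eqref{eq:S3.cond} (part of Proposition~\ref{prop:ST}). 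Granted the equivalence, \rprop{GD1} reduces to a discrete Poincar\'e inequality $\norm[L^p(\Omega)]{v_h}\le C_{\rm P}\norm[1,p,h]{\uv}$ on $\Uhz$: rewriting $(\dTF-\dT)\uv[T]$ via~\eqref{eq:dT.dTF'} shows that the face--element jumps are controlled by $\seminorm[p,\partial T]{\uv[T]}$, so $v_h$ behaves as a $W^{1,p}_0$ function up to jump terms bounded by $\norm[1,p,h]{\uv}$, and one concludes by the discrete functional analysis of polytopal methods. Alternatively, \rprop{GD1} follows automatically from \rprop{GD3} or \rprop{GD4} by the Remark after \rprop{GD4}.

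\textbf{Consistency \rprop{GD2}.} By density of $C^\infty_c(\Omega)$ in $W^{1,p}_0(\Omega)$ it suffices to bound $\SD[h](\phi)$ for smooth $\phi$; take $\uv=\Ih\phi\in\Uhz$. The scalar error $\norm[L^p(\Omega)]{v_h-\phi}$ is $O(h)$ by~\eqref{eq:approx}. For the gradient error, \eqref{GT:commutativity} gives $\grT\IT\phi=\vlproj[T]{k}\GRAD\phi+\ST\IT\phi$; the first term is $O(h_T)$-close to $\GRAD\phi$ by~\eqref{eq:approx}, while $\norm[L^p(T)^d]{\ST\IT\phi}$ is controlled by $\seminorm[p,\partial T]{\IT\phi}$ via \rprop{S1} extended to $L^p$ by \rprop{S3} (Proposition~\ref{prop:ST}), and $\seminorm[p,\partial T]{\IT\phi}=O(h_T)$ by~\eqref{eq:dT.dTF'} --- which identifies $(\dT\IT\phi,(\dTF\IT\phi)_{F\in\Fh[T]})$ with $\IT(\eproj{k+1}\phi-\phi)$ when $l\ge0$, cf. Remark~\ref{rem:approx:rT.IT} (and admits an analogue when $l<0$) --- together with the trace estimate~\eqref{eq:approx.trace}. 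Summing over $T\in\Th$ gives $\SD[h](\phi)=O(h)\to0$, and the density argument extends this to all $\phi\in W^{1,p}_0(\Omega)$.

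\textbf{Limit-conformity \rprop{GD3}.} By density of $C^\infty(\overline{\Omega})^d$ in $\vec{W}^{p'}(\oDIV;\Omega)$ and the now-available uniform bound on $\CD[h]$, it suffices to treat smooth $\vec\psi$. Expand the quantity defining $\WD[h](\vec\psi)$ element-by-element and split $\grD[h]\uv=\GT\uv[T]+\ST\uv[T]$. In the $\GT$-contribution, inserting $\bphi=\vlproj[T]{k}\vec\psi$ in~\eqref{eq:GT} and comparing with the exact integration by parts~\eqref{eq:ipp}, the face terms telescope --- using that the face unknowns are single-valued and vanish on $\partial\Omega$ --- leaving residuals that pair $\vec\psi-\vlproj[T]{k}\vec\psi$ against jump-type quantities bounded by $\seminorm[p,\partial T]{\uv[T]}$. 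In the $\ST$-contribution, \rprop{S2} gives $(\ST\uv[T],\vec\psi)_T=(\ST\uv[T],\vec\psi-\vlproj[T]{k}\vec\psi)_T$, bounded by $\norm[L^p(T)^d]{\ST\uv[T]}\,\norm[L^{p'}(T)^d]{\vec\psi-\vlproj[T]{k}\vec\psi}$ and hence, via \rprop{S1}--\rprop{S3} and~\eqref{eq:approx}, by $O(h_T)\,\seminorm[p,\partial T]{\uv[T]}$. H\"older's inequality, the estimates~\eqref{eq:approx.approx.trace}, and the norm equivalence then give $\WD[h](\vec\psi)=O(h)\to0$.

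\textbf{Compactness \rprop{GD4}, and main obstacle.} If $(\norm[L^p(\Omega)^d]{\grD[h]v_h})_{h\in\mathcal{H}}$ is bounded, so is $(\norm[1,p,h]{\uv[h]})_{h\in\mathcal{H}}$ by the norm equivalence; one then proves a discrete Rellich--Kondrachov theorem, controlling the $L^p(\Omega)$-norm of the translates of $v_h$ (extended by zero outside $\Omega$) by $\norm[1,p,h]{\uv[h]}$ times the translation length --- the face unknowns entering only through jumps $v_F-v_T$ controlled by $\seminorm[p,\partial T]{\uv[T]}$ --- so that the Kolmogorov--Riesz criterion yields relative compactness of $(\PiD[h]v_h)=(v_h)$ in $L^p(\Omega)$; alternatively this is an instance of the abstract compactness results for sequences of GDs in~\cite{gdm}. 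I expect the Hilbertian case $p=2$ to follow the lines of the existing HHO analysis without essential new difficulty; the genuine obstacle is the systematic transfer of the $L^2$-stability of $\ST$ (property \rprop{S1}) to an $L^p$ estimate, which rests on \rprop{S3} and on inverse inequalities on the auxiliary partitions $\mathcal{P}_T$ with constants depending on the mesh only through $\varrho$ via~\eqref{eq:S3.cond}, together with the careful bookkeeping of face and volume residual terms needed to obtain the \emph{optimal} $O(h)$ scaling --- rather than mere convergence --- in \rprop{GD2} and \rprop{GD3}.
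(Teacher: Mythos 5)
Your proposal is correct in outline, but it establishes \rprop{GD1}, \rprop{GD3} and \rprop{GD4} by a genuinely different route from the paper. The paper does not verify these three properties one by one: it invokes the polytopal-toolbox result of \cite[Corollary 7.12]{gdm}, constructing a control $\control:\Uhz\to X_{\Th,0}=\Uhz[0,0]$ by projecting all unknowns onto constants ($v_T^0=\lproj[T]{0}v_T$, $v_F^0=\lproj[F]{0}v_F$) and checking only three local estimates: the piecewise-constant skeletal seminorm of $\control(\uv)$ is bounded by $\norm[L^p(\Omega)^d]{\grD[h]\uv}$ (via the norm equivalence of Proposition~\ref{prop:norm.equiv}), the difference $\PiD[h]\uv-\Pi_{\Th}\control(\uv)$ is $\mathcal O(h)$, and---the structurally decisive point, resting precisely on \rprop{S2}---the mean value of $\grT\uv[T]$ coincides with the lowest-order two-point gradient $\GRAD_{\Th}\control(\uv)$, so the gradient defect $\omega^{\GRAD}$ vanishes identically. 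Coercivity, limit-conformity and compactness then follow from the abstract theory, while GD-consistency is obtained, as you also do, from Proposition~\ref{prop:est.SD.WD} together with a density argument. Your route instead verifies each property directly: a discrete Poincar\'e inequality for the hybrid seminorm for \rprop{GD1}, a density argument plus an element-by-element $\mathcal O(h)$ bound on $\WD[h]$ (essentially re-deriving the paper's estimate \eqref{est:WD}) for \rprop{GD3}, and a translation/Kolmogorov--Riesz argument for \rprop{GD4}. This is viable, and it makes more visible where \rprop{S1}--\rprop{S3} intervene, but it requires importing the discrete Sobolev embedding and the discrete Rellich (translation) estimates for hybrid unknowns from the literature---exactly the material the control approach packages once and for all---and it duplicates inside \rprop{GD3} the computation that the paper isolates in Proposition~\ref{prop:est.SD.WD}. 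One small caution: your fallback ``\rprop{GD1} follows from \rprop{GD3} or \rprop{GD4}'' cannot be combined with your proof of \rprop{GD3}, whose density step already uses the uniform bound on $\CD[h]$; to avoid circularity the direct discrete-Poincar\'e argument for \rprop{GD1} must be kept as the primary one.
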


\begin{proof}
  See \ref{sec:proof:DSGDM}.
\end{proof}

Since we are dealing with arbitrary-order methods, given the error estimates in Theorem \ref{thm:error.est.example}, a relevant point consists in estimating the convergence rates of the quantities $\SD[h](\phi)$ (see \rprop{GD2}) and $\WD(\vec{\psi})$ (see \rprop{GD3}) when their arguments exhibit further regularity.
This makes the object of the following

\begin{proposition}[Estimates on $\SD$ and $\WD$]\label{prop:est.SD.WD}
  Let $\Mh$ be a polytopal mesh and $\GD[h]$ be a DSGD as in Definition \ref{def:DSGD}.
  Then, denoting by $a\lesssim b$ the inequality $a\le Cb$ with real number $C>0$ not depending on $h$, but possibly depending on $d$, $p$, $\varrho$, $k$, $l$, and $k_{\rm S}$, it holds with $l^+\coloneq\max(l,0)$,
    \begin{subequations}\label{est:terms.SD}
      \begin{align}\label{est:terms.SD:Lp}
        &\forall \phi\in W^{1,p}_0(\Omega)\cap W^{l^++1,p}(\Th),
        &\norm[L^p(\Omega)]{\PiD[h]\Ih\phi-\phi} &\lesssim        
        h^{l^++1}\seminorm[W^{l^++1,p}(\Th)]{\phi},
        \\
        \label{est:terms.SD:W1p}
        &\forall \phi\in W^{1,p}_0(\Omega)\cap W^{k+2,p}(\Th),
        &\norm[L^p(\Omega)^d]{\grD[h]\Ih\phi-\GRAD\phi} &\lesssim h^{k+1}\seminorm[W^{k+2,p}(\Th)]{\phi}.
      \end{align}
    \end{subequations}
    As a consequence,
  \begin{equation}\label{est:SD}
    \forall \phi\in W^{1,p}_0(\Omega)\cap W^{\min(k,l^+)+2,p}(\Th),\qquad
    \SD[h](\phi)\lesssim h^{\min(k,l^+)+1}\norm[W^{\min(k,l^+)+2,p}(\Th)]{\phi}.
  \end{equation}
  Moreover,
  \begin{equation}\label{est:WD}
    \forall \vec{\psi}\in \vec{W}^{p'}(\oDIV;\Omega)\cap W^{k+1,p'}(\Th)^d,\qquad
    \WD[h](\vec{\psi}) \lesssim h^{k+1}\norm[W^{k+1,p'}(\Th)^d]{\vec{\psi}}.
  \end{equation}
  Here, for an integer $s\ge 0$ and a real number $q\in[1,+\infty)$, $W^{s,q}(\Th)\coloneq\{v\in L^q(\Omega)\st v_{|T}\in W^{s,q}(T)\ \forall T\in\Th\}$
  is the broken space on $\Th$ constructed on $W^{s,q}$ and endowed with the norm
  \[
  \norm[W^{s,q}(\Th)]{v}\coloneq\left(\sum_{T\in\Th} \norm[W^{s,q}(T)]{v_{|T}}^q\right)^{\frac1q}.
  \]
\end{proposition}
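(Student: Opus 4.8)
The plan is to establish the three displayed estimates in turn, each time localising on a mesh element $T$ and invoking the approximation properties \eqref{eq:approx.approx.trace} of the $L^2$-orthogonal and elliptic projectors. For \eqref{est:terms.SD:Lp}, I would simply note that $(\PiD[h]\Ih\phi)_{|T} = \lproj[T]{l}\phi$ by the definitions \eqref{eq:DSGDM:XDz.PiD} and of $\Ih$ (when $l\ge 0$; when $l=-1$ one uses instead the convention \eqref{eq:vT.l<0} together with the optimal approximation of the weighted face averages, as recalled in Remark~\ref{rem:approx:rT.IT}). The bound then follows directly from \eqref{eq:approx} with $\alpha=0$, $\ell=l^+$, $r=0$, $s=l^++1$, raised to the power $p$ and summed over $T\in\Th$.

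For \eqref{est:terms.SD:W1p}, the key is the splitting $\grD[h]\Ih\phi - \GRAD\phi = (\GT\IT\phi - \GRAD\phi) + \ST\IT\phi$ on each $T$, coming from \eqref{eq:DSGDM:grD}. The first term is controlled using the commutation property \eqref{GT:commutativity}, which gives $\GT\IT\phi = \vlproj[T]{k}\GRAD\phi$, so that $\norm[L^p(T)^d]{\GT\IT\phi-\GRAD\phi} = \norm[L^p(T)^d]{\vlproj[T]{k}\GRAD\phi-\GRAD\phi} \lesssim h_T^{k+1}\seminorm[W^{k+1,p}(T)^d]{\GRAD\phi} = h_T^{k+1}\seminorm[W^{k+2,p}(T)]{\phi}$ by \eqref{eq:approx} applied componentwise to $\GRAD\phi$ with $\alpha=0$, $\ell=k$, $r=0$, $s=k+1$. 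For the stabilisation term I would use \rprop{S1} to write $\norm[L^2(T)^d]{\ST\IT\phi}\simeq\seminorm[2,\partial T]{\IT\phi}$, then, when $p\neq 2$, invoke \rprop{S3} together with an inverse/norm-equivalence estimate on the regular submesh $\mathcal P_T$ (as in the proof of Proposition~\ref{prop:ST}) to pass to the $L^p$ norm at the price of a factor $h_T^{d/p-d/2}$; using the definition \eqref{eq:norm1p.T} of $\seminorm[p,\partial T]{{\cdot}}$ together with \eqref{eq:dT.dTF} and the identity $\IT\rT\IT\phi - \IT\phi = \IT(\rT\IT\phi-\phi)$, the boundary seminorm of $\IT\phi$ is bounded in terms of $\norm[L^p(F)]{\lproj[F]{k}(\rT\IT\phi-\phi_{|F})}$ and $\norm[L^p(T)]{\lproj[T]{l}(\rT\IT\phi-\phi)}$, each of which is $\lesssim h_T^{k+1}$ by the optimal approximation of $\rT\IT\phi = \eproj[T]{k+1}\phi$ (Remark~\ref{rem:approx:rT.IT}) via \eqref{eq:approx} and \eqref{eq:approx.trace} with $\alpha=1$, $\ell=k+1$. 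Raising to the power $p$ and summing over $T$ yields \eqref{est:terms.SD:W1p}.

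The consequence \eqref{est:SD} follows at once: $\SD[h](\phi)\le \norm[L^p(\Omega)]{\PiD[h]\Ih\phi-\phi} + \norm[L^p(\Omega)^d]{\grD[h]\Ih\phi-\GRAD\phi}$ by choosing $v=\Ih\phi$ in the minimum defining $\SD[h]$, and the two terms are bounded by \eqref{est:terms.SD:Lp} and \eqref{est:terms.SD:W1p} respectively, the smaller exponent $\min(k,l^+)+1$ being the binding one. Finally, for \eqref{est:WD} I would fix $\uv\in\Uhz$ and insert $\vec\psi$ into the quantity inside the supremum defining $\WD[h]$. Element-wise integration by parts \eqref{eq:ipp} applied to $v_T$ — combined with the orthogonality properties \eqref{eq:lproj} that let me replace $\vec\psi$ by $\vlproj[T]{k}\vec\psi$ against $\GT\uv[T]$ (using the definition \eqref{eq:GT} of $\GT$ and \rprop{S2} to remove the contribution of $\ST\uv[T]$), by $\lproj[T]{l}\vec\psi\cdot\normal$-type projections on the faces, etc. — produces a cancellation of the leading terms, leaving a residual that pairs differences $\vec\psi-\vlproj[T]{k}\vec\psi$ (and $\vec\psi-\lproj[F]{k}\vec\psi$ on faces) against $\GT\uv[T]$ and against the boundary differences $(\dTF-\dT)\uv[T]$; bounding these by Cauchy–Schwarz/Hölder, using \eqref{eq:approx} and \eqref{eq:approx.trace} on $\vec\psi$ with $\alpha=0$, $\ell=k$, $s=k+1$, and recognising the discrete norm \eqref{eq:norm1p.T} after dividing by $\norm[L^p(\Omega)^d]{\grD[h]\uv}$ (which controls $\norm[1,p,h]{\uv}$ up to a constant by Proposition~\ref{prop:norm.equiv} and \rprop{S1}), gives the bound $h^{k+1}\norm[W^{k+1,p'}(\Th)^d]{\vec\psi}$. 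The main obstacle is the careful bookkeeping in this last integration-by-parts argument — in particular, getting the weighted face terms to match up so that everything that is not an approximation residual cancels exactly, and handling the $L^p$ versus $L^2$ discrepancy for $\ST$ via \rprop{S3} with the correct scaling in $h_T$; the scalar estimates \eqref{est:terms.SD:Lp}–\eqref{est:terms.SD:W1p} are comparatively routine consequences of \eqref{eq:approx.approx.trace}.
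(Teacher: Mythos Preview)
Your plan is essentially the paper's proof. Two small points are worth flagging.

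First, for \eqref{est:terms.SD:W1p} you re-derive the bound on $\norm[L^p(T)^d]{\ST\IT\phi}$ from \rprop{S1}, \rprop{S3}, and the approximation properties of $\rT\IT=\eproj[T]{k+1}$. This is exactly the content of Proposition~\ref{prop:ST}(ii) (the consistency estimate~\eqref{eq:ST:consistency}), which the paper simply quotes; your route is the same argument written out in place.

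Second, your derivation of \eqref{est:SD} has a minor regularity gap in the case $l=k-1$ with $k\ge 1$. There $\min(k,l^+)+2=k+1$, so \eqref{est:SD} only assumes $\phi\in W^{k+1,p}(\Th)$, while \eqref{est:terms.SD:W1p} as stated requires $\phi\in W^{k+2,p}(\Th)$. You cannot therefore invoke \eqref{est:terms.SD:W1p} directly and just take the smaller exponent. The paper handles this by noting that the same argument used for \eqref{est:terms.SD:W1p}, with $s=k$ in place of $s=k+1$ in the application of \eqref{eq:approx}, yields $\norm[L^p(\Omega)^d]{\grD[h]\Ih\phi-\GRAD\phi}\lesssim h^{k}\norm[W^{k+1,p}(\Th)]{\phi}$, which is enough.

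For \eqref{est:WD} your outline is correct; just note that the face residual that naturally appears after the integration-by-parts and the use of \rprop{S2} pairs $(\vlproj[T]{k}\vec\psi-\vec\psi)\SCAL\normal_{TF}$ against $v_F-v_T$ (the building blocks of $\tnorm[1,p,h]{{\cdot}}$), not against $(\dTF-\dT)\uv[T]$. The paper then closes with the norm equivalence~\eqref{eq:norm.equiv}, exactly as you suggest via Proposition~\ref{prop:norm.equiv}.
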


\begin{proof}
  See \ref{sec:est.SD.WD}.
\end{proof}

\begin{remark}[Order of {$\SD[h]$}]
  In the case $l\ge k$, \eqref{est:SD} yields the optimal order $\mathcal O(h^{k+1})$ for interpolations
  of smooth enough functions. If $l=k-1$, one order is lost and \eqref{est:SD} gives an $\mathcal O(h^k)$
  estimate (but, as shown by \eqref{est:terms.SD}, this loss is only perceptible on the approximations
  of the functions, not of their gradients).
\end{remark}

\subsection{Local stabilising contribution based on a Raviart--Thomas--N\'ed\'elec subspace}\label{sec:stab.RT}

We construct in this section a stabilising contribution that fulfils the requirements expressed by \rprop{S1}--\rprop{S3}.

\subsubsection{An inspiring remark}\label{sec:insp}

Let, for the sake of brevity, $$\dGT\coloneq\GRAD\rT-\GT.$$
We start by observing that it holds, for all $\uv[T]\in\UT$ and all $\bphi\in\Poly{k}(T)^d$,
\begin{equation*} 
  \begin{aligned}
    (\dGT\uv[T],\bphi)_T
    &= (\GRAD\rT\uv[T],\bphi)_T - (\GT\uv[T],\bphi)_T
    \\
    &= (v_T-\rT\uv[T],\DIV\bphi)_T + \sum_{F\in\Fh[T]}(\rT\uv[T]-v_F,\bphi\SCAL\normal_{TF})_F
    \\
    &= (\lproj[T]{l}(v_T-\rT\uv[T]),\DIV\bphi)_T + \sum_{F\in\Fh[T]}(\lproj[F]{k}(\rT\uv[T]-v_F),\bphi\SCAL\normal_{TF})_F
    \\
    &= -(\dT\uv[T],\DIV\bphi)_T + \sum_{F\in\Fh[T]} (\dTF\uv[T],\bphi\SCAL\normal_{TF})_F
    \\
    &= (\GRAD\dT\uv[T],\bphi)_T + \sum_{F\in\Fh[T]} ((\dTF-\dT)\uv[T],\bphi\SCAL\normal_{TF})_F,
  \end{aligned}
\end{equation*}
where we have used the definition of $\dGT$ in the first line,
an integration by parts together with the definition~\eqref{eq:GT} of $\GT$ in the second line,
~\eqref{eq:lproj} together with the fact that $\DIV\bphi\in\Poly{k-1}(T)\subset\Poly{l}(T)$ since $l\ge k-1$ and $\bphi_{|F}\SCAL\normal_{TF}\in\Poly{k}(F)$ for all $F\in\Fh[T]$ to introduce the projectors in the third line,
the definition~\eqref{eq:dT.dTF} of $\dT$ and $\dTF$ in the fourth line,
and an integration by parts to conclude.
Rearranging the terms, we arrive at
\begin{equation}\label{eq:inspiration.residual}
  ((\dGT-\GRAD\dT)\uv[T],\bphi)_T
  = \sum_{F\in\Fh[T]}((\dTF-\dT)\uv[T],\bphi\SCAL\normal_{TF})_F.
\end{equation}
A few remarks are of order to illustrate the consequences of the above relation.
\begin{remark}[Control of the element-based difference through face-based differences]\label{rem:control.volumic}
  A first notable consequence is that the element-based difference \mbox{$(\dGT-\GRAD\dT)\uv[T]$} can be controlled in terms of the face-based differences $\{(\dTF-\dT)\uv[T]\st F\in\Fh[T]\}$:
  For all $T\in\Th$ and all $\uv[T]\in\UT$, it holds
  \begin{equation}\label{eq:control.dGT-GdT}
    \norm[L^2(T)^d]{(\dGT-\GRAD\dT)\uv[T]}\lesssim\seminorm[2,\partial T]{\uv[T]},
  \end{equation}
  where $a\lesssim b$ means $a\le Cb$ with real number $C>0$ independent of $h$ and of $T$, but possibly depending on $d$, $\varrho$, $k$, and $l$.
  To prove~\eqref{eq:control.dGT-GdT}, it suffices to observe that
  $$
  \begin{aligned}
    \norm[L^2(T)^d]{(\dGT-\GRAD\dT)\uv[T]}
    &= \sup_{\bphi\in\Poly{k}(T)^d,\norm[L^2(T)^d]{\bphi}=1} ((\dGT-\GRAD\dT)\uv[T],\bphi)_T
    \\
    &= \sup_{\bphi\in\Poly{k}(T)^d,\norm[L^2(T)^d]{\bphi}=1} \sum_{F\in\Fh[T]}((\dTF-\dT)\uv[T],\bphi\SCAL\normal_{TF})_F
    \\
    &\le \sup_{\bphi\in\Poly{k}(T)^d,\norm[L^2(T)^d]{\bphi}=1} \seminorm[2,\partial T]{\uv[T]} h_T^{\frac{1}{2}}\norm[L^2(\partial T)]{\bphi\SCAL\normal_T}
    \lesssim\seminorm[2,\partial T]{\uv[T]},
  \end{aligned}
  $$
  where we have used the fact that $(\dGT-\GRAD\dT)\uv[T]\in\Poly{k}(T)^d$ in the first line,~\eqref{eq:inspiration.residual} in the second line, the Cauchy--Schwarz inequality in the third line, and the discrete trace inequality~\eqref{eq:Lp.trace.discrete} below with $p=2$ to infer $h_T^{\frac{1}{2}}\norm[L^2(\partial T)]{\bphi\SCAL\normal_T}\lesssim\norm[L^2(T)^d]{\bphi}$ and conclude.
\end{remark}
\begin{remark}[Stabilisation based on a lifting of face-based differences]\label{rem:stabilisation.idea}
    Let now $\uv[T]\in\UT$ be fixed.
    Relation~\eqref{eq:inspiration.residual} no longer holds true in general if we replace $\bphi$ by a function $\vec{\eta}$ belonging to a space $\RangeST$ larger than $\Poly{k}(T)^d$.
    It then makes sense to define the nontrivial residual linear form $\mathcal{R}_T(\uv[T];\cdot):\RangeST\to\Real$ such that
    $$
    \mathcal{R}_T(\uv[T];\vec{\eta})
    \coloneq
    - ((\dGT-\GRAD\dT)\uv[T],\vec{\eta})_T
    + \sum_{F\in\Fh[T]}((\dTF-\dT)\uv[T],\vec{\eta}\SCAL\normal_{TF})_F.
    $$
    Assume now $\RangeST$ large enough for the $L^2(T)^d$-norm of the Riesz representation $\vec{\rm L}_T\uv[T]\in \RangeST$ of $\mathcal{R}_T(\uv[T];\cdot)$ to control $\seminorm[2,\partial T]{\uv[T]}$ (hence also $\norm[L^2(T)^d]{(\dGT-\GRAD\dT)\uv[T]}$ by~\eqref{eq:control.dGT-GdT}).
    Property \rprop{S1} is then fulfilled letting the stabilising contribution in~\eqref{eq:DSGDM:grD} be such that $\ST\uv[T]=\vec{\rm L}_T\uv[T]$ for all $\uv[T]$.
    This choice also satisfies \rprop{S2} by construction since $\Poly{k}(T)^d\subset\RangeST$ and $\mathcal{R}_T(\uv[T];\cdot)$ vanishes on $\Poly{k}(T)^d$ for any $\uv[T]\in\UT$ owing to~\eqref{eq:inspiration.residual}.
    Finally, property \rprop{S3} is satisfied provided that $\RangeST$ is a piecewise polynomial space on a regular polytopal partition of $T$.

    The above procedure can be interpreted as a lifting on $\RangeST$ of the face-based differences $\{(\dTF-\dT)\uv[T]\st F\in\Fh[T]\}$ realised by means of the operator $\vec{\rm L}_T$.
    This interpretation justifies the terminology employed in Section~\ref{sec:lifting} below.
\end{remark}

\subsubsection{A Raviart--Thomas--N\'ed\'elec subspace}

In this section we define a good candidate to play the role of the space $\RangeST$ in Remark~\ref{rem:stabilisation.idea}.
From this point on, we work on a fixed mesh element $T\in\Th$ and assume, for the sake of simplicity, that
\begin{inparaenum}[(i)]
\item the faces of $T$ are $(d{-}1)$-simplices and that
\item $T$ is star-shaped with respect to a point $\vec{x}_T$ whose ortogonal distance $d_{TF}$ from each face $F\in\Fh[T]$ satisfies
  \begin{equation}\label{eq:dTF.hT}
    d_{TF}\ge\varrho h_T,
  \end{equation}
\end{inparaenum}
where, as in Section~\ref{sec:convergent.GS}, $\varrho$ denotes the mesh regularity parameter.
These assumptions can be relaxed using a simplicial submesh of $T$ and at the price of a heavier notation.
For all $F\in\Fh[T]$, we denote by $\PTF$ the $d$-simplex of base $F$ and apex $\vec{x}_T$, and by $\Fh[TF]$ the set of $(d-1)$-simplicial faces of $\PTF$, see Figure \ref{fig-PTF}.
In what follows, we work on the face-based simplicial partition $\mathcal{P}_T\coloneq\{\PTF\st F\in\Fh[T]\}$.

\begin{figure}
\begin{center}
\input{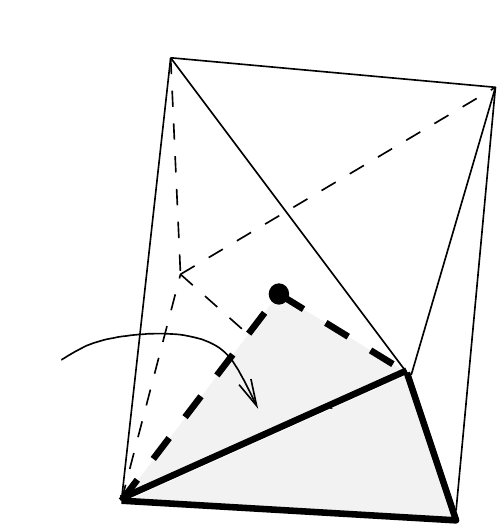_t}
\caption{Illustration of $P_{TF}$.}
\label{fig-PTF}
\end{center}
\end{figure}

For an integer $m\ge 0$ and a face $F\in\Fh[T]$, we let $\RT[m](\PTF)\coloneq\Poly{m}(\PTF)^d+\vec{x}\mathbb{P}^m(\PTF)$ denote the Raviart--Thomas--N\'ed\'elec space~\cite{Raviart.Thomas:77,Nedelec:80} of degree $m$ on the simplex $\PTF$.
Each function $\vec{\eta}\in\RT[m](\PTF)$ is uniquely identified by the following degrees of freedom (see, e.g.,~\cite[Proposition~2.3.4]{Boffi.Brezzi.ea:13}):
$$
\begin{alignedat}{2}
  (\vec{\eta}\SCAL\normal_\sigma,q)_\sigma&\qquad&&\forall \sigma\in\Fh[TF],\forall q\in\Poly{m}(\sigma),
  \\
  (\vec{\eta},\vec{\chi})_{\PTF} &\qquad&&\forall\vec{\chi}\in\Poly{m-1}(\PTF)^d,
\end{alignedat}
$$
where, for all $\sigma\in\Fh[TF]$, the normal $\normal_\sigma$ points out of $\PTF$.
Additionally, we note the following relation, valid for all $\vec{\eta}\in\RT[m](\PTF)$:
\begin{equation}\label{eq:control.RT}
  \norm[L^2(P_{TF})^d]{\vec{\eta}}^2
  \simeq
  \norm[L^2(P_{TF})^d]{\vlproj[P_{TF}]{m-1}\vec{\eta}}^2
  + \sum_{\sigma\in\Fh[TF]}h_F\norm[L^2(\sigma)]{\vec{\eta}\SCAL\normal_\sigma}^2,
\end{equation}
where $\simeq$ means $Ca\le b\le C^{-1}a$ with real number $C>0$ independent of $h$ and of $\PTF$, but possibly depending on $\varrho$ and $m$.

The candidate to play the role of the space $\RangeST$ in Remark~\ref{rem:stabilisation.idea} is $\RT(\mathcal{P}_T)$, the broken Raviart--Thomas--N\'ed\'elec space of degree $(k+1)$ on the submesh $\mathcal{P}_T$.

\subsubsection{Lifting of face-based differences}\label{sec:lifting}

We are now ready to construct the lifting of face-based differences.
Owing to the specific choice of $\RangeST$, we can proceed face by face.
Specifically, for all $F\in\Fh[T]$, we define the lifting operator $\LTF:\UT\to\RT(\PTF)$ such that, for all $\uv[T]\in\UT$, $\LTF\uv[T]$ satisfies for all $\vec{\eta}\in\RT(\PTF)$
\begin{equation}\label{eq:RTF}
  (\LTF\uv[T],\vec{\eta})_{\PTF}
  = 
  -((\dGT-\GRAD\dT)\uv[T],\vec{\eta})_{\PTF}
  + ((\dTF-\dT)\uv[T],\vec{\eta}\SCAL\normal_{TF})_F.
\end{equation}%
In what follows, we extend $\LTF\uv[T]$ by zero outside $\PTF$.
\begin{proposition}[Stabilisation based on a Raviart--Thomas--N\'ed\'elec subspace]\label{prop:stabRTN}
  The following stabilising contribution satisfies properties \rprop{S1}--\rprop{S3}:
  \begin{equation}\label{eq:ST.RT}
    \ST\coloneq\sum_{F\in\Fh[T]}\LTF.
  \end{equation}
\end{proposition}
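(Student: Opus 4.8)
The plan is to verify the three design conditions \rprop{S1}--\rprop{S3} for $\ST = \sum_{F\in\Fh[T]}\LTF$ in turn, exploiting the inspiring identity~\eqref{eq:inspiration.residual} and the norm-equivalence~\eqref{eq:control.RT} on each simplex $\PTF$. The easiest condition is \rprop{S3}: the image of $\LTF$ lies in $\RT(\PTF)\subset\Poly{k+1}(\PTF)^d$, and the $\PTF$'s form the regular polytopal partition $\mathcal{P}_T$ by construction (regularity~\eqref{eq:S3.cond} follows from the star-shapedness assumption~\eqref{eq:dTF.hT} together with the simplicial-face hypothesis), so one may simply take $k_{\rm S}=k+1$. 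Condition \rprop{S2} is also short: for $\bphi\in\Poly{k}(T)^d$, summing~\eqref{eq:RTF} over $F\in\Fh[T]$ and using~\eqref{eq:inspiration.residual} on the right-hand side gives $(\ST\uv[T],\bphi)_T=\sum_{F}(\LTF\uv[T],\bphi)_{\PTF}=\mathcal{R}_T(\uv[T];\bphi)=0$, since the residual form vanishes on $\Poly{k}(T)^d$ by~\eqref{eq:inspiration.residual}; here I use that $\bphi_{|\PTF}\in\Poly{k}(\PTF)^d\subset\RT(\PTF)$ so that~\eqref{eq:RTF} applies with $\vec{\eta}=\bphi_{|\PTF}$.

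**The main point: \rprop{S1}.** The bulk of the work is the two-sided bound $\norm[L^2(T)^d]{\ST\uv[T]}\simeq\seminorm[2,\partial T]{\uv[T]}$. Since the $\LTF\uv[T]$ have disjoint supports, $\norm[L^2(T)^d]{\ST\uv[T]}^2=\sum_{F\in\Fh[T]}\norm[L^2(P_{TF})^d]{\LTF\uv[T]}^2$, so it suffices to show $\norm[L^2(P_{TF})^d]{\LTF\uv[T]}^2\simeq h_F^{-1}\norm[L^2(F)]{(\dTF-\dT)\uv[T]}^2$ for each $F$, plus a contribution from $(\dGT-\GRAD\dT)\uv[T]$ that~\eqref{eq:control.dGT-GdT} shows is itself controlled by $\seminorm[2,\partial T]{\uv[T]}$. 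First I would apply~\eqref{eq:control.RT} with $m=k+1$ to $\vec{\eta}=\LTF\uv[T]$, reducing the $L^2$-norm of $\LTF\uv[T]$ to its projection $\vlproj[P_{TF}]{k}$ plus the normal traces on the faces $\sigma\in\Fh[TF]$. For the normal traces: taking $\vec{\eta}$ in~\eqref{eq:RTF} supported near a single simplicial face $\sigma$ and using that $F$ is itself one of the faces of $\PTF$ (the ``base'') isolates, on $\sigma=F$, the term $((\dTF-\dT)\uv[T],\vec{\eta}\SCAL\normal_{TF})_F$ against the test normal trace, while on the other faces $\sigma\subset\partial\PTF\cap\mathrm{int}(T)$ only the $(\dGT-\GRAD\dT)$-volume term contributes; choosing $\vec{\eta}$ appropriately (a standard RTN bubble construction scaled to $\PTF$) and invoking~\eqref{eq:control.dGT-GdT}, one gets both $h_F\norm[L^2(F)]{\LTF\uv[T]\SCAL\normal_{TF}}^2 \lesssim \norm[L^2(F)]{(\dTF-\dT)\uv[T]}^2 + (\text{volume term})$ and the reverse inequality by testing~\eqref{eq:RTF} directly against $\vec{\eta}=\LTF\uv[T]$, Cauchy--Schwarz, and a discrete trace inequality. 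For the volume projection $\vlproj[P_{TF}]{k}\LTF\uv[T]$, test~\eqref{eq:RTF} with $\vec{\eta}=\vlproj[P_{TF}]{k}\LTF\uv[T]\in\Poly{k}(\PTF)^d\subset\RT(\PTF)$; the boundary term drops if the test function's normal trace on $F$ is handled, and the volume term is bounded by $\norm[L^2(P_{TF})^d]{(\dGT-\GRAD\dT)\uv[T]}\,\norm[L^2(P_{TF})^d]{\vlproj[P_{TF}]{k}\LTF\uv[T]}$, again controlled via~\eqref{eq:control.dGT-GdT}.

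**Expected obstacle and bookkeeping.** The delicate point is the \emph{lower} bound $\seminorm[2,\partial T]{\uv[T]}\lesssim\norm[L^2(T)^d]{\ST\uv[T]}$: one must recover $h_F^{-1}\norm[L^2(F)]{(\dTF-\dT)\uv[T]}^2$ from $\LTF\uv[T]$, which requires choosing, for fixed $\uv[T]$, a specific test function $\vec{\eta}^\star\in\RT(\PTF)$ in~\eqref{eq:RTF} whose normal trace on $F$ equals $(\dTF-\dT)\uv[T]$ (possible since $(\dTF-\dT)\uv[T]\in\Poly{k}(F)$ and $F$ is a simplex, matching the RTN degrees of freedom), whose normal traces on the other faces of $\PTF$ vanish, and whose volume part is zero; then~\eqref{eq:RTF} reads $(\LTF\uv[T],\vec{\eta}^\star)_{\PTF} = -((\dGT-\GRAD\dT)\uv[T],\vec{\eta}^\star)_{\PTF} + \norm[L^2(F)]{(\dTF-\dT)\uv[T]}^2$, and after moving the volume term to the left (absorbing it using~\eqref{eq:control.dGT-GdT} and~\eqref{eq:control.RT} to bound $\norm[L^2(P_{TF})^d]{\vec{\eta}^\star}\lesssim h_F^{1/2}\norm[L^2(F)]{(\dTF-\dT)\uv[T]}$) and applying Cauchy--Schwarz on the left, one arrives at the claimed bound. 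Throughout, the scaling factors $h_F$ versus $h_T$ are interchangeable up to the regularity constant $\varrho$ thanks to mesh regularity and~\eqref{eq:dTF.hT}, so I would track powers of $h_F$ only and invoke regularity once at the end. Finally, I would assemble the local estimates and sum over $F\in\Fh[T]$ to conclude \rprop{S1}, which completes the proof.
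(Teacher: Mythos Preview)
Your treatment of \rprop{S2} matches the paper's exactly. For \rprop{S3}, note that $\RT(\PTF)=\RT[k+1](\PTF)=\Poly{k+1}(\PTF)^d+\vec{x}\,\Poly{k+1}(\PTF)\subset\Poly{k+2}(\PTF)^d$, so $k_{\rm S}=k+2$, not $k+1$; this is a slip, not a gap.

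For the upper bound in \rprop{S1}, the paper's argument is considerably more direct than yours: simply set $\vec{\eta}=\LTF\uv[T]$ in~\eqref{eq:RTF}, sum over $F$, and apply Cauchy--Schwarz together with~\eqref{eq:control.dGT-GdT} and the discrete trace inequality~\eqref{eq:Lp.trace.discrete}. There is no need to decompose $\LTF\uv[T]$ via~\eqref{eq:control.RT} or to analyse its normal traces face by face.

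The real issue is in your lower bound. Your test function $\vec{\eta}^\star$ (normal trace $(\dTF-\dT)\uv[T]$ on $F$, zero normal trace on the other faces of $\PTF$, interior DOFs zero) is exactly the paper's choice up to an $h_F^{-1}$ scaling. But the crucial observation you miss is that the condition ``volume part is zero'' means precisely $(\vec{\eta}^\star,\vec{\chi})_{\PTF}=0$ for all $\vec{\chi}\in\Poly{k}(\PTF)^d$; since $(\dGT-\GRAD\dT)\uv[T]\in\Poly{k}(T)^d\subset\Poly{k}(\PTF)^d$, the volume term $((\dGT-\GRAD\dT)\uv[T],\vec{\eta}^\star)_{\PTF}$ \emph{vanishes identically}. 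There is nothing to move or absorb. Your proposed alternative---bounding this term via~\eqref{eq:control.dGT-GdT} and absorbing---would, after squaring and summing over $F\in\Fh[T]$, produce an estimate of the form $\seminorm[2,\partial T]{\uv[T]}^2\lesssim\norm[L^2(T)^d]{\ST\uv[T]}^2+C\,\seminorm[2,\partial T]{\uv[T]}^2$ with a constant $C$ depending only on mesh regularity. Nothing forces $C<1$, so the absorption would fail. The argument closes precisely because your own choice of $\vec{\eta}^\star$ already kills the volume term.
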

\begin{proof}
  (i) \emph{Proof of \rprop{S1}.}
  We abridge by $a\lesssim b$ the inequality $a\le Cb$ with real number $C$ independent of both $h$ and $T$, but possibly depending on $d$, $\varrho$, $k$, and $l$.
  We start by proving that
  \begin{equation}\label{eq:ST.RT:1}
    \seminorm[2,\partial T]{\uv[T]}\lesssim\norm[L^2(T)^d]{\ST\uv[T]}.
  \end{equation}
  Let $\vec{\eta}\in\RT(\PTF)$ be such that
  \begin{equation}\label{def:eta.stab}
  \begin{alignedat}{2}
    (\vec{\eta}\SCAL\normal_{TF},q)_F &= h_F^{-1}((\dTF-\dT)\uv[T],q)_F &\qquad&\forall q\in\Poly{k+1}(F),
    \\
    (\vec{\eta}\SCAL\normal_\sigma,q)_{\sigma} &= 0 &\qquad&\forall\sigma\in\Fh[TF]\setminus\{F\},\forall q\in\Poly{k+1}(\sigma),
    \\
    (\vec{\eta},\vec{\chi})_{\PTF} &= 0 &\qquad&\forall\vec{\chi}\in\Poly{k}(\PTF)^d.
  \end{alignedat}
  \end{equation}
  Plugging this definition into~\eqref{eq:RTF} and using the Cauchy--Schwarz inequality, we infer that
  $$
  \begin{aligned}
    h_F^{-1}\norm[L^2(F)]{(\dTF-\dT)\uv[T]}^2
    &\le\norm[L^2(\PTF)^d]{\LTF\uv[T]}\norm[L^2(\PTF)^d]{\vec{\eta}}
    \\
    &\lesssim \norm[L^2(\PTF)^d]{\LTF\uv[T]} h_F^{-\frac12}\norm[L^2(F)]{(\dTF-\dT)\uv[T]},
  \end{aligned}
  $$
  where we have used~\eqref{eq:control.RT} and \eqref{def:eta.stab} with $q=\vec{\eta}\SCAL\normal_{TF}\in \Poly{k+1}(F)$
or $q=\vec{\eta}\SCAL\normal_\sigma\in\Poly{k+1}(\sigma)$ to estimate the $L^2$-norm of $\vec{\eta}$.
  Dividing by $h_F^{-\frac12}\norm[L^2(F)]{(\dTF-\dT)\uv[T]}$, squaring, summing over $F\in\Fh[T]$, and taking the square root of the resulting inequality proves~\eqref{eq:ST.RT:1}.

  Let us now prove that 
  \begin{equation}\label{eq:ST.RT:2}
    \norm[L^2(T)^d]{\ST\uv[T]}\lesssim\seminorm[2,\partial T]{\uv[T]}.
  \end{equation}
  Letting in~\eqref{eq:RTF} $\vec{\eta}=\LTF\uv[T]$, summing over $F\in\Fh[T]$, and using multiple times the Cauchy--Schwarz inequality, we obtain
  $$
  \begin{aligned}
    \norm[L^2(T)^d]{\ST\uv[T]}^2
    &= \sum_{F\in\Fh[T]}\norm[L^2(\PTF)^d]{\LTF\uv[T]}^2
    \\
    &\le\bigg(
    \norm[L^2(T)^d]{(\dGT-\GRAD\dT)\uv[T]}^2 + \sum_{F\in\Fh[T]} h_F^{-1}\norm[L^2(F)]{(\dTF-\dT)\uv[T]}^2
    \bigg)^{\frac12}
    \\
    &\qquad
    \times\bigg(
    \norm[L^2(T)^d]{\ST\uv[T]}^2 + h_T \norm[L^2(\partial T)]{\ST\uv[T]\SCAL\normal_T}^2
    \bigg)^{\frac12}
    \\
    &\lesssim\seminorm[2,\partial T]{\uv[T]}\norm[L^2(T)^d]{\ST\uv[T]},
  \end{aligned}
  $$
  where we have used ~\eqref{eq:control.dGT-GdT} and the discrete trace inequality~\eqref{eq:Lp.trace.discrete} below with $p=2$ to conclude.
  Combining~\eqref{eq:ST.RT:1} and~\eqref{eq:ST.RT:2} gives \rprop{S1}.

  (ii) \emph{Proof of \rprop{S2}.}
  Let $\bphi\in\Poly{k}(T)^d$, set $\vec{\eta}=\bphi_{|\PTF}\in\Poly{k}(\PTF)^d\subset\RT(\PTF)$ in~\eqref{eq:RTF}, and sum over $F\in\Fh[T]$ to obtain
  $$
  \begin{aligned}
    (\ST\uv[T],\bphi)_T
    &= \sum_{F\in\Fh[T]}(\LTF\uv[T],\bphi)_{\PTF}
    \\
    &= -((\dGT-\GRAD\dT)\uv[T],\bphi)_T + \sum_{F\in\Fh[T]}((\dTF-\dT)\uv[T],\bphi\SCAL\normal_{TF})_F=0,
  \end{aligned}
  $$
  where we have used~\eqref{eq:inspiration.residual} to conclude.

  (iii) \emph{Proof of \rprop{S3}.}
  The regularity of the face-based partition $\mathcal{P}_T$ follows from~\eqref{eq:dTF.hT} in view of~\cite[Lemma~3]{Di-Pietro.Lemaire:15}.
  Property \rprop{S3} is then satisfied with $k_{\rm S}\coloneq k+2$.
\end{proof}


\begin{remark}[The lowest-order case]
  The lifting operators $\vec{L}_{TF}$ and stabilisation $\ST$ described above are some of the possible choices that satisfy \rprop{S1}--\rprop{S3}. In certain cases, simpler liftings can be designed. Consider for example $k=0$ and $l\in\{-1,0\}$.
  In this case, \begin{inparaenum}[(i)]
  \item $\dGT\uv[T]=\vec{0}$ since $\Poly{0}(T)^d=\GRAD\Poly{1}(T)$, so that $\GT[0]=\GRAD\rT[1]$;
  \item $\dT[-1]=\dT[0]=0$ by \eqref{eq:rT.average}.
  \end{inparaenum}
  Hence, \eqref{eq:RTF} reduces to 
  \begin{equation}\label{eq:RTF.0}
    (\LTF[1]\uv[T],\vec{\eta})_{\PTF}=(\dTF\uv[T],\vec{\eta}\SCAL\normal_{TF})_F.
  \end{equation}
  An appropriate lifting can then be constructed in $\Poly{0}(\PTF)^d$ instead of $\RT[1](\PTF)$ as described hereafter, and the assumption that the faces of $T$ are simplices can be removed (for any $F\in\Fh[T]$, $P_{TF}$ is then the pyramid with base $F$ and apex $\vec{x}_T$).
  Define $\vec{L}_{TF}^{\mbox{\tiny\textsc{HMM}}}:\UT[0,l]\to\Poly{0}(\PTF)^d$ such that, for all $\uv[T]\in\UT[0,l]$,
  $$
  \vec{L}_{TF}^{\mbox{\tiny\textsc{HMM}}}\uv[T] = \frac{|F|}{|\PTF|}\dTF[0]\uv[T]\normal_{TF}
  = \frac{d}{d_{TF}}\dTF[0]\uv[T]\normal_{TF}.
  $$
  Notice that this lifting is designed to satisfy \eqref{eq:RTF.0} for all $\vec{\eta}\in \Poly{0}(\PTF)^d$.
  Defining, for all $T\in\Th$, $\ST$ by \eqref{eq:ST.RT}, the discrete elements of the corresponding DSGD $\mathcal{D}$ are then
  \[
  \XDz=\{\uv=((v_T)_{T\in\Th},(v_F)_{F\in\Fh})\st v_T\in\Real\,,\;v_F\in\Real\,,\;v_F=0\quad\forall
  F\in\Fhb\},
  \]
  and, for all $\uv\in \XDz$ and all $T\in\Th$, expressing \eqref{eq:GT}, \eqref{eq:rT}
  and \eqref{eq:dT.dTF} for $k=0$, 
  \begin{align*}
    (\PiD \uv)_{|T}={}&v_T\,,\\
    (\grD \uv)_{|P_{TF}}={}&\GT[0]\uv + \frac{d}{d_{TF}}\left(v_T+\GT[0]\uv[T]\cdot(\overline{\vec{x}}_F-\vec{x}_T)-v_F\right)\normal_{TF}\quad
    \forall F\in\Fh[T]
  \end{align*}
  where $\overline{\vec{x}}_F$ is the centre of mass of $F$ and
  $\GT[0]\uv[T] \coloneq \frac{1}{|T|}\sum_{F\in\Fh[T]}|F|v_F\normal_{TF}$. 

  This gradient discretisation corresponds, for $k=l=0$, to the Hybrid-Mixed-Mimetic methods of~\cite{Droniou.Eymard.ea:10}
  (with the isomorphism $A_T=-\frac{1}{\sqrt{d}}{\rm Id}$ in \cite[Section 5.3.1]{Droniou.Eymard.ea:10}).
  The coercivity, GD-consistency, limit--conformity and compactness of families of such GDs are established in
  \cite[Chapter 13]{gdm}, and can also be proved by checking that $\ST$ satisfies \rprop{S1}--\rprop{S3}
  (use $\vec{\eta}=h_F^{-1}\dTF[0]\uv[T]\normal_{TF}$ in \eqref{eq:RTF.0} to establish $\gtrsim$ in 
  \eqref{eq:ST:stability}, and \cite[Eq. (13.10)]{gdm} to see that $\int_T\ST\uv[T]=\vec{0}$ and thus that
  \rprop{S2} holds).
\end{remark}

\subsection{Numerical examples}\label{sec:numerical.examples}

In this section we provide numerical evidence to support our theoretical results.

\subsubsection{Trigonometric solution for $p\ge 2$}\label{sec:numerical.examples:trigonometric}

We solve inside the two-dimensional unit square $\Omega=(0,1)^2$ the $p$-Laplace problem~\eqref{eq:plap:weak} corresponding to the exact solution
\begin{equation}\label{eq:trigonometric}
  u(\vec{x}) = \sin(\pi x_1)\sin(\pi x_2),
\end{equation}
with $p\in\{2,3,4\}$ and source term inferred from $u$.
\begin{figure}\centering
  \includegraphics[height=3cm]{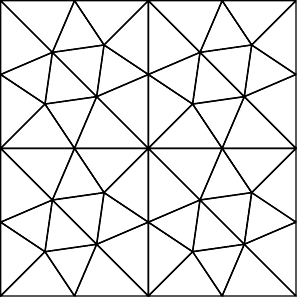}
  \hspace{0.25cm}
  \includegraphics[height=3cm]{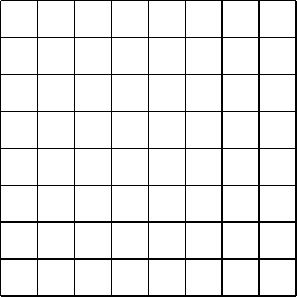}
  \hspace{0.25cm}
  \includegraphics[height=3cm]{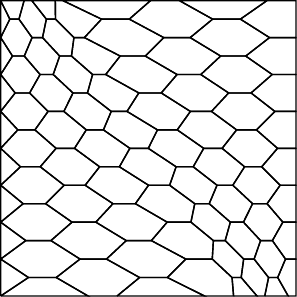}
  \hspace{0.25cm}
  \includegraphics[height=3cm]{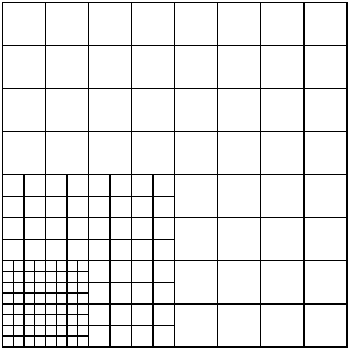}
  \hspace{0.25cm}
  \caption{Triangular, Cartesian, hexagonal and locally refined meshes for the numerical examples of Section~\ref{sec:numerical.examples}\label{fig:meshes}}
\end{figure}
We consider the matching triangular, Cartesian, (predominantly) hexagonal, and locally refined mesh families depicted in Figure \ref{fig:meshes} and polynomial degrees ranging from $0$ to $4$.
The first, second, and fourth mesh families originate from the FVCA5 benchmark~\cite{Herbin.Hubert:08}, whereas the third from~\cite{Di-Pietro.Lemaire:15}.
The local refinement in the third mesh family has no specific meaning here: its purpose is to demonstrate the seamless treatment of nonconforming interfaces.

\begin{figure}\centering
  \begin{footnotesize}
    \ref{legend:trigonometric.tria:dsgd}
  \end{footnotesize}
  \vspace{0.25cm} \\
  \begin{minipage}{0.32\textwidth}\centering
    \begin{tikzpicture}[scale=0.60]
      \begin{loglogaxis}[
          legend columns=-1,
          legend to name=legend:trigonometric.tria:dsgd,
          legend style={/tikz/every even column/.append style={column sep=0.35cm}}
        ]
        \addplot[red,mark=*,mark options={solid},thick] table[x=meshsize,y=err_p2]{plap-rt_0_mesh1.dat};
        \addplot[blue,mark=square*,mark options={solid},thick] table[x=meshsize,y=err_p2]{plap-rt_1_mesh1.dat};
        \addplot[green!50!black,mark=diamond*,mark options={solid},thick] table[x=meshsize,y=err_p2]{plap-rt_2_mesh1.dat};
        \addplot[brown,mark=triangle*,mark options={solid},thick] table[x=meshsize,y=err_p2]{plap-rt_3_mesh1.dat};
        \addplot[violet,mark=star,mark options={solid},thick] table[x=meshsize,y=err_p2]{plap-rt_4_mesh1.dat};

        \legend{$k=0$, $k=1$, $k=2$, $k=3$, $k=4$};
        
        \logLogSlopeTriangle{0.90}{0.3}{0.1}{1}{black};
        \logLogSlopeTriangle{0.90}{0.3}{0.1}{2}{black};
        \logLogSlopeTriangle{0.90}{0.3}{0.1}{3}{black};
        \logLogSlopeTriangle{0.90}{0.3}{0.1}{4}{black};
        \logLogSlopeTriangle{0.90}{0.3}{0.1}{5}{black};
      \end{loglogaxis}
    \end{tikzpicture}
    \subcaption{$p=2$, triangular}
  \end{minipage}  
  \begin{minipage}{0.32\textwidth}\centering
    \begin{tikzpicture}[scale=0.60]
      \begin{loglogaxis}
        \addplot[red,mark=*,mark options={solid},thick] table[x=meshsize,y=err_p3]{plap-rt_0_mesh1.dat};
        \addplot[blue,mark=square*,mark options={solid},thick] table[x=meshsize,y=err_p3]{plap-rt_1_mesh1.dat};
        \addplot[green!50!black,mark=diamond*,mark options={solid},thick] table[x=meshsize,y=err_p3]{plap-rt_2_mesh1.dat};
        \addplot[brown,mark=triangle*,mark options={solid},thick] table[x=meshsize,y=err_p3]{plap-rt_3_mesh1.dat};
        \addplot[violet,mark=star,mark options={solid},thick] table[x=meshsize,y=err_p3]{plap-rt_4_mesh1.dat};

        \logLogSlopeTriangle{0.875}{0.35}{0.1}{1/2}{black};
        \logLogSlopeTriangle{0.875}{0.35}{0.1}{1}{black};
        \logLogSlopeTriangle{0.875}{0.35}{0.1}{3/2}{black};
        \logLogSlopeTriangle{0.875}{0.35}{0.1}{2}{black};
        \logLogSlopeTriangle{0.875}{0.35}{0.1}{5/2}{black};
      \end{loglogaxis}        
    \end{tikzpicture}
    \subcaption{$p=3$, triangular}
  \end{minipage}
  \begin{minipage}{0.32\textwidth}\centering
    \begin{tikzpicture}[scale=0.60]
      \begin{loglogaxis}
        \addplot[red,mark=*,mark options={solid},thick] table[x=meshsize,y=err_p4]{plap-rt_0_mesh1.dat};
        \addplot[blue,mark=square*,mark options={solid},thick] table[x=meshsize,y=err_p4]{plap-rt_1_mesh1.dat};
        \addplot[green!50!black,mark=diamond*,mark options={solid},thick] table[x=meshsize,y=err_p4]{plap-rt_2_mesh1.dat};
        \addplot[brown,mark=triangle*,mark options={solid},thick] table[x=meshsize,y=err_p4]{plap-rt_3_mesh1.dat};
        \addplot[violet,mark=star,mark options={solid},thick] table[x=meshsize,y=err_p4]{plap-rt_4_mesh1.dat};

        \logLogSlopeTriangle{0.875}{0.35}{0.1}{1/3}{black};
        \logLogSlopeTriangle{0.875}{0.35}{0.1}{2/3}{black};
        \logLogSlopeTriangle{0.875}{0.35}{0.1}{1}{black};
        \logLogSlopeTriangle{0.875}{0.35}{0.1}{4/3}{black};
        \logLogSlopeTriangle{0.875}{0.35}{0.1}{5/3}{black};
      \end{loglogaxis}
    \end{tikzpicture}
    \subcaption{$p=4$, triangular}
  \end{minipage}
  \vspace{0.25cm} \\
  \begin{minipage}{0.32\textwidth}\centering
    \begin{tikzpicture}[scale=0.60]
      \begin{loglogaxis}
        \addplot[red,mark=*,mark options={solid},thick] table[x=meshsize,y=err_p2]{plap-rt_0_mesh2.dat};
        \addplot[blue,mark=square*,mark options={solid},thick] table[x=meshsize,y=err_p2]{plap-rt_1_mesh2.dat};
        \addplot[green!50!black,mark=diamond*,mark options={solid},thick] table[x=meshsize,y=err_p2]{plap-rt_2_mesh2.dat};
        \addplot[brown,mark=triangle*,mark options={solid},thick] table[x=meshsize,y=err_p2]{plap-rt_3_mesh2.dat};
        \addplot[violet,mark=star,mark options={solid},thick] table[x=meshsize,y=err_p2]{plap-rt_4_mesh2.dat};

        \logLogSlopeTriangle{0.90}{0.3}{0.1}{1}{black};
        \logLogSlopeTriangle{0.90}{0.3}{0.1}{2}{black};
        \logLogSlopeTriangle{0.90}{0.3}{0.1}{3}{black};
        \logLogSlopeTriangle{0.90}{0.3}{0.1}{4}{black};
        \logLogSlopeTriangle{0.90}{0.3}{0.1}{5}{black};        
      \end{loglogaxis}
    \end{tikzpicture}
    \subcaption{$p=2$, Cartesian}
  \end{minipage}  
  \begin{minipage}{0.32\textwidth}\centering
    \begin{tikzpicture}[scale=0.60]
      \begin{loglogaxis}
        \addplot[red,mark=*,mark options={solid},thick] table[x=meshsize,y=err_p3]{plap-rt_0_mesh2.dat};
        \addplot[blue,mark=square*,mark options={solid},thick] table[x=meshsize,y=err_p3]{plap-rt_1_mesh2.dat};
        \addplot[green!50!black,mark=diamond*,mark options={solid},thick] table[x=meshsize,y=err_p3]{plap-rt_2_mesh2.dat};
        \addplot[brown,mark=triangle*,mark options={solid},thick] table[x=meshsize,y=err_p3]{plap-rt_3_mesh2.dat};
        \addplot[violet,mark=star,mark options={solid},thick] table[x=meshsize,y=err_p3]{plap-rt_4_mesh2.dat};

        \logLogSlopeTriangle{0.875}{0.35}{0.1}{1/2}{black};
        \logLogSlopeTriangle{0.875}{0.35}{0.1}{1}{black};
        \logLogSlopeTriangle{0.875}{0.35}{0.1}{3/2}{black};
        \logLogSlopeTriangle{0.875}{0.35}{0.1}{2}{black};
        \logLogSlopeTriangle{0.875}{0.35}{0.1}{5/2}{black};
      \end{loglogaxis}
    \end{tikzpicture}
    \subcaption{$p=3$, Cartesian}
  \end{minipage}
  \begin{minipage}{0.32\textwidth}\centering
    \begin{tikzpicture}[scale=0.60]
      \begin{loglogaxis}
        \addplot[red,mark=*,mark options={solid},thick] table[x=meshsize,y=err_p4]{plap-rt_0_mesh2.dat};
        \addplot[blue,mark=square*,mark options={solid},thick] table[x=meshsize,y=err_p4]{plap-rt_1_mesh2.dat};
        \addplot[green!50!black,mark=diamond*,mark options={solid},thick] table[x=meshsize,y=err_p4]{plap-rt_2_mesh2.dat};
        \addplot[brown,mark=triangle*,mark options={solid},thick] table[x=meshsize,y=err_p4]{plap-rt_3_mesh2.dat};
        \addplot[violet,mark=star,mark options={solid},thick] table[x=meshsize,y=err_p4]{plap-rt_4_mesh2.dat};

        \logLogSlopeTriangle{0.875}{0.35}{0.1}{1/3}{black};
        \logLogSlopeTriangle{0.875}{0.35}{0.1}{2/3}{black};
        \logLogSlopeTriangle{0.875}{0.35}{0.1}{1}{black};
        \logLogSlopeTriangle{0.875}{0.35}{0.1}{4/3}{black};
        \logLogSlopeTriangle{0.875}{0.35}{0.1}{5/3}{black};       
      \end{loglogaxis}
    \end{tikzpicture}
    \subcaption{$p=4$, Cartesian}
  \end{minipage}        
  \vspace{0.25cm} \\
  \begin{minipage}{0.32\textwidth}\centering
    \begin{tikzpicture}[scale=0.60]
      \begin{loglogaxis}
        \addplot[red,mark=*,mark options={solid},thick] table[x=meshsize,y=err_p2]{plap-rt_0_pi6_tiltedhexagonal.dat};
        \addplot[blue,mark=square*,mark options={solid},thick] table[x=meshsize,y=err_p2]{plap-rt_1_pi6_tiltedhexagonal.dat};
        \addplot[green!50!black,mark=diamond*,mark options={solid},thick] table[x=meshsize,y=err_p2]{plap-rt_2_pi6_tiltedhexagonal.dat};
        \addplot[brown,mark=triangle*,mark options={solid},thick] table[x=meshsize,y=err_p2]{plap-rt_3_pi6_tiltedhexagonal.dat};
        \addplot[violet,mark=star,mark options={solid},thick] table[x=meshsize,y=err_p2]{plap-rt_4_pi6_tiltedhexagonal.dat};

        \logLogSlopeTriangle{0.90}{0.3}{0.1}{1}{black};
        \logLogSlopeTriangle{0.90}{0.3}{0.1}{2}{black};
        \logLogSlopeTriangle{0.90}{0.3}{0.1}{3}{black};
        \logLogSlopeTriangle{0.90}{0.3}{0.1}{4}{black};
        \logLogSlopeTriangle{0.90}{0.3}{0.1}{5}{black};
      \end{loglogaxis}
    \end{tikzpicture}
    \subcaption{$p=2$, hexagonal}
  \end{minipage}  
  \begin{minipage}{0.32\textwidth}\centering
    \begin{tikzpicture}[scale=0.60]
      \begin{loglogaxis}
        \addplot[red,mark=*,mark options={solid},thick] table[x=meshsize,y=err_p3]{plap-rt_0_pi6_tiltedhexagonal.dat};
        \addplot[blue,mark=square*,mark options={solid},thick] table[x=meshsize,y=err_p3]{plap-rt_1_pi6_tiltedhexagonal.dat};
        \addplot[green!50!black,mark=diamond*,mark options={solid},thick] table[x=meshsize,y=err_p3]{plap-rt_2_pi6_tiltedhexagonal.dat};
        \addplot[brown,mark=triangle*,mark options={solid},thick] table[x=meshsize,y=err_p3]{plap-rt_3_pi6_tiltedhexagonal.dat};
        \addplot[violet,mark=star,mark options={solid},thick] table[x=meshsize,y=err_p3]{plap-rt_4_pi6_tiltedhexagonal.dat};

        \logLogSlopeTriangle{0.875}{0.35}{0.1}{1/2}{black};
        \logLogSlopeTriangle{0.875}{0.35}{0.1}{1}{black};
        \logLogSlopeTriangle{0.875}{0.35}{0.1}{3/2}{black};
        \logLogSlopeTriangle{0.875}{0.35}{0.1}{2}{black};
        \logLogSlopeTriangle{0.875}{0.35}{0.1}{5/2}{black};        
      \end{loglogaxis}
    \end{tikzpicture}
    \subcaption{$p=3$, hexagonal}
  \end{minipage}
  \begin{minipage}{0.32\textwidth}\centering
    \begin{tikzpicture}[scale=0.60]
      \begin{loglogaxis}
        \addplot[red,mark=*,mark options={solid},thick] table[x=meshsize,y=err_p4]{plap-rt_0_pi6_tiltedhexagonal.dat};
        \addplot[blue,mark=square*,mark options={solid},thick] table[x=meshsize,y=err_p4]{plap-rt_1_pi6_tiltedhexagonal.dat};
        \addplot[green!50!black,mark=diamond*,mark options={solid},thick] table[x=meshsize,y=err_p4]{plap-rt_2_pi6_tiltedhexagonal.dat};
        \addplot[brown,mark=triangle*,mark options={solid},thick] table[x=meshsize,y=err_p4]{plap-rt_3_pi6_tiltedhexagonal.dat};
        \addplot[violet,mark=star,mark options={solid},thick] table[x=meshsize,y=err_p4]{plap-rt_4_pi6_tiltedhexagonal.dat};

        \logLogSlopeTriangle{0.875}{0.35}{0.1}{1/3}{black};
        \logLogSlopeTriangle{0.875}{0.35}{0.1}{2/3}{black};
        \logLogSlopeTriangle{0.875}{0.35}{0.1}{1}{black};
        \logLogSlopeTriangle{0.875}{0.35}{0.1}{4/3}{black};
        \logLogSlopeTriangle{0.875}{0.35}{0.1}{5/3}{black};
      \end{loglogaxis}
    \end{tikzpicture}
    \subcaption{$p=4$, hexagonal}
  \end{minipage}
  \vspace{0.25cm} \\
  \begin{minipage}{0.32\textwidth}\centering
    \begin{tikzpicture}[scale=0.60]
      \begin{loglogaxis}
        \addplot[red,mark=*,mark options={solid},thick] table[x=meshsize,y=err_p2]{plap-rt_0_mesh3.dat};
        \addplot[blue,mark=square*,mark options={solid},thick] table[x=meshsize,y=err_p2]{plap-rt_1_mesh3.dat};
        \addplot[green!50!black,mark=diamond*,mark options={solid},thick] table[x=meshsize,y=err_p2]{plap-rt_2_mesh3.dat};
        \addplot[brown,mark=triangle*,mark options={solid},thick] table[x=meshsize,y=err_p2]{plap-rt_3_mesh3.dat};
        \addplot[violet,mark=star,mark options={solid},thick] table[x=meshsize,y=err_p2]{plap-rt_4_mesh3.dat};

        \logLogSlopeTriangle{0.90}{0.3}{0.1}{1}{black};
        \logLogSlopeTriangle{0.90}{0.3}{0.1}{2}{black};
        \logLogSlopeTriangle{0.90}{0.3}{0.1}{3}{black};
        \logLogSlopeTriangle{0.90}{0.3}{0.1}{4}{black};
        \logLogSlopeTriangle{0.90}{0.3}{0.1}{5}{black};
      \end{loglogaxis}
    \end{tikzpicture}
    \subcaption{$p=2$, locally refined}
  \end{minipage}  
  \begin{minipage}{0.32\textwidth}\centering
    \begin{tikzpicture}[scale=0.60]
      \begin{loglogaxis}
        \addplot[red,mark=*,mark options={solid},thick] table[x=meshsize,y=err_p3]{plap-rt_0_mesh3.dat};
        \addplot[blue,mark=square*,mark options={solid},thick] table[x=meshsize,y=err_p3]{plap-rt_1_mesh3.dat};
        \addplot[green!50!black,mark=diamond*,mark options={solid},thick] table[x=meshsize,y=err_p3]{plap-rt_2_mesh3.dat};
        \addplot[brown,mark=triangle*,mark options={solid},thick] table[x=meshsize,y=err_p3]{plap-rt_3_mesh3.dat};
        \addplot[violet,mark=star,mark options={solid},thick] table[x=meshsize,y=err_p3]{plap-rt_4_mesh3.dat};

        \logLogSlopeTriangle{0.875}{0.35}{0.1}{1/2}{black};
        \logLogSlopeTriangle{0.875}{0.35}{0.1}{1}{black};
        \logLogSlopeTriangle{0.875}{0.35}{0.1}{3/2}{black};
        \logLogSlopeTriangle{0.875}{0.35}{0.1}{2}{black};
        \logLogSlopeTriangle{0.875}{0.35}{0.1}{5/2}{black};        
      \end{loglogaxis}
    \end{tikzpicture}
    \subcaption{$p=3$, locally refined}
  \end{minipage}
  \begin{minipage}{0.32\textwidth}\centering
    \begin{tikzpicture}[scale=0.60]
      \begin{loglogaxis}
        \addplot[red,mark=*,mark options={solid},thick] table[x=meshsize,y=err_p4]{plap-rt_0_mesh3.dat};
        \addplot[blue,mark=square*,mark options={solid},thick] table[x=meshsize,y=err_p4]{plap-rt_1_mesh3.dat};
        \addplot[green!50!black,mark=diamond*,mark options={solid},thick] table[x=meshsize,y=err_p4]{plap-rt_2_mesh3.dat};
        \addplot[brown,mark=triangle*,mark options={solid},thick] table[x=meshsize,y=err_p4]{plap-rt_3_mesh3.dat};
        \addplot[violet,mark=star,mark options={solid},thick] table[x=meshsize,y=err_p4]{plap-rt_4_mesh3.dat};

        \logLogSlopeTriangle{0.875}{0.35}{0.1}{1/3}{black};
        \logLogSlopeTriangle{0.875}{0.35}{0.1}{2/3}{black};
        \logLogSlopeTriangle{0.875}{0.35}{0.1}{1}{black};
        \logLogSlopeTriangle{0.875}{0.35}{0.1}{4/3}{black};
        \logLogSlopeTriangle{0.875}{0.35}{0.1}{5/3}{black};
      \end{loglogaxis}
    \end{tikzpicture}
    \subcaption{$p=4$, locally refined}
  \end{minipage}      
  \caption{$\norm[L^p(\Omega)^d]{\Gh(\Ih u-\uu[h])}$ v. $h$. Trigonometric test case, $p\in\{2,3,4\}$, DSGD.\label{fig:trigonometric:dsgd}}
\end{figure}
We report in Figure~\ref{fig:trigonometric:dsgd} the error $\norm[L^p(\Omega)^d]{\Gh(\Ih u-\uu)}$ versus the meshsize $h$, where $\Gh$ is the consistent (but in general not stable) global gradient reconstruction defined by \eqref{eq:Gh}.
  The reference slopes correspond to the convergence rates resulting from Theorem \ref{thm:error.est.example} together with Proposition \ref{prop:est.SD.WD} (more precisely, the order corresponds to the dominating term).
The theoretical orders of convergence are perfectly matched for $p=2$.
Similar considerations hold for $p=3$ and $k<2$ whereas, for $k\ge 2$, the order of convergence is limited by the regularity of the function $\vec{\psi}\mapsto|\vec{\psi}|^{p-2}\vec{\psi}$, which impacts
on the regularity of $|\GRAD u|^{p-2}\GRAD u$.
Finally, for $p=4$ the theoretical orders of convergence are matched for $k\in\{0,1,2\}$, while faster convergence than predicted by the error estimates is observed for $k\in\{3,4\}$.
This phenomenon will be further investigated in the future.
For a comparison with the HHO method of \cite{Di-Pietro.Droniou:16}, see Figure \ref{fig:trigonometric:hho} below.

\subsubsection{Exponential solution for $p<2$}

\begin{figure}\centering
  \begin{footnotesize}
    \ref{legend:exponential.tria:dsgd}
  \end{footnotesize}
  \vspace{0.25cm} \\
  \begin{minipage}{0.48\textwidth}\centering
    \begin{tikzpicture}[scale=0.70]
      \begin{loglogaxis}[
          legend columns=-1,
          legend to name=legend:exponential.tria:dsgd,
          legend style={/tikz/every even column/.append style={column sep=0.35cm}}
        ]
        \addplot[red,mark=*,mark options={solid},thick] table[x=meshsize,y=err_p]{plap-p-rt_0_mesh1.dat};
        \addplot[blue,mark=square*,mark options={solid},thick] table[x=meshsize,y=err_p]{plap-p-rt_1_mesh1.dat};
        \addplot[green!50!black,mark=diamond*,mark options={solid},thick] table[x=meshsize,y=err_p]{plap-p-rt_2_mesh1.dat};
        \addplot[brown,mark=triangle*,mark options={solid},thick] table[x=meshsize,y=err_p]{plap-p-rt_3_mesh1.dat};
        \addplot[violet,mark=star,mark options={solid},thick] table[x=meshsize,y=err_p]{plap-p-rt_4_mesh1.dat};

        \legend{$k=0$, $k=1$, $k=2$, $k=3$, $k=4$};
        
        \logLogSlopeTriangle{0.875}{0.4}{0.1}{3/4}{black};
        \logLogSlopeTriangle{0.875}{0.4}{0.1}{3/2}{black};
        \logLogSlopeTriangle{0.875}{0.4}{0.1}{9/4}{black};
        \logLogSlopeTriangle{0.875}{0.4}{0.1}{3}{black};
        \logLogSlopeTriangle{0.875}{0.4}{0.1}{15/4}{black};
      \end{loglogaxis}
    \end{tikzpicture}
    \subcaption{Triangular, DSDG}
  \end{minipage}
  \begin{minipage}{0.48\textwidth}\centering
    \begin{tikzpicture}[scale=0.70]
      \begin{loglogaxis}
        \addplot[red,mark=*,mark options={solid},thick] table[x=meshsize,y=err_p]{plap-p_0_mesh1.dat};
        \addplot[blue,mark=square*,mark options={solid},thick] table[x=meshsize,y=err_p]{plap-p_1_mesh1.dat};
        \addplot[green!50!black,mark=diamond*,mark options={solid},thick] table[x=meshsize,y=err_p]{plap-p_2_mesh1.dat};
        \addplot[brown,mark=triangle*,mark options={solid},thick] table[x=meshsize,y=err_p]{plap-p_3_mesh1.dat};
        \addplot[violet,mark=star,mark options={solid},thick] table[x=meshsize,y=err_p]{plap-p_4_mesh1.dat};

        \logLogSlopeTriangle{0.875}{0.4}{0.1}{3/4}{black};
        \logLogSlopeTriangle{0.875}{0.4}{0.1}{3/2}{black};
        \logLogSlopeTriangle{0.875}{0.4}{0.1}{9/4}{black};
        \logLogSlopeTriangle{0.875}{0.4}{0.1}{3}{black};
        \logLogSlopeTriangle{0.875}{0.4}{0.1}{15/4}{black};
      \end{loglogaxis}
    \end{tikzpicture}
    \subcaption{Triangular, HHO}
  \end{minipage}
  \vspace{0.25cm} \\
  \begin{minipage}{0.48\textwidth}\centering
    \begin{tikzpicture}[scale=0.70]
      \begin{loglogaxis}
        \addplot[red,mark=*,mark options={solid},thick] table[x=meshsize,y=err_p]{plap-p-rt_0_mesh2.dat};
        \addplot[blue,mark=square*,mark options={solid},thick] table[x=meshsize,y=err_p]{plap-p-rt_1_mesh2.dat};
        \addplot[green!50!black,mark=diamond*,mark options={solid},thick] table[x=meshsize,y=err_p]{plap-p-rt_2_mesh2.dat};
        \addplot[brown,mark=triangle*,mark options={solid},thick] table[x=meshsize,y=err_p]{plap-p-rt_3_mesh2.dat};
        \addplot[violet,mark=star,mark options={solid},thick] table[x=meshsize,y=err_p]{plap-p-rt_4_mesh2.dat};
        
        \logLogSlopeTriangle{0.875}{0.4}{0.1}{3/4}{black};
        \logLogSlopeTriangle{0.875}{0.4}{0.1}{3/2}{black};
        \logLogSlopeTriangle{0.875}{0.4}{0.1}{9/4}{black};
        \logLogSlopeTriangle{0.875}{0.4}{0.1}{3}{black};
        \logLogSlopeTriangle{0.875}{0.4}{0.1}{15/4}{black};
      \end{loglogaxis}
    \end{tikzpicture}    
    \subcaption{Cartesian, DSGD}
  \end{minipage}
  \begin{minipage}{0.48\textwidth}\centering
    \begin{tikzpicture}[scale=0.70]
      \begin{loglogaxis}
        \addplot[red,mark=*,mark options={solid},thick] table[x=meshsize,y=err_p]{plap-p_0_mesh2.dat};
        \addplot[blue,mark=square*,mark options={solid},thick] table[x=meshsize,y=err_p]{plap-p_1_mesh2.dat};
        \addplot[green!50!black,mark=diamond*,mark options={solid},thick] table[x=meshsize,y=err_p]{plap-p_2_mesh2.dat};
        \addplot[brown,mark=triangle*,mark options={solid},thick] table[x=meshsize,y=err_p]{plap-p_3_mesh2.dat};
        \addplot[violet,mark=star,mark options={solid},thick] table[x=meshsize,y=err_p]{plap-p_4_mesh2.dat};
        
        \logLogSlopeTriangle{0.875}{0.4}{0.1}{3/4}{black};
        \logLogSlopeTriangle{0.875}{0.4}{0.1}{3/2}{black};
        \logLogSlopeTriangle{0.875}{0.4}{0.1}{9/4}{black};
        \logLogSlopeTriangle{0.875}{0.4}{0.1}{3}{black};
        \logLogSlopeTriangle{0.875}{0.4}{0.1}{15/4}{black};
      \end{loglogaxis}
    \end{tikzpicture}    
    \subcaption{Cartesian, HHO}
  \end{minipage}
  \vspace{0.25cm} \\
  \begin{minipage}{0.48\textwidth}\centering
    \begin{tikzpicture}[scale=0.70]
      \begin{loglogaxis}
        \addplot[red,mark=*,mark options={solid},thick] table[x=meshsize,y=err_p]{plap-p-rt_0_pi6_tiltedhexagonal.dat};
        \addplot[blue,mark=square*,mark options={solid},thick] table[x=meshsize,y=err_p]{plap-p-rt_1_pi6_tiltedhexagonal.dat};
        \addplot[green!50!black,mark=diamond*,mark options={solid},thick] table[x=meshsize,y=err_p]{plap-p-rt_2_pi6_tiltedhexagonal.dat};
        \addplot[brown,mark=triangle*,mark options={solid},thick] table[x=meshsize,y=err_p]{plap-p-rt_3_pi6_tiltedhexagonal.dat};
        \addplot[violet,mark=star,mark options={solid},thick] table[x=meshsize,y=err_p]{plap-p-rt_4_pi6_tiltedhexagonal.dat};
        
        \logLogSlopeTriangle{0.875}{0.4}{0.1}{3/4}{black};
        \logLogSlopeTriangle{0.875}{0.4}{0.1}{3/2}{black};
        \logLogSlopeTriangle{0.875}{0.4}{0.1}{9/4}{black};
        \logLogSlopeTriangle{0.875}{0.4}{0.1}{3}{black};
        \logLogSlopeTriangle{0.875}{0.4}{0.1}{15/4}{black};
      \end{loglogaxis}
    \end{tikzpicture}    
    \subcaption{Hexagonal, DSGD}
  \end{minipage}
    \begin{minipage}{0.48\textwidth}\centering
    \begin{tikzpicture}[scale=0.70]
      \begin{loglogaxis}
        \addplot[red,mark=*,mark options={solid},thick] table[x=meshsize,y=err_p]{plap-p_0_pi6_tiltedhexagonal.dat};
        \addplot[blue,mark=square*,mark options={solid},thick] table[x=meshsize,y=err_p]{plap-p_1_pi6_tiltedhexagonal.dat};
        \addplot[green!50!black,mark=diamond*,mark options={solid},thick] table[x=meshsize,y=err_p]{plap-p_2_pi6_tiltedhexagonal.dat};
        \addplot[brown,mark=triangle*,mark options={solid},thick] table[x=meshsize,y=err_p]{plap-p_3_pi6_tiltedhexagonal.dat};
        \addplot[violet,mark=star,mark options={solid},thick] table[x=meshsize,y=err_p]{plap-p_4_pi6_tiltedhexagonal.dat};
        
        \logLogSlopeTriangle{0.875}{0.4}{0.1}{3/4}{black};
        \logLogSlopeTriangle{0.875}{0.4}{0.1}{3/2}{black};
        \logLogSlopeTriangle{0.875}{0.4}{0.1}{9/4}{black};
        \logLogSlopeTriangle{0.875}{0.4}{0.1}{3}{black};
        \logLogSlopeTriangle{0.875}{0.4}{0.1}{15/4}{black};
      \end{loglogaxis}
    \end{tikzpicture}    
    \subcaption{Hexagonal, HHO}
  \end{minipage}
  \vspace{0.25cm} \\
  \begin{minipage}{0.48\textwidth}\centering
    \begin{tikzpicture}[scale=0.70]
      \begin{loglogaxis}
        \addplot[red,mark=*,mark options={solid},thick] table[x=meshsize,y=err_p]{plap-p-rt_0_mesh3.dat};
        \addplot[blue,mark=square*,mark options={solid},thick] table[x=meshsize,y=err_p]{plap-p-rt_1_mesh3.dat};
        \addplot[green!50!black,mark=diamond*,mark options={solid},thick] table[x=meshsize,y=err_p]{plap-p-rt_2_mesh3.dat};
        \addplot[brown,mark=triangle*,mark options={solid},thick] table[x=meshsize,y=err_p]{plap-p-rt_3_mesh3.dat};
        \addplot[violet,mark=star,mark options={solid},thick] table[x=meshsize,y=err_p]{plap-p-rt_4_mesh3.dat};
        
        \logLogSlopeTriangle{0.875}{0.4}{0.1}{3/4}{black};
        \logLogSlopeTriangle{0.875}{0.4}{0.1}{3/2}{black};
        \logLogSlopeTriangle{0.875}{0.4}{0.1}{9/4}{black};
        \logLogSlopeTriangle{0.875}{0.4}{0.1}{3}{black};
        \logLogSlopeTriangle{0.875}{0.4}{0.1}{15/4}{black};
      \end{loglogaxis}
    \end{tikzpicture}    
    \subcaption{Locally refined, DSGD}
  \end{minipage}
    \begin{minipage}{0.48\textwidth}\centering
    \begin{tikzpicture}[scale=0.70]
      \begin{loglogaxis}
        \addplot[red,mark=*,mark options={solid},thick] table[x=meshsize,y=err_p]{plap-p_0_mesh3.dat};
        \addplot[blue,mark=square*,mark options={solid},thick] table[x=meshsize,y=err_p]{plap-p_1_mesh3.dat};
        \addplot[green!50!black,mark=diamond*,mark options={solid},thick] table[x=meshsize,y=err_p]{plap-p_2_mesh3.dat};
        \addplot[brown,mark=triangle*,mark options={solid},thick] table[x=meshsize,y=err_p]{plap-p_3_mesh3.dat};
        \addplot[violet,mark=star,mark options={solid},thick] table[x=meshsize,y=err_p]{plap-p_4_mesh3.dat};
        
        \logLogSlopeTriangle{0.875}{0.4}{0.1}{3/4}{black};
        \logLogSlopeTriangle{0.875}{0.4}{0.1}{3/2}{black};
        \logLogSlopeTriangle{0.875}{0.4}{0.1}{9/4}{black};
        \logLogSlopeTriangle{0.875}{0.4}{0.1}{3}{black};
        \logLogSlopeTriangle{0.875}{0.4}{0.1}{15/4}{black};
      \end{loglogaxis}
    \end{tikzpicture}    
    \subcaption{Locally refined, HHO}
  \end{minipage}
  \caption{Error $\norm[L^p(\Omega)^d]{\Gh[h](\Ih u-\uu)}$ v. $h$, exponential test case, $p=\frac74$.\label{fig:exponential:dsgd}}
\end{figure}
As pointed out in~\cite{Di-Pietro.Droniou:16*1}, the trigonometric solution~\eqref{eq:trigonometric} does not have the required regularity to assess the convergence order of the DSGD method when $1<p<2$.
For this reason, we consider instead the exponential solution
$$
u(\vec{x}) = \exp(x_1 + \pi x_2),
$$
and solve the $p$-Laplace problem with $p=\frac74$, Dirichlet boundary conditions on $\partial\Omega$, and right-hand side $f$ inferred from the expression of $u$.
The error $\norm[L^p(\Omega)^d]{\Gh(\Ih u-\uu)}$ versus the meshsize $h$ is plotted in Figure~\ref{fig:exponential:dsgd} for the mesh families illustrated in Figure~\ref{fig:meshes} and polynomial degrees ranging from 0 to 4.
For the sake of completeness, a comparison with the HHO method~\eqref{eq:plap:hho} is also included.
We observe that the DSGD and HHO methods give similar results in terms of the selected error measure (which accounts only for the consistent part of the gradient, common to both methods).
When including the stabilisation seminorm in the error measure, computations not shown here for the sake of brevity hint to a slightly better accuracy for the HHO method.


\section{Alternative gradient and links with other methods}\label{sec:links}

In this section we discuss an alternative to the gradient reconstruction $\grT$ defined by~\eqref{eq:DSGDM:grD}, and links with other methods.

\subsection{A consistent gradient based on $\rT$}\label{sec:alternate.grad}

An alternative to using $\GT$ in \eqref{eq:DSGDM:grD} would be to use $\GRAD\rT$. This would lead
to a local gradient defined by
\begin{equation}\label{def:grad.rT}
(\grDt[h]\uv)_{|T}=\grTt\uv[T]:=\GRAD\rT \uv[T] + \STt\uv[T]\qquad\forall\uv\in\Uh\,,\;\forall T\in\Th.
\end{equation}
The stabilisation term $\STt$ is still required to satisfy the $L^2$-stability and boundedness property \rprop{S1} and 
the image property \rprop{S3}. As shown in \eqref{eq:norm.equiv} below, the property \rprop{S2}
is required on $\ST\uv[T]$ to ensure that it is orthogonal to the consistent part $\GT\uv[T]$
of $\grT\uv[T]$. For $\grTt\uv[T]$, the consistent part is $\GRAD\rT\uv[T]\in\GRAD\Poly{k+1}(T)$, and the orthogonality
property on $\STt\uv[T]$ can therefore be relaxed into
\begin{enumerate}[\bf ($\widetilde{\bf S2}$)]
\item For all $\uv[T]\in\UT$ and all $\vec{\phi}\in \GRAD \Poly{k+1}(T)$,
$(\STt\uv[T],\vec{\phi})_T=0$,
\end{enumerate}
where, with respect to \rprop{S2}, the space for $\vec{\phi}$ is $\GRAD\Poly{k+1}(T)$ instead of $\Poly{k}(T)^d$.

Performing integration-by-parts and introducing projection operators in \eqref{eq:rT.pde} leads to 
\begin{equation}\label{eq:insp.2}
-(\GRAD\dT[l]\uv[T],\GRAD w)_T=\sum_{F\in\Fh[T]}((\dTF[k]-\dT[l])\uv[T],\GRAD w
\SCAL\normal_{TF})_T\,,\; \forall w\in\Poly{k+1}(T).
\end{equation}
Comparing with \eqref{eq:inspiration.residual}, we see that the volumetric term $\dGT\uv[T]$ has disappeared, and
the only remaining volumetric term $\GRAD\dT[l]\uv[T]$ belongs to $\GRAD\Poly{l}(T)$.
This is a gain with respect to the situation in Section \ref{sec:stab.RT}, in which
the degree of the volumic term was constrained by $\dGT\uv[T]$ to be $k$, whatever the
choice of $l$.
As a consequence, the construction of $\STt$ can be done in a (possibly) smaller space than $\RT(\PTF)$, as detailed in what follows.

First, considering $w=-\dT[l]\uv[T]$ in \eqref{eq:insp.2} and using the trace
inequality \eqref{eq:Lp.trace.discrete} with $p=2$ on $\GRAD \dT[l]\uv[T]$ yields 
(compare with Remark \ref{rem:control.volumic})
\begin{equation}\label{control.volumic.alt}
\norm[L^2(T)^d]{\GRAD\dT[l]\uv[T]}\lesssim\seminorm[2,\partial T]{\uv[T]}.
\end{equation}
Then, following the ideas of Section \ref{sec:lifting}, we construct the lifting
$\LTFt:\UT\to \RT[\max(l,k)](P_{TF})$ such that, for all $\vec{\eta}\in \RT[\max(l,k)](P_{TF})$, 
\begin{equation}\label{eq:RTFt}
  (\LTFt\uv[T],\vec{\eta})_{\PTF}
  = 
  (\GRAD\dT\uv[T],\vec{\eta})_{\PTF}
  + ((\dTF-\dT)\uv[T],\vec{\eta}\SCAL\normal_{TF})_F.
\end{equation}
Since $\GRAD\dT\uv[T]\in \GRAD\Poly{l}(T)\subset \Poly{l-1}(T)^d$
and $(\dTF-\dT)\uv[T]\in \Poly{\max(l,k)}(F)$, following the proof of \rprop{S1}
in Proposition \ref{prop:stabRTN} shows that, with this choice of $\LTFt$, we only need \eqref{def:eta.stab} to hold
for $q\in \Poly{\max(l,k)}(F)$ and $\vec{\chi}\in \Poly{l-1}(T)^d$.
The space $\RT[\max(l,k)](P_{TF})$ enables these choices of $q$ and $\vec{\chi}$,
and the $L^2$-stability of $\STt:=\sum_{F\in\Fh[T]}\LTFt$ therefore follows.
When $l\in\{k-1,k\}$, this stabilisation term is constructed on a piecewise $\RT[k]$ space, instead of a piecewise $\RT$ space for $\ST$ in Section \ref{sec:lifting}.

Although the choice \eqref{def:grad.rT} leads to coercive, consistent, limit-conforming, and
compact families of gradient discretisations, it can turn out to be far from optimal for general problems.
More precisely, its limit-conformity properties are much worse than those of \eqref{eq:DSGDM:grD}.
Indeed, the full orthogonality property \rprop{S2} is essential to establish the
$\mathcal O(h^{k+1})$ estimate \eqref{est:WD} on $\WD[h]$ (see Remark \ref{rem:GT.vs.grad.rT}).
In general, we cannot establish more than an $\mathcal O(h)$ estimate on $\WD[h]$, irrespective of $k$, if
the gradient is reconstructed via \eqref{def:grad.rT}. For anisotropic linear problems, a modification
of $\rT$ embedding a dependence on the diffusion coefficient can be constructed to recover optimal rates of convergence \cite{EDP15}; for fully non-linear models, though, the only option to recover a truly high-order method
seems to be using the gradient $\GT\uv[T]$ in the full polynomial space $\Poly{k}(T)^d$.


\begin{figure}\centering
  \begin{footnotesize}
    \ref{legend:trigonometric.tria:hho}
  \end{footnotesize}
  \vspace{0.25cm} \\
  \begin{minipage}{0.32\textwidth}\centering
    \begin{tikzpicture}[scale=0.60]
      \begin{loglogaxis}[
          legend columns=4,
          legend to name=legend:trigonometric.tria:hho,
          legend style={/tikz/every even column/.append style={column sep=0.35cm}}
        ]
        \addplot[red,mark=*,thick] table[x=meshsize,y=err_p2]{plap_0_mesh1.dat};
        \addplot[red,mark=*,mark options={solid},dashed,thick] table[x=meshsize,y=err_p2]{plap-pt_0_mesh1.dat};          
        \addplot[blue,mark=square*,thick] table[x=meshsize,y=err_p2]{plap_1_mesh1.dat};
        \addplot[blue,mark=square*,mark options={solid},dashed,thick] table[x=meshsize,y=err_p2]{plap-pt_1_mesh1.dat};
        \addplot[green!50!black,mark=diamond*,thick] table[x=meshsize,y=err_p2]{plap_2_mesh1.dat};
        \addplot[green!50!black,mark=diamond*,mark options={solid},dashed,thick] table[x=meshsize,y=err_p2]{plap-pt_2_mesh1.dat};
        \addplot[brown,mark=triangle*,thick] table[x=meshsize,y=err_p2]{plap_3_mesh1.dat};
        \addplot[brown,mark=triangle*,mark options={solid},dashed,thick] table[x=meshsize,y=err_p2]{plap-pt_3_mesh1.dat};
        \addplot[violet,mark=star,thick] table[x=meshsize,y=err_p2]{plap_4_mesh1.dat};
        \addplot[violet,mark=star,mark options={solid},dashed,thick] table[x=meshsize,y=err_p2]{plap-pt_4_mesh1.dat};

        \legend{$k=0$ ($\GT$),$k=0$ ($\GRAD\rT$),%
          $k=1$ ($\GT$),$k=1$ ($\GRAD\rT$),%
          $k=2$ ($\GT$),$k=2$ ($\GRAD\rT$),%
          $k=3$ ($\GT$),$k=3$ ($\GRAD\rT$),%
          $k=4$ ($\GT$),$k=4$ ($\GRAD\rT$)};

        \logLogSlopeTriangle{0.90}{0.3}{0.1}{1}{black};
        \logLogSlopeTriangle{0.90}{0.3}{0.1}{2}{black};
        \logLogSlopeTriangle{0.90}{0.3}{0.1}{3}{black};
        \logLogSlopeTriangle{0.90}{0.3}{0.1}{4}{black};
        \logLogSlopeTriangle{0.90}{0.3}{0.1}{5}{black};
      \end{loglogaxis}
    \end{tikzpicture}
    \subcaption{$p=2$, triangular}
  \end{minipage}  
  \begin{minipage}{0.32\textwidth}\centering
    \begin{tikzpicture}[scale=0.60]
      \begin{loglogaxis}
        \addplot[red,mark=*,thick] table[x=meshsize,y=err_p3]{plap_0_mesh1.dat};
        \addplot[red,mark=*,mark options={solid},dashed,thick] table[x=meshsize,y=err_p3]{plap-pt_0_mesh1.dat};          
        \addplot[blue,mark=square*,thick] table[x=meshsize,y=err_p3]{plap_1_mesh1.dat};
        \addplot[blue,mark=square*,mark options={solid},dashed,thick] table[x=meshsize,y=err_p3]{plap-pt_1_mesh1.dat};
        \addplot[green!50!black,mark=diamond*,thick] table[x=meshsize,y=err_p3]{plap_2_mesh1.dat};
        \addplot[green!50!black,mark=diamond*,mark options={solid},dashed,thick] table[x=meshsize,y=err_p3]{plap-pt_2_mesh1.dat};
        \addplot[brown,mark=triangle*,thick] table[x=meshsize,y=err_p3]{plap_3_mesh1.dat};
        \addplot[brown,mark=triangle*,mark options={solid},dashed,thick] table[x=meshsize,y=err_p3]{plap-pt_3_mesh1.dat};
        \addplot[violet,mark=star,thick] table[x=meshsize,y=err_p3]{plap_4_mesh1.dat};
        \addplot[violet,mark=star,mark options={solid},dashed,thick] table[x=meshsize,y=err_p3]{plap-pt_4_mesh1.dat};

        \logLogSlopeTriangle{0.875}{0.35}{0.1}{1/2}{black};
        \logLogSlopeTriangle{0.875}{0.35}{0.1}{1}{black};
        \logLogSlopeTriangle{0.875}{0.35}{0.1}{3/2}{black};
        \logLogSlopeTriangle{0.875}{0.35}{0.1}{2}{black};
        \logLogSlopeTriangle{0.875}{0.35}{0.1}{5/2}{black};
      \end{loglogaxis}        
    \end{tikzpicture}
    \subcaption{$p=3$, triangular}
  \end{minipage}
  \begin{minipage}{0.32\textwidth}\centering
    \begin{tikzpicture}[scale=0.60]
      \begin{loglogaxis}
        \addplot[red,mark=*,thick] table[x=meshsize,y=err_p4]{plap_0_mesh1.dat};
        \addplot[red,mark=*,mark options={solid},dashed,thick] table[x=meshsize,y=err_p4]{plap-pt_0_mesh1.dat};          
        \addplot[blue,mark=square*,thick] table[x=meshsize,y=err_p4]{plap_1_mesh1.dat};
        \addplot[blue,mark=square*,mark options={solid},dashed,thick] table[x=meshsize,y=err_p4]{plap-pt_1_mesh1.dat};
        \addplot[green!50!black,mark=diamond*,thick] table[x=meshsize,y=err_p4]{plap_2_mesh1.dat};
        \addplot[green!50!black,mark=diamond*,mark options={solid},dashed,thick] table[x=meshsize,y=err_p4]{plap-pt_2_mesh1.dat};
        \addplot[brown,mark=triangle*,thick] table[x=meshsize,y=err_p4]{plap_3_mesh1.dat};
        \addplot[brown,mark=triangle*,mark options={solid},dashed,thick] table[x=meshsize,y=err_p4]{plap-pt_3_mesh1.dat};
        \addplot[violet,mark=star,thick] table[x=meshsize,y=err_p4]{plap_4_mesh1.dat};
        \addplot[violet,mark=star,mark options={solid},dashed,thick] table[x=meshsize,y=err_p4]{plap-pt_4_mesh1.dat};

        \logLogSlopeTriangle{0.875}{0.35}{0.1}{1/3}{black};
        \logLogSlopeTriangle{0.875}{0.35}{0.1}{2/3}{black};
        \logLogSlopeTriangle{0.875}{0.35}{0.1}{1}{black};
        \logLogSlopeTriangle{0.875}{0.35}{0.1}{4/3}{black};
        \logLogSlopeTriangle{0.875}{0.35}{0.1}{5/3}{black};
      \end{loglogaxis}
    \end{tikzpicture}
    \subcaption{$p=4$, triangular}
  \end{minipage}
  \vspace{0.25cm} \\
  \begin{minipage}{0.32\textwidth}\centering
    \begin{tikzpicture}[scale=0.60]
      \begin{loglogaxis}
        \addplot[red,mark=*,thick] table[x=meshsize,y=err_p2]{plap_0_mesh2.dat};
        \addplot[red,mark=*,mark options={solid},dashed,thick] table[x=meshsize,y=err_p2]{plap-pt_0_mesh2.dat};          
        \addplot[blue,mark=square*,thick] table[x=meshsize,y=err_p2]{plap_1_mesh2.dat};
        \addplot[blue,mark=square*,mark options={solid},dashed,thick] table[x=meshsize,y=err_p2]{plap-pt_1_mesh2.dat};
        \addplot[green!50!black,mark=diamond*,thick] table[x=meshsize,y=err_p2]{plap_2_mesh2.dat};
        \addplot[green!50!black,mark=diamond*,mark options={solid},dashed,thick] table[x=meshsize,y=err_p2]{plap-pt_2_mesh2.dat};
        \addplot[brown,mark=triangle*,thick] table[x=meshsize,y=err_p2]{plap_3_mesh2.dat};
        \addplot[brown,mark=triangle*,mark options={solid},dashed,thick] table[x=meshsize,y=err_p2]{plap-pt_3_mesh2.dat};
        \addplot[violet,mark=star,thick] table[x=meshsize,y=err_p2]{plap_4_mesh2.dat};
        \addplot[violet,mark=star,mark options={solid},dashed,thick] table[x=meshsize,y=err_p2]{plap-pt_4_mesh2.dat};

        \logLogSlopeTriangle{0.90}{0.3}{0.1}{1}{black};
        \logLogSlopeTriangle{0.90}{0.3}{0.1}{2}{black};
        \logLogSlopeTriangle{0.90}{0.3}{0.1}{3}{black};
        \logLogSlopeTriangle{0.90}{0.3}{0.1}{4}{black};
        \logLogSlopeTriangle{0.90}{0.3}{0.1}{5}{black};        
      \end{loglogaxis}
    \end{tikzpicture}
    \subcaption{$p=2$, Cartesian}
  \end{minipage}  
  \begin{minipage}{0.32\textwidth}\centering
    \begin{tikzpicture}[scale=0.60]
      \begin{loglogaxis}
        \addplot[red,mark=*,thick] table[x=meshsize,y=err_p3]{plap_0_mesh2.dat};
        \addplot[red,mark=*,mark options={solid},dashed,thick] table[x=meshsize,y=err_p3]{plap-pt_0_mesh2.dat};          
        \addplot[blue,mark=square*,thick] table[x=meshsize,y=err_p3]{plap_1_mesh2.dat};
        \addplot[blue,mark=square*,mark options={solid},dashed,thick] table[x=meshsize,y=err_p3]{plap-pt_1_mesh2.dat};
        \addplot[green!50!black,mark=diamond*,thick] table[x=meshsize,y=err_p3]{plap_2_mesh2.dat};
        \addplot[green!50!black,mark=diamond*,mark options={solid},dashed,thick] table[x=meshsize,y=err_p3]{plap-pt_2_mesh2.dat};
        \addplot[brown,mark=triangle*,thick] table[x=meshsize,y=err_p3]{plap_3_mesh2.dat};
        \addplot[brown,mark=triangle*,mark options={solid},dashed,thick] table[x=meshsize,y=err_p3]{plap-pt_3_mesh2.dat};
        \addplot[violet,mark=star,thick] table[x=meshsize,y=err_p3]{plap_4_mesh2.dat};
        \addplot[violet,mark=star,mark options={solid},dashed,thick] table[x=meshsize,y=err_p3]{plap-pt_4_mesh2.dat};

        \logLogSlopeTriangle{0.875}{0.35}{0.1}{1/2}{black};
        \logLogSlopeTriangle{0.875}{0.35}{0.1}{1}{black};
        \logLogSlopeTriangle{0.875}{0.35}{0.1}{3/2}{black};
        \logLogSlopeTriangle{0.875}{0.35}{0.1}{2}{black};
        \logLogSlopeTriangle{0.875}{0.35}{0.1}{5/2}{black};
      \end{loglogaxis}
    \end{tikzpicture}
    \subcaption{$p=3$, Cartesian}
  \end{minipage}
  \begin{minipage}{0.32\textwidth}\centering
    \begin{tikzpicture}[scale=0.60]
      \begin{loglogaxis}
        \addplot[red,mark=*,thick] table[x=meshsize,y=err_p4]{plap_0_mesh2.dat};
        \addplot[red,mark=*,mark options={solid},dashed,thick] table[x=meshsize,y=err_p4]{plap-pt_0_mesh2.dat};          
        \addplot[blue,mark=square*,thick] table[x=meshsize,y=err_p4]{plap_1_mesh2.dat};
        \addplot[blue,mark=square*,mark options={solid},dashed,thick] table[x=meshsize,y=err_p4]{plap-pt_1_mesh2.dat};
        \addplot[green!50!black,mark=diamond*,thick] table[x=meshsize,y=err_p4]{plap_2_mesh2.dat};
        \addplot[green!50!black,mark=diamond*,mark options={solid},dashed,thick] table[x=meshsize,y=err_p4]{plap-pt_2_mesh2.dat};
        \addplot[brown,mark=triangle*,thick] table[x=meshsize,y=err_p4]{plap_3_mesh2.dat};
        \addplot[brown,mark=triangle*,mark options={solid},dashed,thick] table[x=meshsize,y=err_p4]{plap-pt_3_mesh2.dat};
        \addplot[violet,mark=star,thick] table[x=meshsize,y=err_p4]{plap_4_mesh2.dat};
        \addplot[violet,mark=star,mark options={solid},dashed,thick] table[x=meshsize,y=err_p4]{plap-pt_4_mesh2.dat};

        \logLogSlopeTriangle{0.875}{0.35}{0.1}{1/3}{black};
        \logLogSlopeTriangle{0.875}{0.35}{0.1}{2/3}{black};
        \logLogSlopeTriangle{0.875}{0.35}{0.1}{1}{black};
        \logLogSlopeTriangle{0.875}{0.35}{0.1}{4/3}{black};
        \logLogSlopeTriangle{0.875}{0.35}{0.1}{5/3}{black};       
      \end{loglogaxis}
    \end{tikzpicture}
    \subcaption{$p=4$, Cartesian}
  \end{minipage}        
  \vspace{0.25cm} \\
  \begin{minipage}{0.32\textwidth}\centering
    \begin{tikzpicture}[scale=0.60]
      \begin{loglogaxis}
        \addplot[red,mark=*,thick] table[x=meshsize,y=err_p2]{plap_0_pi6_tiltedhexagonal.dat};
        \addplot[red,mark=*,mark options={solid},dashed,thick] table[x=meshsize,y=err_p2]{plap-pt_0_pi6_tiltedhexagonal.dat};          
        \addplot[blue,mark=square*,thick] table[x=meshsize,y=err_p2]{plap_1_pi6_tiltedhexagonal.dat};
        \addplot[blue,mark=square*,mark options={solid},dashed,thick] table[x=meshsize,y=err_p2]{plap-pt_1_pi6_tiltedhexagonal.dat};
        \addplot[green!50!black,mark=diamond*,thick] table[x=meshsize,y=err_p2]{plap_2_pi6_tiltedhexagonal.dat};
        \addplot[green!50!black,mark=diamond*,mark options={solid},dashed,thick] table[x=meshsize,y=err_p2]{plap-pt_2_pi6_tiltedhexagonal.dat};
        \addplot[brown,mark=triangle*,thick] table[x=meshsize,y=err_p2]{plap_3_pi6_tiltedhexagonal.dat};
        \addplot[brown,mark=triangle*,mark options={solid},dashed,thick] table[x=meshsize,y=err_p2]{plap-pt_3_pi6_tiltedhexagonal.dat};
        \addplot[violet,mark=star,thick] table[x=meshsize,y=err_p2]{plap_4_pi6_tiltedhexagonal.dat};
        \addplot[violet,mark=star,mark options={solid},dashed,thick] table[x=meshsize,y=err_p2]{plap-pt_4_pi6_tiltedhexagonal.dat};

        \logLogSlopeTriangle{0.90}{0.3}{0.1}{1}{black};
        \logLogSlopeTriangle{0.90}{0.3}{0.1}{2}{black};
        \logLogSlopeTriangle{0.90}{0.3}{0.1}{3}{black};
        \logLogSlopeTriangle{0.90}{0.3}{0.1}{4}{black};
        \logLogSlopeTriangle{0.90}{0.3}{0.1}{5}{black};
      \end{loglogaxis}
    \end{tikzpicture}
    \subcaption{$p=2$, hexagonal}
  \end{minipage}  
  \begin{minipage}{0.32\textwidth}\centering
    \begin{tikzpicture}[scale=0.60]
      \begin{loglogaxis}
        \addplot[red,mark=*,thick] table[x=meshsize,y=err_p3]{plap_0_pi6_tiltedhexagonal.dat};
        \addplot[red,mark=*,mark options={solid},dashed,thick] table[x=meshsize,y=err_p3]{plap-pt_0_pi6_tiltedhexagonal.dat};          
        \addplot[blue,mark=square*,thick] table[x=meshsize,y=err_p3]{plap_1_pi6_tiltedhexagonal.dat};
        \addplot[blue,mark=square*,mark options={solid},dashed,thick] table[x=meshsize,y=err_p3]{plap-pt_1_pi6_tiltedhexagonal.dat};
        \addplot[green!50!black,mark=diamond*,thick] table[x=meshsize,y=err_p3]{plap_2_pi6_tiltedhexagonal.dat};
        \addplot[green!50!black,mark=diamond*,mark options={solid},dashed,thick] table[x=meshsize,y=err_p3]{plap-pt_2_pi6_tiltedhexagonal.dat};
        \addplot[brown,mark=triangle*,thick] table[x=meshsize,y=err_p3]{plap_3_pi6_tiltedhexagonal.dat};
        \addplot[brown,mark=triangle*,mark options={solid},dashed,thick] table[x=meshsize,y=err_p3]{plap-pt_3_pi6_tiltedhexagonal.dat};
        \addplot[violet,mark=star,thick] table[x=meshsize,y=err_p3]{plap_4_pi6_tiltedhexagonal.dat};
        \addplot[violet,mark=star,mark options={solid},dashed,thick] table[x=meshsize,y=err_p3]{plap-pt_4_pi6_tiltedhexagonal.dat};

        \logLogSlopeTriangle{0.875}{0.35}{0.1}{1/2}{black};
        \logLogSlopeTriangle{0.875}{0.35}{0.1}{1}{black};
        \logLogSlopeTriangle{0.875}{0.35}{0.1}{3/2}{black};
        \logLogSlopeTriangle{0.875}{0.35}{0.1}{2}{black};
        \logLogSlopeTriangle{0.875}{0.35}{0.1}{5/2}{black};        
      \end{loglogaxis}
    \end{tikzpicture}
    \subcaption{$p=3$, hexagonal}
  \end{minipage}
  \begin{minipage}{0.32\textwidth}\centering
    \begin{tikzpicture}[scale=0.60]
      \begin{loglogaxis}
        \addplot[red,mark=*,thick] table[x=meshsize,y=err_p4]{plap_0_pi6_tiltedhexagonal.dat};
        \addplot[red,mark=*,mark options={solid},dashed,thick] table[x=meshsize,y=err_p4]{plap-pt_0_pi6_tiltedhexagonal.dat};          
        \addplot[blue,mark=square*,thick] table[x=meshsize,y=err_p4]{plap_1_pi6_tiltedhexagonal.dat};
        \addplot[blue,mark=square*,mark options={solid},dashed,thick] table[x=meshsize,y=err_p4]{plap-pt_1_pi6_tiltedhexagonal.dat};
        \addplot[green!50!black,mark=diamond*,thick] table[x=meshsize,y=err_p4]{plap_2_pi6_tiltedhexagonal.dat};
        \addplot[green!50!black,mark=diamond*,mark options={solid},dashed,thick] table[x=meshsize,y=err_p4]{plap-pt_2_pi6_tiltedhexagonal.dat};
        \addplot[brown,mark=triangle*,thick] table[x=meshsize,y=err_p4]{plap_3_pi6_tiltedhexagonal.dat};
        \addplot[brown,mark=triangle*,mark options={solid},dashed,thick] table[x=meshsize,y=err_p4]{plap-pt_3_pi6_tiltedhexagonal.dat};
        \addplot[violet,mark=star,thick] table[x=meshsize,y=err_p4]{plap_4_pi6_tiltedhexagonal.dat};
        \addplot[violet,mark=star,mark options={solid},dashed,thick] table[x=meshsize,y=err_p4]{plap-pt_4_pi6_tiltedhexagonal.dat};     

        \logLogSlopeTriangle{0.875}{0.35}{0.1}{1/3}{black};
        \logLogSlopeTriangle{0.875}{0.35}{0.1}{2/3}{black};
        \logLogSlopeTriangle{0.875}{0.35}{0.1}{1}{black};
        \logLogSlopeTriangle{0.875}{0.35}{0.1}{4/3}{black};
        \logLogSlopeTriangle{0.875}{0.35}{0.1}{5/3}{black};
      \end{loglogaxis}
    \end{tikzpicture}
    \subcaption{$p=4$, hexagonal}
  \end{minipage}
  \vspace{0.25cm} \\
  \begin{minipage}{0.32\textwidth}\centering
    \begin{tikzpicture}[scale=0.60]
      \begin{loglogaxis}
        \addplot[red,mark=*,thick] table[x=meshsize,y=err_p2]{plap_0_mesh3.dat};
        \addplot[red,mark=*,mark options={solid},dashed,thick] table[x=meshsize,y=err_p2]{plap-pt_0_mesh3.dat};          
        \addplot[blue,mark=square*,thick] table[x=meshsize,y=err_p2]{plap_1_mesh3.dat};
        \addplot[blue,mark=square*,mark options={solid},dashed,thick] table[x=meshsize,y=err_p2]{plap-pt_1_mesh3.dat};
        \addplot[green!50!black,mark=diamond*,thick] table[x=meshsize,y=err_p2]{plap_2_mesh3.dat};
        \addplot[green!50!black,mark=diamond*,mark options={solid},dashed,thick] table[x=meshsize,y=err_p2]{plap-pt_2_mesh3.dat};
        \addplot[brown,mark=triangle*,thick] table[x=meshsize,y=err_p2]{plap_3_mesh3.dat};
        \addplot[brown,mark=triangle*,mark options={solid},dashed,thick] table[x=meshsize,y=err_p2]{plap-pt_3_mesh3.dat};
        \addplot[violet,mark=star,thick] table[x=meshsize,y=err_p2]{plap_4_mesh3.dat};
        \addplot[violet,mark=star,mark options={solid},dashed,thick] table[x=meshsize,y=err_p2]{plap-pt_4_mesh3.dat};

        \logLogSlopeTriangle{0.90}{0.3}{0.1}{1}{black};
        \logLogSlopeTriangle{0.90}{0.3}{0.1}{2}{black};
        \logLogSlopeTriangle{0.90}{0.3}{0.1}{3}{black};
        \logLogSlopeTriangle{0.90}{0.3}{0.1}{4}{black};
        \logLogSlopeTriangle{0.90}{0.3}{0.1}{5}{black};
      \end{loglogaxis}
    \end{tikzpicture}
    \subcaption{$p=2$, locally refined}
  \end{minipage}  
  \begin{minipage}{0.32\textwidth}\centering
    \begin{tikzpicture}[scale=0.60]
      \begin{loglogaxis}
        \addplot[red,mark=*,thick] table[x=meshsize,y=err_p3]{plap_0_mesh3.dat};
        \addplot[red,mark=*,mark options={solid},dashed,thick] table[x=meshsize,y=err_p3]{plap-pt_0_mesh3.dat};          
        \addplot[blue,mark=square*,thick] table[x=meshsize,y=err_p3]{plap_1_mesh3.dat};
        \addplot[blue,mark=square*,mark options={solid},dashed,thick] table[x=meshsize,y=err_p3]{plap-pt_1_mesh3.dat};
        \addplot[green!50!black,mark=diamond*,thick] table[x=meshsize,y=err_p3]{plap_2_mesh3.dat};
        \addplot[green!50!black,mark=diamond*,mark options={solid},dashed,thick] table[x=meshsize,y=err_p3]{plap-pt_2_mesh3.dat};
        \addplot[brown,mark=triangle*,thick] table[x=meshsize,y=err_p3]{plap_3_mesh3.dat};
        \addplot[brown,mark=triangle*,mark options={solid},dashed,thick] table[x=meshsize,y=err_p3]{plap-pt_3_mesh3.dat};
        \addplot[violet,mark=star,thick] table[x=meshsize,y=err_p3]{plap_4_mesh3.dat};
        \addplot[violet,mark=star,mark options={solid},dashed,thick] table[x=meshsize,y=err_p3]{plap-pt_4_mesh3.dat};

        \logLogSlopeTriangle{0.875}{0.35}{0.1}{1/2}{black};
        \logLogSlopeTriangle{0.875}{0.35}{0.1}{1}{black};
        \logLogSlopeTriangle{0.875}{0.35}{0.1}{3/2}{black};
        \logLogSlopeTriangle{0.875}{0.35}{0.1}{2}{black};
        \logLogSlopeTriangle{0.875}{0.35}{0.1}{5/2}{black};        
      \end{loglogaxis}
    \end{tikzpicture}
    \subcaption{$p=3$, locally refined}
  \end{minipage}
  \begin{minipage}{0.32\textwidth}\centering
    \begin{tikzpicture}[scale=0.60]
      \begin{loglogaxis}
        \addplot[red,mark=*,thick] table[x=meshsize,y=err_p4]{plap_0_mesh3.dat};
        \addplot[red,mark=*,mark options={solid},dashed,thick] table[x=meshsize,y=err_p4]{plap-pt_0_mesh3.dat};          
        \addplot[blue,mark=square*,thick] table[x=meshsize,y=err_p4]{plap_1_mesh3.dat};
        \addplot[blue,mark=square*,mark options={solid},dashed,thick] table[x=meshsize,y=err_p4]{plap-pt_1_mesh3.dat};
        \addplot[green!50!black,mark=diamond*,thick] table[x=meshsize,y=err_p4]{plap_2_mesh3.dat};
        \addplot[green!50!black,mark=diamond*,mark options={solid},dashed,thick] table[x=meshsize,y=err_p4]{plap-pt_2_mesh3.dat};
        \addplot[brown,mark=triangle*,thick] table[x=meshsize,y=err_p4]{plap_3_mesh3.dat};
        \addplot[brown,mark=triangle*,mark options={solid},dashed,thick] table[x=meshsize,y=err_p4]{plap-pt_3_mesh3.dat};
        \addplot[violet,mark=star,thick] table[x=meshsize,y=err_p4]{plap_4_mesh3.dat};
        \addplot[violet,mark=star,mark options={solid},dashed,thick] table[x=meshsize,y=err_p4]{plap-pt_4_mesh3.dat};

        \logLogSlopeTriangle{0.875}{0.35}{0.1}{1/3}{black};
        \logLogSlopeTriangle{0.875}{0.35}{0.1}{2/3}{black};
        \logLogSlopeTriangle{0.875}{0.35}{0.1}{1}{black};
        \logLogSlopeTriangle{0.875}{0.35}{0.1}{4/3}{black};
        \logLogSlopeTriangle{0.875}{0.35}{0.1}{5/3}{black};
      \end{loglogaxis}
    \end{tikzpicture}
    \subcaption{$p=4$, locally refined}
  \end{minipage}      
  \caption{$\norm[L^p(\Omega)^d]{\Gh(\Ih u-\uu[h])}$ v. $h$. Trigonometric test case, $p\in\{2,3,4\}$, HHO.\label{fig:trigonometric:hho}}
\end{figure}
To illustrate numerically the loss of convergence experienced when using the gradient reconstruction~\eqref{def:grad.rT} in the context of fully non-linear problems, we solve the problem described in Section~\ref{sec:numerical.examples:trigonometric} using two numerical methods: the HHO method of~\cite{Di-Pietro.Droniou:16} (see~\eqref{eq:plap:hho} below), and the method obtained from the latter replacing $\GT$ by $\nabla\rT$.
We report in Figure \ref{fig:trigonometric:hho} the error $\norm[1,p,h]{\Ih u-\uu}$ versus the meshsize $h$, with reference slopes corresponding to the estimates of convergence rates derived in~\cite[Theorem~3.2]{Di-Pietro.Droniou:16*1}.
The leftmost column, corresponding to the Poisson problem with $p=2$, shows that both $\GT$ and $\nabla\rT$ can be used when $\WD[h]$ is applied to $\vec{\psi}=\GRAD u$.
For $p\in\{3,4\}$, on the other hand, a significant loss in the convergence rate is observed for $k>1$ (the dashed line corresponding to $\GRAD\rT$ departs from the solid line corresponding to $\GT$).
Notice that, for $k\ge3$ and $p=3$, the order of convergence is again limited by the regularity of the function $\vec{\psi}\mapsto|\vec{\psi}|^{p-2}\vec{\psi}$.
The results presented here replace the ones of~\cite[Figure 3]{Di-Pietro.Droniou:16*1}, which were affected by a bug in one of the libraries used in our code.
From these new tests, the error estimates of~\cite[Theorem~3.2]{Di-Pietro.Droniou:16*1} appear to be sharp also for $p>2$.

\subsection{Hybrid High-Order methods}

The HHO method proposed in~\cite{Di-Pietro.Ern.ea:14} for problem~\eqref{lin.ell.w} with $\matr{\Lambda}=\matr{I}_d$ reads
\begin{equation}\label{eq:hho}
\text{Find $\uu\in\Uhz$ such that, for all $\uv\in\Uhz$, $a_h^{\rm hho}(\uu,\uv)=(f,v_h)$,}
\end{equation}
where the broken polynomial function $v_h$ is defined by \eqref{eq:vh}, and the bilinear form $a_h^{\rm hho}:\Uh\times\Uh\to\Real$ is assembled from the elementary contributions
\begin{equation}\label{aT.hho}
a_T^{\rm hho}(\uu[T],\uv[T])\coloneq (\GRAD\rT\uu[T],\GRAD\rT\uv[T])_T
+ \sum_{F\in\Fh[T]}h_F^{-1}((\dTF-\dT)\uu[T],(\dTF-\dT)\uv[T])_F.
\end{equation}
Here, the consistent gradient is $\GRAD\rT$, as in \eqref{def:grad.rT},
and the stabilisation is not incorporated in the gradient reconstruction, but rather added as a separate term in the bilinear form.

For the non-linear problem~\eqref{eq:plap:weak}, on the other hand, the HHO method considered in~\cite{Di-Pietro.Droniou:16,Di-Pietro.Droniou:16*1} reads
$$
\text{Find $\uu\in\Uhz$ such that, for all $\uv\in\Uhz$, $A_h^{\rm hho}(\uu,\uv)=(f,v_h)$,}
$$
with function $A_h^{\rm hho}:\Uh\times\Uh\to\Real$ assembled from the elementary contributions
\begin{multline}\label{eq:plap:hho}
  A_T^{\rm hho}(\uu[T],\uv[T])\coloneq
  \\
  \int_T\vec{\sigma}(\GT\uu[T])\SCAL\GT\uv[T]
  + \sum_{F\in\Fh[T]}h_F^{p-1}\int_F|(\dTF-\dT)\uu[T]|^{p-2}(\dTF-\dT)\uu[T](\dTF-\dT)\uv[T].
\end{multline}
Also in this case, stability is achieved by a separate term, and only the consistent (but not stable) gradient
reconstruction appears in the consistency term.
Notice that, unlike the linear case, the gradient reconstructed in the full polynomial space $\Poly{k}(T)^d$ is present here; see comments at the end of Section \ref{sec:alternate.grad}.

\subsection{High-order non-conforming Mimetic Finite Difference}\label{sec:MFD}

The ncMFD method of \cite{Lipnikov-Manzini:2014} hinges on degrees of freedom (DOFs) that
are the polynomial moments of degree up to $(k-1)$ inside the mesh elements and the polynomial moments of degree up to $k$ on the mesh faces.
$X_h$ is the space of vectors gathering such DOFs.
Given a function $v\in H^1(\Omega)$, we denote by $\vsI$ its interpolation in $X_h$, that is,
the vector collecting its moments in the elements and on the faces.
If $\vmfd\in X_h$, we denote by $v_T\in\Poly{k-1}(T)$, $T\in\Th$, and $v_F\in\Poly{k}(F)$, $F\in\Fh$, the polynomials reconstructed from the moments represented by $\vmfd$.
This defines an isomorphism
\begin{equation}\label{iso.ncMFD.U}
\vmfd\in X_h\mapsto \uv[h]=((v_T)_{T\in\Th},(v_F)_{F\in\Fh})\in\Uh[k,k-1].
\end{equation}
Denoting by $\vmfd_{|T}$ the sub-vector made of the DOFs of $\vmfd\in X_h$ in the mesh element $T\in\Th$ and on the mesh faces in $\Fh[T]$, the ncMFD method for problem \eqref{lin.ell.w} with $\matr{\Lambda}=\matr{I}_d$ reads
\begin{equation}\label{def:ncMFD}
\mbox{Find $\umfd\in X_h$ such that, for all $\vmfd\in X_h$, }\sum_{T\in\Th}\umfd_{|T}^t \mathsf{M}_T\vmfd_{|T}=
\sum_{T\in\Th}L_f(\vmfd_{|T}),
\end{equation}
where $L_f(\vmfd_T)$ is a discretisation of $\int_T f v$, and
the matrix $\mathsf{M}_T$ is positive semi-definite with suitable consistency and stability
properties. Setting $\Ndk={\rm dim}(\Poly{k-1}(T))$ and selecting a basis
$(q_i)_{i=0,\ldots,\Ndk-1}$ of $\Poly{k-1}(T)$ with $q_0=1$,
the required consistency and stability properties on $\mathsf{M}_T$ lead to
the following decomposition (see \cite[Eq. (35)]{Lipnikov-Manzini:2014}):
\[
\mathsf{M}_T=\mathsf{M}_T^0+\mathsf{M}_T^1=
\widehat{\mathsf{R}}_T(\widehat{\mathsf{R}}_T^t\widehat{\mathsf{N}})^{-1}\widehat{\mathsf{R}}_T^t+\mathsf{M}_T^1
\]
where $\mathsf{A}^t$ is the transpose of $\mathsf{A}$,
$\ker(\mathsf{M}_T^1)=\{(\vsI)_{|T}\,:\, v\in \Poly{k-1}(T)\}$, $\widehat{\mathsf{N}}$ has
columns $\widehat{\mathsf{N}}_i=(q_i)^{\INTP}$ for $i=1,\ldots,\Ndk-1$, and
$\widehat{\mathsf{R}}_T$ is the matrix with columns $(\widehat{\mathsf{R}}_{T,i})_{i=1,\ldots,\Ndk}$ defined by
\begin{equation}\label{MFD:def.R}
\forall \vmfd\in X_h\,,\;(\widehat{\mathsf{R}}_{T,i})^t \vmfd_{|T}=
-(v_T,\Delta q_i)+\sum_{F\in\Fh[T]}(v_F,\GRAD q_i\SCAL\normal_{TF})_F.
\end{equation}
The stabilising matrix $\mathsf{M}_T^1$ does not have any impact on the consistency of the method;
its sole role is to stabilise the matrix $\mathsf{M}$ so that its kernel is
$\Real 1^{\INTP}$ (which is expected: a matrix $\mathsf{M}_T$ representing the bilinear form
$\int_T \GRAD u\cdot\GRAD v$
  should vanish on interpolants of constant functions).
  The matrix $\mathsf{M}_T^0$, on the other hand, contains all the consistency
  properties of the method.
  The analysis of the stabilisation part is made in Section \ref{sec:generality.ST}, alongside the analysis of the stabilisation in the HHO and ncVEM methods.

Let us analyse here the consistent part $\mathsf{M}_T^0$ of $\mathsf{M}_T$.
Take $\vmfd\in X_h$, and let $\uv[h]\in \Uh[k,k-1]$
corresponding to $\vmfd$ through the isomorphism \eqref{iso.ncMFD.U}.
Comparing \eqref{eq:rT.pde} and \eqref{MFD:def.R} shows that
\[
(\GRAD\rT\uv[T],\GRAD q_i)_T=(\widehat{\mathsf{R}}_{T,i})^t\vmfd_{|T}=(\widehat{\mathsf{R}}_T^t\vmfd_{|T})_i
\,,\;\forall i=1,\ldots,\Ndk-1.
\]
Hence, with obvious notations,
\begin{align}
\vmfd_{|T}^t \mathsf{M}^0_T\wmfd_{|T}={}&(\widehat{\mathsf{R}}_T^t \vmfd_{|T})^t
(\widehat{\mathsf{R}}_T^t\widehat{\mathsf{N}})^{-1}(\widehat{\mathsf{R}}_T^t \wmfd_{|T})\nonumber\\
={}& [(\GRAD\rT\uv[T],\GRAD q_i)_T]_{i=1,\ldots,\Ndk-1}^t
(\widehat{\mathsf{R}}_T^t\widehat{\mathsf{N}})^{-1}
[(\GRAD\rT\uw[T],\GRAD q_i)_T]_{i=1,\ldots,\Ndk-1}.
\label{ncMFD.gram}\end{align}
Applying \eqref{MFD:def.R} to $\vmfd_{|T}=(q_j)^{\INTP}$ and integrating by parts shows that
$(\widehat{\mathsf{R}}_T^t\widehat{\mathsf{N}})_{ij}=(\GRAD q_i,\GRAD q_j)_T$, that is,
$\widehat{\mathsf{R}}_T^t\widehat{\mathsf{N}}$ is the Gram matrix of $(\GRAD q_i)_{i=1,\ldots,\Ndk-1}$ in $L^2(T)^d$.
Equation \eqref{ncMFD.gram} can therefore be re-written as
\[
\vmfd_{|T}^t \mathsf{M}_T^0\wmfd_{|T}= (\GRAD\rT\uv[T],\GRAD \rT\uw[T])_T.
\]
Thus, recalling the isomorphism \eqref{iso.ncMFD.U} between $X_h$ and $\Uh[k,k-1]$,
the ncMFD method \eqref{def:ncMFD} is equivalent to
\begin{equation*}
\mbox{Find $\uu\in \Uh[k,k-1]$ such that, for all $\uv\in \Uh[k,k-1]$, }
\sum_{T\in\Th}(\GRAD\rT\uu[T],\GRAD \rT\uv[T])_T+s_T^{\rm mfd}(\umfd,\vmfd)
=\sum_{T\in\Th}L_f(\vmfd_{|T}),
\end{equation*}
where $s_T^{\rm mfd}$ is the stabilising bilinear form assembled from the
local matrices $(\mathsf{M}_T^1)_{T\in\Th}$. Here, the consistent part is
constructed from $\GRAD \rT$, as in \eqref{def:grad.rT}, but the stabilisation
is external to the gradient.
In the ncMFD method, the loading term is discretised (for $k\ge 2$) by
\begin{equation}\label{ncMFD:L}
  L_f(\vmfd_{|T})=\int_T \lproj{k-1}(f)v_T=\int_T fv_T.
\end{equation}
A modification of \eqref{ncMFD:L} is necessary for the low orders $k=0,1$ -- see \cite[Section 2.7]{Lipnikov-Manzini:2014}.


\subsection{Non-conforming Virtual Element Method}\label{sec:ncVEM}
For a given mesh element $T\in\Th$, we define the local non-conforming
virtual element space as follows~\cite{Ayuso-de-Dios.Lipnikov.ea:16}:
\begin{equation}\label{eq:VhkT}
  \Vhk(T) \coloneq 
  \left\{
    \vh\in H^1(T)\,:\,\normal_{TF}\SCAL\GRAD {\vh}_{|_{F}}\in\Poly{k}(F)\quad\forall F\in\Fh[T],
    \quad\Delta\vh\in\Poly{k-1}(T)
  \right\}.
\end{equation}
This space is finite dimensional and its functions are described by
their face moments against the polynomials of degree up to $k$ on each
face $F\in\Fh[T]$ and the cell moments against polynomials of degree
up to $k-1$ inside cell $T$.
The unisolvency of these degrees of freedom has been proved
in~\cite{Ayuso-de-Dios.Lipnikov.ea:16}.

The \emph{global non-conforming virtual element space of degree $k$}
is given by:
\begin{equation}
\begin{aligned}
  \Vhkz(\Omega) \coloneq \bigg\{
  \vh\in L^2(\Omega)\st{}&
  \int_{F}\jump{\vh}q=0\quad\forall F\in\Fh\,,\;\forall q\in\Poly{k}(F)\\
  &\mbox{ and }{\vh}_{|T}\in\Vhk(T)\quad\forall T\in\Th
  \bigg\},
\end{aligned}
\end{equation}
where $\jump{\vh}$ denotes the \emph{jump operator} with the usual
definition at interfaces (the sign is not relevant), and extended to
boundary faces setting $\jump{\vh}\coloneq\vh$.

The polynomials of degree up to $(k+1)$ are a subspace of $\Vhk(T)$.
The jump conditions on the elements of $\Vhkz(\Omega)$ ensure that
$\Ih[k,k-1]:\Vhkz(\Omega)\to \Uhz[k,k-1]$ is well defined (there is only
one interpolant on each face, and the polynomial moments up to degree $k$ of functions in $\Vhkz(\Omega)$ vanish on each $F\in\Fhb$). Moreover, the unisolvent property of
the set of DOFs shows that $\Ih[k,k-1]$ is an isomorphism between
$\Vhkz(\Omega)$ and $\Uhz[k,k-1]$.

\medskip
The virtual element discretisation of problem~\eqref{lin.ell.w} with
$\vec{\Lambda}={\rm Id}$ reads as:
\begin{equation}
  \mbox{Find $\uh\in\Vhkz(\Omega)$ such that, for all $\vh\in\Vhkz(\Omega)$, }
  a_h(\uh,\vh)=(f_h,\vh)
\end{equation}
where the bilinear form $a_{h}(\uh,\vh)$ approximates the left-hand
side of~\eqref{lin.ell.w} and ${f_{h}}_{|T}=\lproj{k-1}f$.

Mimicking the additivity of integrals, we assume that the virtual
element bilinear form is the summation of local elemental terms
\begin{equation}
  a_{h}(\uh,\vh) = \sum_{T\in\Th}a_{h,T}(\uh,\vh).
\end{equation}
Two different formulations of the local bilinear form $a_{h,T}$ can be found in
the literature, both including a consistency and a
stability term:
\begin{description}
\item[--] \emph{first formulation}~\cite{Ayuso-de-Dios.Lipnikov.ea:16}:
  for every $\uh$, $\vh\in\Vhk(T)$:
  \begin{equation}\label{eq:ah.vem:second}
    a_{h,T}(\uh,\vh) \coloneq ( \GRAD\eproj[T]{k+1}\uh, \GRAD\eproj[T]{k+1}\vh )_T
    + \mathcal{S}\big( ( {\rm Id}-\eproj[T]{k+1} )\uh, ( {\rm Id}-\eproj[T]{k+1} )\vh \big);
  \end{equation}
\item[--] \emph{second formulation}~\cite{Cangiani.Manzini.Sutton:16,Cangiani.Gyrya.Manzini:16}:
  for every $\uh$, $\vh\in\Vhk(T)$:
  \begin{equation}\label{eq:ah.vem:first}
    a_{h,T}(\uh,\vh) \coloneq ( \vlproj{k}\GRAD\uh, \vlproj{k}\GRAD\vh )_T
    + \mathcal{S}\big( ( {\rm Id}-\eproj[T]{k+1} )\uh, ( {\rm Id}-\eproj[T]{k+1} )\vh \big).
   \end{equation}
\end{description}
In these definitions, the first term on the right is the consistency
term designed to provide the exactness of the integration whenever at
the least one of the entries $\mathfrak{u}_h$ or $\mathfrak{v}_h$ is a
polynomial of degree up to $(k+1)$.
The second term is a stabilisation, and $\mathcal{S}$ can be any
symmetric and positive definite bilinear form for which there exist
two positive constants $c_*$ and $c^*$ such that
\begin{equation}\label{ncVEM:stab}
  c_*\norm[L^2(T)^d]{\GRAD \vh}^2\leq\mathcal{S}(\vh,\vh)\leq c^*\norm[L^2(T)^d]{\GRAD \vh}^2
  \quad\forall\vh\in\Vhk(T)\textrm{~such~that~}\eproj{k+1}\vh=0.
\end{equation}

\begin{remark}
  The connection between formulation~\eqref{eq:ah.vem:second} and the
  ncMFD method of~\cite{Lipnikov-Manzini:2014} reviewed in
  Section~\ref{sec:MFD} has been established
  in~\cite{Ayuso-de-Dios.Lipnikov.ea:16}.
\end{remark}

\subsection{HHO, ncMFD and ncVEM are gradient discretisation methods}\label{sec:generality.ST}

In the previous sections, we showed that the consistent gradient in the HHO, ncMFD and ncVEM
methods (first formulation) is, for the Poisson problem, identical to the consistent gradient in \eqref{def:grad.rT}.
We show here that the stabilisations used in these methods can actually be represented
by well-chosen stabilisation terms $\ST$ satisfying \rprop{S1}--\rprop{S3}, and thus that these methods are gradient discretisation methods. 

Following the discussion in the previous sections, 
for the Poisson problem the HHO, ncMFD and ncVEM (first formulation) can be written,
upon an isomorphism of the space of discrete unknowns and with the proper choice of $l\in \{k-1,k,k+1\}$:
Find $\uu\in\Uhz$ such that, for all $\uv\in\Uhz$,
\begin{equation}\label{eq:hho.mfd.vem}
\sum_{T\in\Th}(\GRAD\rT\uu[T],\GRAD\rT\uv[T])_T
+\sum_{T\in\Th}\mathrm{Stab}_T(\uu[T]-\IT\rT\uu[T],\uv[T]-\IT\rT\uv[T])
=\sum_{T\in\Th} L_{T,f}(\uv[T]),
\end{equation}
where $L_{T,f}:\UT\to \Real$ is a specific linear form and
$\mathrm{Stab}_T$ is a symmetric bilinear form on $\UT$ that is coercive and stable
on ${\rm Im}({\rm Id}-\IT\rT)$, that is,
\begin{equation}\label{VEM.coer}
  \forall \uv[T]\in\UT\,,\;
  \mathrm{Stab}_T(\uv[T]-\IT\rT\uv[T],\uv[T]-\IT\rT\uv[T])\simeq \seminorm[2,\partial T]{\uv[T]}^2,
\end{equation}
where $a\simeq b$ means $C a\le b\le C^{-1} a$ with real number $C>0$ independent of $h$ and of $T$.

\begin{remark}[Stabilisation term]
For the HHO scheme \eqref{aT.hho}, by \eqref{eq:dT.dTF'} the vector 
$((\dTF[k]-\dT[l])\uv[T])_{F\in\Fh[T]}$ is the difference of the face- and cell-unknowns of $\uv[T]-\IT\rT\uv[T]$. Hence, the definition
\[
  {\rm Stab}_T(\uu[T]-\IT\rT\uu[T],\uv[T]-\IT\rT\uv[T])
  \coloneq
  \sum_{F\in\Fh[T]}h_F^{-1}((\dTF[k]-\dT[l])\uu[T],(\dTF[k]-\dT[l])\uv[T])_F
\]
is valid and ensures that the stabilisation terms in \eqref{aT.hho} and \eqref{eq:hho.mfd.vem}
coincide. This also shows that ${\rm Stab}_T(\uv[T]-\IT\rT\uv[T],\uv[T]-\IT\rT\uv[T])=\seminorm[2,\partial T]{\uv[T]}^2$, and thus that \eqref{VEM.coer} holds.

For the ncVEM version (any formulation), comparing the stabilisations
in \eqref{eq:ah.vem:first}--\eqref{eq:ah.vem:second} and in \eqref{eq:hho.mfd.vem} leads to defining,
for all $\uh,\vh\in \Vhk(T)$ and setting $\uu[T]=\IT\uh[h|T]$ and $\uv[T]=\IT\vh[h|T]$,
\[
	{\rm Stab}_T(\uu[T]-\IT\rT\uu[T],\uv[T]-\IT\rT\uv[T])
	\coloneq
	\mathcal{S}\big( ( {\rm Id}-\eproj{k+1} )\uh, ( {\rm Id}-\eproj{k+1} )\vh \big).
\]
Estimate \eqref{VEM.coer} then follows from \eqref{ncVEM:stab} and from \eqref{eq:ncVEM.norm.2}
in Lemma \ref{lem:ncVEM.norm}.
\end{remark}

Due to the orthogonality condition \rprop{S2},
the GS \eqref{lin.ell.gs} for Problem \eqref{lin.ell} with $\matr{\Lambda}=\matr{I}_d$,
based on the gradient reconstructions \eqref{def:grad.rT} (with $\STt=\ST$) and some
function reconstruction $\widetilde{\Pi}_{\GD[h]}$, is given by
\begin{equation}\label{eq:gs.hho.mfd.vem}
\sum_{T\in\Th}(\GRAD\rT\uu[T],\GRAD\rT\uv[T])_T
+\sum_{T\in\Th}(\ST\uu[T],\ST\uv[T])_T=\int_\Omega f \widetilde{\Pi}_{\GD[h]} \uv.
\end{equation}
For all methods except the ncMFD with $k=1$, accounting for~\eqref{eq:vT.l<0} when $k=0$ and $l=-1$, we have $L_{f,T}(\uv[T])=(f,v_T)_T$
and thus the choice $\widetilde{\Pi}_{\GD[h]}=\Pi_{\GD[h]}$ defined in \eqref{eq:DSGDM:XDz.PiD}
ensures that the right-hand sides of \eqref{eq:hho.mfd.vem} and \eqref{eq:gs.hho.mfd.vem} coincide.
For the ncMFD with $k=1$, a slightly different discretisation of the right-hand side has to be considered in order to ensure optimal $L^2$-error estimates under elliptic regularity; see \cite[Section 2.7]{Lipnikov-Manzini:2014} for further details.

To prove that \eqref{eq:hho.mfd.vem} can be written as \eqref{eq:gs.hho.mfd.vem},
it remains to show that for any stabilisation $\mathrm{Stab}_T$ as above, there exists
$\ST$ satisfying \rprop{S1}--\rprop{S3} and
such that
\begin{equation}\label{stab.equiv}
\forall \uu[T],\uv[T]\in \UT\,,\;(\ST\uu[T],\ST\uv[T])_T=\mathrm{Stab}_T(\uu[T]-\IT\rT\uu[T],\uv[T]-\IT\rT\uv[T]).
\end{equation}
Let us fix an initial $\ST[0]$ satisfying the design properties (for example,
the stabilisation defined by \eqref{eq:ST.RT}). The property \rprop{S1} on $\ST[0]$
and Lemma \ref{lem:norm.interp} in Appendix \ref{sec:equiv.seminorms} below show that $\ST[0]:\UT\to {\rm Im}(\ST[0])$ and $({\rm Id}-\IT\rT):\UT\to{\rm Im}({\rm Id}-\IT\rT)$ have the same kernel.
As shown by \eqref{VEM.coer} and Lemma \ref{lem:norm.interp}, ${\rm Stab}_T$ is an inner product on ${\rm Im}({\rm Id}-\IT\rT)$. Applying thus
\cite[Lemma A.3]{Droniou.Eymard.ea:10} produces an inner product $\langle\cdot,\cdot\rangle_T$ on ${\rm  Im}(\ST[0])$
such that 
\begin{equation}\label{equiv.stab.1}
\forall \uu[T],\uv[T]\in \UT\,,\;
\langle\ST[0]\uu[T],\ST[0]\uv[T]\rangle_T
=
\mathrm{Stab}_T(\uu[T]-\IT\rT\uu[T],\uv[T]-\IT\rT\uv[T]).
\end{equation}
Using then \cite[Lemma 5.2]{Droniou_et_al:13} with the inner products $\langle\cdot,\cdot\rangle_T$
and $(\cdot,\cdot)_T$ on ${\rm Im}(\ST[0])$, we find an isomorphism $\mathcal L_T$ of ${\rm Im}(\ST[0])$
such that
\begin{equation}\label{equiv.stab.2}
\forall \uu[T],\uv[T]\in \UT\,,\;
\langle\ST[0]\uu[T],\ST[0]\uv[T]\rangle_T=(\mathcal L_T\ST[0]\uu[T],\mathcal L_T\ST[0]\uv[T])_T.
\end{equation}
Combining \eqref{equiv.stab.1} and \eqref{equiv.stab.2} shows that \eqref{stab.equiv}
holds with
$$
\ST=\mathcal L_T\ST[0].
$$ 
The proof that $\ST$ defined above satisfies the design properties is easy.
The $L^2$-stability and boundedness \rprop{S1} is a direct consequence of
\eqref{stab.equiv} with $\uv[T]=\uu[T]$ and of \eqref{VEM.coer}.
The image of $\ST[0]$ is, by assumption, $L^2(T)^d$-orthogonal to $\Poly{k}(T)^d$ and contained in some $\Poly{k_S}(\mathcal{P}_T)$.
Since $\mathcal L_T$ is an isomorphism of ${\rm Im}(\ST[0])$, we have ${\rm Im}(\ST)={\rm Im}(\ST[0])$
and Properties \rprop{S2} and \rprop{S3} on $\ST$ therefore follow.

\begin{remark}[Formulations based on $\GT$]
With minor modifications, it is easy to construct a stabilisation $\ST$ that satisfies \textbf{($\widetilde{\bf S2}$)} instead of
\rprop{S2}.
This can be done at a reduced cost, as discussed in Section \ref{sec:alternate.grad}.
The reasoning above can also be easily adapted to the first formulation of ncVEM, provided
that the terms $\GRAD\rT$ in \eqref{eq:hho.mfd.vem} are replaced with $\GT$.
\end{remark}


\appendix

\section{Proofs of the results on DSGDs}\label{sec:proofs}

This section contains the proofs of Theorem~\ref{thm:DSGDM} and Proposition~\ref{prop:est.SD.WD} preceded by some preliminary results: the study of the properties of stabilising contributions satisfying \rprop{S1}--\rprop{S3} and uniform equivalences of discrete $W^{1,p}$-seminorms.
We also include lemmas used in Section \ref{sec:links} to show that the HHO method, ncMFD method and ncVEM are GDMs.

\subsection{Properties of the stabilising contribution}

\begin{proposition}[Properties of $\ST$]\label{prop:ST}
  Let $\{\ST\st T\in\Th\}$ be a family of stabilising contributions satisfying assumptions \rprop{S1}--\rprop{S3}.
  Then, the following properties hold:
  \begin{enumerate}[(i)]
  \item \emph{$L^p$-stability and boundedness.} For all $T\in\Th$ and all $\uv[T]\in\UT$,
    \begin{equation}\label{eq:ST:stability:p}
      \norm[L^p(T)^d]{\ST\uv[T]}\simeq\seminorm[p,\partial T]{\uv[T]},
    \end{equation}
    with hidden constant as in~\eqref{eq:ST:stability} and additionally depending on $p$ and $k_{\rm S}$.
  \item \emph{Consistency.} For all $T\in\Th$ and all $v\in W^{k+2,p}(T)$,
  \begin{equation}\label{eq:ST:consistency}
    \norm[L^p(T)^d]{\ST\IT v}\lesssim h_T^{k+1}\seminorm[W^{k+2,p}(T)]{v},
  \end{equation}
  where $a\lesssim b$ means $a\le Cb$ with real number $C>0$ independent of both $h$ and $T$, but possibly depending on $d$, $\varrho$, $k$, $l$, $p$ and $k_{\rm S}$.
  \end{enumerate}
  As a consequence of~\eqref{eq:ST:consistency}, if $v\in \Poly{k+1}(T)$, then $\ST\IT v=0$.
\end{proposition}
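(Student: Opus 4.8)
The plan is to establish parts (i) and (ii) in that order, the final assertion being then an immediate corollary of~\eqref{eq:ST:consistency}.

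\emph{Part (i).} The starting point is the $L^2$-equivalence~\eqref{eq:ST:stability} granted by \rprop{S1}; the task is to transfer it to $L^p$, and this is precisely where \rprop{S3} is needed. Since $\ST\uv[T]$ lies in a vector-valued broken polynomial space of fixed degree $k_{\rm S}$ over the partition $\mathcal{P}_T$, on each piece $P\in\mathcal{P}_T$ all norms are equivalent with constants depending only on $d$, $p$, $\varrho$, $k_{\rm S}$, while~\eqref{eq:S3.cond} and mesh regularity give $|P|\simeq h_P^d\simeq h_T^d$. Summing over the $\card\mathcal{P}_T\lesssim 1$ pieces (and switching freely between $\ell^p$ and $\ell^2$ discrete norms) I would obtain $\norm[L^2(T)^d]{\ST\uv[T]}\simeq h_T^{d(\frac12-\frac1p)}\norm[L^p(T)^d]{\ST\uv[T]}$. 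An entirely analogous argument on the boundary, using that $(\dTF-\dT)\uv[T]$ restricts on each $F\in\Fh[T]$ to a polynomial of degree at most $\max(k,l)$, that $h_F\simeq h_T$ and $\card\Fh[T]\lesssim 1$, yields $\seminorm[2,\partial T]{\uv[T]}\simeq h_T^{d(\frac12-\frac1p)}\seminorm[p,\partial T]{\uv[T]}$ — the $h_T$-power being the same on both sides, as a short arithmetic check on the boundary-side exponent confirms. Substituting the two equivalences into~\eqref{eq:ST:stability} gives~\eqref{eq:ST:stability:p}.

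\emph{Part (ii).} Using the boundedness half of~\eqref{eq:ST:stability:p} with $\uv[T]=\IT v$, it suffices to bound $\seminorm[p,\partial T]{\IT v}$. Writing $z\coloneq\rT\IT v-v$ and unfolding the definition~\eqref{eq:dT.dTF} of $\dT$, $\dTF$ and that of $\IT$, one gets, for every $F\in\Fh[T]$, $(\dTF-\dT)\IT v = \lproj[F]{k}(z_{|F}) - (\lproj[T]{l}z)_{|F}$ (when $l<0$, which forces $k=0$, the second term is absent since $\dT\IT v=0$ by~\eqref{eq:rT.average}). I would then bound the two contributions separately: the first by the $L^p$-boundedness of $\lproj[F]{k}$ and the trace estimate in~\eqref{eq:approx.approx.trace} applied to $z$ (recall $\rT\circ\IT=\eproj{k+1}$ when $l\ge 0$, and that the analogous $\ell=1$ bounds hold when $l<0$ by Remark~\ref{rem:approx:rT.IT}); the second by a discrete trace inequality~\eqref{eq:Lp.trace.discrete} applied to the polynomial $\lproj[T]{l}z$, followed by the $L^p$-boundedness of $\lproj[T]{l}$ and the volumetric estimate in~\eqref{eq:approx.approx.trace}. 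Both contributions are thereby $\lesssim h_T^{k+2-\frac1p}\seminorm[W^{k+2,p}(T)]{v}$; raising to the power $p$, multiplying by $h_F^{1-p}\simeq h_T^{1-p}$, and summing over $F\in\Fh[T]$, the $h_T$-exponent collapses to $(k+1)p$, which gives~\eqref{eq:ST:consistency} after one more use of~\eqref{eq:ST:stability:p}. Finally, for $v\in\Poly{k+1}(T)$ one has $\seminorm[W^{k+2,p}(T)]{v}=0$, so~\eqref{eq:ST:consistency} forces $\ST\IT v=0$.

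\emph{Expected difficulty.} The only genuinely delicate parts are the bookkeeping of powers of $h_T$: in (i), where \rprop{S3} is indispensable because without control of the image of $\ST$ there is no way to pass from $L^2$ to $L^p$; and, in (ii), the careful handling of the trace term $(\lproj[T]{l}z)_{|F}$ through a discrete trace inequality, together with the degenerate low-order case $l=-1$. Everything else reduces to the approximation properties~\eqref{eq:approx.approx.trace} and to the standard $L^p$-boundedness of $L^2$-orthogonal projectors on polynomial spaces over regular mesh sequences.
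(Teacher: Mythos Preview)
Your proposal is correct and follows essentially the same approach as the paper: in (i) you use \rprop{S3} together with the direct/reverse Lebesgue embeddings for polynomials on each piece of $\mathcal{P}_T$ (and on each face) to transfer the $L^2$-equivalence \rprop{S1} to $L^p$, with the same scaling factor $h_T^{d(\frac12-\frac1p)}\simeq|T|^{\frac12-\frac1p}$; in (ii) you reduce via (i) to bounding $\seminorm[p,\partial T]{\IT v}$, identify $(\dTF-\dT)\IT v=\lproj[F]{k}z-(\lproj[T]{l}z)_{|F}$ with $z=\rT\IT v-v$, and conclude by the $L^p$-boundedness of the projectors, the discrete trace inequality~\eqref{eq:Lp.trace.discrete}, and the approximation properties of $\rT\circ\IT$ from Remark~\ref{rem:approx:rT.IT}. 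One tiny slip: when $l=-1$ the vanishing of $\dT\IT v$ follows from the convention $\Poly{-1}(T)=\{0\}$ rather than from~\eqref{eq:rT.average}, but this does not affect the argument.
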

In the proof, we will need the following direct and reverse Lebesgue embeddings, proved in~\cite[Lemma~5.1]{Di-Pietro.Droniou:16}:
  Let $X$ denote a measurable subset of $\Real^d$ with inradius $r_X$ and diameter $h_X$, and let two reals $r,s\in[1,+\infty]$ and an integer $\ell\in\Natural$ be fixed.
  Then, for all $q\in\Poly{\ell}(X)$, it holds that
  \begin{equation}\label{eq:lebesgue.embeddings}
    \norm[L^r(X)]{q}\simeq |X|^{\frac1r-\frac1s}\norm[L^s(X)]{q},
  \end{equation}
  where $a\simeq b$ means $C a\le b\le C^{-1} a$ with real number $C>0$ only depending on $d$, a lower bound of the ratio $\frac{r_X}{h_X}$, $r$, $s$, and $\ell$.

We will also need the following $L^p$-trace inequality (see, e.g.,~\cite[Eq. (A.10)]{Di-Pietro.Droniou:16}):
For all $T\in\Th$ and all $v\in W^{1,p}(T)$,
\begin{equation}\label{eq:Lp.trace}
  h_T^{\frac1p}\norm[L^p(\partial T)]{v}\lesssim
  \norm[L^p(T)]{v} + h_T\norm[L^p(T)^d]{\GRAD v},
\end{equation}
where $a\lesssim b$ means $a\le Cb$ with real number $C>0$ independent of $h$ and of $T$, but possibly depending on $d$, $p$, and $\varrho$.
When $v\in\Poly{\ell}(T)$ for some integer $\ell\ge 0$, combining~\eqref{eq:Lp.trace} with the following inverse inequality (see, e.g.,~\cite[Remark~A.2]{Di-Pietro.Droniou:16}):
\begin{equation}\label{eq:Lp.inverse}
  \norm[L^p(T)^d]{\GRAD v}\lesssim h_T^{-1}\norm[L^p(T)]{v},
\end{equation} yields
\begin{equation}\label{eq:Lp.trace.discrete}
  h_T^{\frac1p}\norm[L^p(\partial T)]{v}\lesssim
  \norm[L^p(T)]{v},
\end{equation}
where the hidden multiplicative constant in~\eqref{eq:Lp.inverse} and~\eqref{eq:Lp.trace.discrete} can additionally depend on $\ell$.

We are now ready to prove Proposition~\ref{prop:ST}.
\begin{proof}[Proof of Proposition~\ref{prop:ST}]
  (i) \emph{$L^p$-stability and boundedness.}
  When $p=2$,~\eqref{eq:ST:stability:p} coincides with~\eqref{eq:ST:stability}.
  Let us now consider the case $p\neq 2$ and recall the following inequalities valid for all integers $n\ge 1$ and all reals $q\in[1,+\infty)$, $a_i\ge 0$ ($1\le i\le n$):
  \begin{equation}\label{eq:magic}
    \sum_{i=1}^n a_i^q\le\left(\sum_{i=1}^na_i\right)^q\le n^{q-1}\sum_{i=1}^n a_i^q,
    \qquad
    n^{\frac{1-q}{q}}\sum_{i=1}^n a_i^{\frac1q}\le\left(\sum_{i=1}^na_i\right)^{\frac1q}\le \sum_{i=1}^n a_i^{\frac1q}.
  \end{equation}

  We also notice that, owing to~\eqref{eq:S3.cond}, it holds for all $T\in\Th$ that
  \begin{equation}\label{eq:ST:stability:p:1}
    |P|\simeq |T|\quad\forall P\in\mathcal{P}_T,\qquad
    \card(\mathcal{P}_T)\simeq 1.
  \end{equation}
  To prove $|P|\simeq |T|$, it suffices to observe that $|P|\simeq h_P^d\simeq h_T^d\simeq |T|$, where we have used, respectively, the first and second conditions in~\eqref{eq:S3.cond} and the mesh regularity to conclude.
  The bound on $\card(\mathcal{P}_T)$ follows by writing $|T|=\sum_{P\in\mathcal{P}_T} |P|\simeq \sum_{P\in\mathcal{P}_T}|T|=\card(\mathcal{P}_T)|T|$.
  
  Let now $T\in\Th$ and $\uv[T]\in\UT$ be fixed.
  Since $\ST\uv[T]\in \Poly{k_S}(P)$ for all $P\in\mathcal P_T$, we have that
  $$
  \begin{aligned}
    \norm[L^p(T)^d]{\ST\uv[T]}^p
    &=\sum_{P\in\mathcal{P}_T}\norm[L^p(P)^d]{\ST\uv[T]}^p
    \\
    &\simeq\sum_{P\in\mathcal{P}_T}|P|^{p\left(\frac1p-\frac12\right)}\norm[L^2(P)^d]{\ST\uv[T]}^p
    &\qquad&\text{Eqs.~\eqref{eq:S3.cond} and~\eqref{eq:lebesgue.embeddings}}
    \\
    &\simeq|T|^{p\left(\frac1p-\frac12\right)}\sum_{P\in\mathcal{P}_T}\norm[L^2(P)^d]{\ST\uv[T]}^p
    &\qquad&\text{Eq.~\eqref{eq:ST:stability:p:1}}.
  \end{aligned}
  $$
  In the second line, condition~\eqref{eq:S3.cond} is invoked to use $\varrho$ as a lower bound for $\frac{r_P}{h_P}$ when applying~\eqref{eq:lebesgue.embeddings}.
  Using the first pair of inequalities in~\eqref{eq:magic} with $q=\frac{p}2$ if $p\ge 2$, the second pair of inequalities in~\eqref{eq:magic} with $q=\frac2p$ if $p<2$ and, in both cases, $a_i=\norm[L^2(P)^d]{\ST\uv[T]}^2$ and $n=\card(\mathcal{P}_T)\simeq 1$, we infer
  $$
  \norm[L^p(T)^d]{\ST\uv[T]}^p
  \simeq|T|^{p\left(\frac1p-\frac12\right)}\left(\sum_{P\in\mathcal{P}_T}\norm[L^2(P)^d]{\ST\uv[T]}^2\right)^{\frac{p}{2}}
  = |T|^{p\left(\frac1p-\frac12\right)}\norm[L^2(T)^d]{\ST\uv[T]}^p.
  $$
  Taking the $p$th root of the above relation, we arrive at
  \begin{equation}\label{eq:ST:stability:p:2}
    \norm[L^p(T)^d]{\ST\uv[T]}\simeq |T|^{\frac1p-\frac12}\norm[L^2(T)^d]{\ST\uv[T]}.
  \end{equation}
  
  Proceeding similarly using~\eqref{eq:lebesgue.embeddings} repeatedly on the faces of $T$ and using $|F|h_F\simeq |T|$, we can prove that
  \begin{equation}\label{eq:ST:stability:p:3}
    \seminorm[2,\partial T]{\uv[T]}\simeq |T|^{\frac12-\frac1p}\seminorm[p,\partial T]{\uv[T]}.
  \end{equation}  
  Combining~\eqref{eq:ST:stability:p:2} and~\eqref{eq:ST:stability:p:3} with~\eqref{eq:ST:stability},~\eqref{eq:ST:stability:p} follows.

  (ii) \emph{Consistency.}
  Let a mesh element $T\in\Th$ be fixed and set, for the sake of brevity, $\huv\coloneq\IT v$.
  Using the uniform equivalence~\eqref{eq:ST:stability:p} proved in the first point, and recalling the definition~\eqref{eq:norm1p.T} of the $\seminorm[p,\partial T]{{\cdot}}$-seminorm, it is inferred that
  $$
  \norm[L^p(T)^d]{\ST\huv}^p
  \lesssim\seminorm[p,\partial T]{\huv}^p
  = \sum_{F\in\Fh[T]} h_F^{1-p}\norm[L^p(F)]{(\dTF-\dT)\huv}^p.
  $$
  For all $F\in\Fh[T]$, using the triangle inequality followed by the discrete trace inequality~\eqref{eq:Lp.trace.discrete}, we infer that
  $$
  \norm[L^p(F)]{(\dTF-\dT)\huv}^p\lesssim\norm[L^p(F)]{\dTF\huv}^p + \norm[L^p(F)]{\dT\huv}^p
  \lesssim \norm[L^p(F)]{\dTF\huv}^p + h_T^{-1}\norm[L^p(T)]{\dT\huv}^p.
  $$
  Using the above inequality together with the uniform bound on the number of faces of $T$ (see~\cite[Lemma~1.41]{Di-Pietro.Ern:12}) for the second term, and expanding the difference operators according to their definitions~\eqref{eq:dT.dTF}, we arrive at
  $$
  \norm[L^p(T)^d]{\ST\huv}^p
  \lesssim h_T^{-p}\norm[L^p(T)]{\lproj[T]{l}(\rT\huv - v)}^p
  + \sum_{F\in\Fh[T]} h_F^{1-p}\norm[L^p(F)]{\lproj[F]{k}(\rT\huv - v)}^p.
  $$
  We notice that the projectors in the above bound can be removed invoking the $L^p(T)$-boundedness of $\lproj[T]{l}$ for the first term and the $L^p(F)$-boundedness of $\lproj[F]{k}$ for the second (see~\cite[Lemma 3.2]{Di-Pietro.Droniou:16}).
  Combining this observation with the optimal approximation properties of $\rT\circ\IT$ discussed in Remark~\ref{rem:approx:rT.IT}, we then conclude that
  $$
  \norm[L^p(T)^d]{\ST\huv}^p
  \lesssim h_T^{-p}\norm[L^p(T)]{\rT\huv - v}^p
  + \sum_{F\in\Fh[T]} h_F^{1-p}\norm[L^p(F)]{\rT\huv - v}^p
  \lesssim
  h_T^{p(k+1)}\seminorm[W^{k+2,p}(T)]{v}^p.\qedhere
  $$
\end{proof}

\subsection{Uniform equivalence of discrete $W^{1,p}$-seminorms}\label{sec:equiv.seminorms}

The second preliminary result is the uniform equivalence of various $W^{1,p}$-seminorms on the global space of discrete unknowns $\Uh$.

\begin{proposition}[Uniform equivalence of discrete $W^{1,p}$-seminorms]\label{prop:norm.equiv}
  Define the discrete semi\-norm $\tnorm[1,p,h]{{\cdot}}$ such that, for all $\uv\in\Uh$,
  \begin{equation}\label{eq:tnorm}
	\begin{aligned}
    &\tnorm[1,p,h]{\uv}^p\coloneq\sum_{T\in\Th}\tnorm[1,p,T]{\uv[T]}^p,\\
    &\tnorm[1,p,T]{\uv[T]}^p\coloneq\norm[L^p(T)^d]{\GRAD v_T}^p
    + \sum_{F\in\Fh[T]} h_F^{1-p}\norm[L^p(F)]{v_F-v_T}^p\quad\forall T\in\Th.
	\end{aligned}
  \end{equation}
  Then, $\tnorm[1,p,h]{{\cdot}}$ is a norm on the subspace $\Uhz$ and, denoting by $\{\ST\st T\in\Th\}$ a family of stabilising contributions that satisfy \rprop{S1}--\rprop{S3} and defining $\grD[h]$ by \eqref{eq:DSGDM:grD}, it holds for all $\uv[h]\in\Uh$ and all $T\in\Th$,
  \begin{equation}\label{eq:norm.equiv.local}
    \tnorm[1,p,T]{\uv[T]}
    \simeq\norm[1,p,T]{\uv[T]}
    \simeq\norm[L^p(T)^d]{\grD[h]\uv},
  \end{equation}
  where $a\simeq b$ means $C a\le b\le C^{-1} a$ with real number $C>0$ independent of $T$ and $h$, but possibly depending on $d$, $p$, $\varrho$, $k$, $l$, and $k_{\rm S}$.
  As a consequence, for all $\uv[h]\in\Uh$,
  \begin{equation}\label{eq:norm.equiv}
    \tnorm[1,p,h]{\uv}
    \simeq\norm[1,p,h]{\uv}
    \simeq\norm[L^p(\Omega)^d]{\grD[h]\uv}.
  \end{equation}
\end{proposition}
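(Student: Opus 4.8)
The plan is to establish the local chain of equivalences in~\eqref{eq:norm.equiv.local}, from which the global statement~\eqref{eq:norm.equiv} follows by raising to the power $p$ and summing over $T\in\Th$. The three quantities to compare are $\tnorm[1,p,T]{\uv[T]}$, $\norm[1,p,T]{\uv[T]}$, and $\norm[L^p(T)^d]{\grD[h]\uv}=\norm[L^p(T)^d]{\grT\uv[T]}$, recalling $\grT\uv[T]=\GT\uv[T]+\ST\uv[T]$.

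For the second equivalence $\norm[1,p,T]{\uv[T]}\simeq\norm[L^p(T)^d]{\grT\uv[T]}$, the key point is the orthogonality property~\rprop{S2}. Since $\GT\uv[T]\in\Poly{k}(T)^d$ and $(\ST\uv[T],\bphi)_T=0$ for all $\bphi\in\Poly{k}(T)^d$, when $p=2$ we get a Pythagorean identity $\norm[L^2(T)^d]{\grT\uv[T]}^2=\norm[L^2(T)^d]{\GT\uv[T]}^2+\norm[L^2(T)^d]{\ST\uv[T]}^2$. For general $p$ this clean identity is lost, but one can still argue an equivalence: using~\rprop{S3} together with the direct/reverse Lebesgue embeddings~\eqref{eq:lebesgue.embeddings} (exactly as in the proof of Proposition~\ref{prop:ST}(i)), one passes between $L^p$ and $L^2$ norms on $\GT\uv[T]$ and on $\ST\uv[T]$ up to factors of $|T|^{1/p-1/2}$, applies the $L^2$-orthogonality, and uses the elementary inequalities~\eqref{eq:magic} to recombine. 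This gives $\norm[L^p(T)^d]{\grT\uv[T]}\simeq\norm[L^p(T)^d]{\GT\uv[T]}+\norm[L^p(T)^d]{\ST\uv[T]}$, and then~\rprop{S1}, in its $L^p$ form~\eqref{eq:ST:stability:p} from Proposition~\ref{prop:ST}(i), turns $\norm[L^p(T)^d]{\ST\uv[T]}$ into $\seminorm[p,\partial T]{\uv[T]}$, which together with $\norm[L^p(T)^d]{\GT\uv[T]}$ is precisely $\norm[1,p,T]{\uv[T]}$ by definition~\eqref{eq:norm1p.T}.

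For the first equivalence $\tnorm[1,p,T]{\uv[T]}\simeq\norm[1,p,T]{\uv[T]}$, I would compare term by term. The boundary contributions differ: $\tnorm[1,p,T]{{\cdot}}$ uses $h_F^{1-p}\norm[L^p(F)]{v_F-v_T}^p$ while $\norm[1,p,T]{{\cdot}}$ uses $h_F^{1-p}\norm[L^p(F)]{(\dTF-\dT)\uv[T]}^p$. By the definitions~\eqref{eq:dT.dTF}, $(\dTF-\dT)\uv[T]=\lproj[F]{k}(\rT\uv[T]-v_F)-\lproj[T]{l}(\rT\uv[T]-v_T)$; the relation~\eqref{eq:dT.dTF'} identifies this with the difference of face and cell components of $\IT\rT\uv[T]-\uv[T]$. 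The bridge between the two seminorms is therefore Lemma~\ref{lem:norm.interp} (announced in the excerpt and used in Section~\ref{sec:links}), which controls the interpolation-error seminorm; combined with the optimal approximation/stability properties of $\rT$ and the $L^p$-boundedness of $\lproj[T]{l}$ and $\lproj[F]{k}$ (\cite[Lemma~3.2]{Di-Pietro.Droniou:16}), the discrete trace inequality~\eqref{eq:Lp.trace.discrete}, the inverse inequality~\eqref{eq:Lp.inverse}, and the commutation~\eqref{GT:commutativity} relating $\GT$ and $\GRAD$ on polynomials, one obtains two-sided bounds between $\norm[L^p(T)^d]{\GT\uv[T]}$ and $\norm[L^p(T)^d]{\GRAD v_T}$ (modulo boundary terms) and between the two boundary seminorms. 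Finally, that $\tnorm[1,p,h]{{\cdot}}$ is a norm on $\Uhz$ follows from the equivalences: if $\tnorm[1,p,h]{\uv}=0$ then $\GRAD v_T=0$ on each $T$ so $v_T$ is constant, the jump terms force all these constants and all face values to agree across the mesh, and the homogeneous boundary condition $v_F\equiv0$ on $\Fhb$ then propagates to kill everything, so $\uv=0$.

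The main obstacle is the first equivalence, specifically making rigorous the comparison between the ``naive'' jump seminorm $h_F^{1-p}\norm[L^p(F)]{v_F-v_T}^p$ and the ``HHO'' seminorm built from $\dTF-\dT$: this requires showing that replacing $v_T$ (resp. $v_F$) by its difference with $\rT\uv[T]$ — i.e., passing through the high-order reconstruction and the associated projectors — changes the seminorm only by uniformly bounded factors, which is exactly the content of Lemma~\ref{lem:norm.interp} and rests on the optimal approximation properties of $\rT\circ\IT$ recorded in Remark~\ref{rem:approx:rT.IT}. The $p\neq2$ gymnastics with Lebesgue embeddings in the second equivalence is technical but entirely parallel to Proposition~\ref{prop:ST}(i) and presents no conceptual difficulty.
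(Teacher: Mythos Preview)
Your approach to the second equivalence $\norm[1,p,T]{\uv[T]}\simeq\norm[L^p(T)^d]{\grT\uv[T]}$ is exactly the paper's: pass to $L^2$ via the Lebesgue embeddings~\eqref{eq:lebesgue.embeddings} (legitimate because $\grT\uv[T]$ is piecewise polynomial by \rprop{S3}), use the Pythagorean splitting from \rprop{S2}, invoke \rprop{S1} to replace $\norm[L^2(T)^d]{\ST\uv[T]}$ by $\seminorm[2,\partial T]{\uv[T]}$, and return to $L^p$. Your argument for $\tnorm[1,p,h]{\cdot}$ being a norm on $\Uhz$ is also the standard one the paper cites.

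For the first equivalence $\tnorm[1,p,T]{\uv[T]}\simeq\norm[1,p,T]{\uv[T]}$, the paper does not prove it but simply cites \cite[Lemma~5.2]{Di-Pietro.Droniou:16}. Your sketch is in the right spirit, but your identification of Lemma~\ref{lem:norm.interp} as ``exactly the content'' of the bridge is off-target: that lemma compares $\tnorm[1,p,T]{\uv[T]-\IT\rT\uv[T]}$ with $\seminorm[p,\partial T]{\uv[T]}$, which is a statement about the \emph{residual} $\uv[T]-\IT\rT\uv[T]$, not about $\uv[T]$ itself. It does not by itself relate $\tnorm[1,p,T]{\uv[T]}$ to $\norm[1,p,T]{\uv[T]}$, and invoking it would also reverse the logical order (in the paper, Lemma~\ref{lem:norm.interp} comes after Proposition~\ref{prop:norm.equiv}). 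The actual argument in the cited reference proceeds more directly: one bounds $\norm[L^p(T)^d]{\GT\uv[T]}$ by $\tnorm[1,p,T]{\uv[T]}$ using the definition~\eqref{eq:GT}, integration by parts, and the discrete trace inequality~\eqref{eq:Lp.trace.discrete}; conversely $\GRAD v_T = \vlproj[T]{l-1}\GT\uv[T]$ (or a similar identity) gives the reverse volumetric bound; and the two boundary seminorms are compared via the $L^p$-boundedness of the projectors together with trace and inverse inequalities applied to $\rT\uv[T]-v_T$. The ingredients you list (trace, inverse, projector boundedness, properties of $\rT$) are the right ones, but Lemma~\ref{lem:norm.interp} is a cousin of the result rather than its proof.
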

\begin{proof}
  The fact that $\tnorm[1,p,h]{{\cdot}}$ is a norm on $\Uhz$ can be proved in a similar manner as for the case $p=2$ and $l=k$ considered in~\cite[Proposition~5]{Di-Pietro.Ern:15}.
  The local seminorm equivalence $\tnorm[1,p,T]{\uv}\simeq\norm[1,p,T]{\uv}$ valid for all $T\in\Th$, on the other hand, is proved in~\cite[Lemma~5.2]{Di-Pietro.Droniou:16} (see also references therein) for the case $l=k$, and the same reasoning extends to $l=k-1$ and $l=k+1$.
 
  Let now a mesh element $T\in\Th$ be fixed.
  We have that
  \begin{equation}\label{eq:norm.equiv:1}
    \begin{aligned}
      \norm[L^p(T)^d]{\grD[h]\uv}=
      \norm[L^p(T)^d]{\grT\uv[T]}
      &\simeq |T|^{\frac1p-\frac12}\norm[L^2(T)^d]{\grT\uv[T]}
      \\
      &= |T|^{\frac1p-\frac12}\left( \norm[L^2(T)^d]{\GT\uv[T]}^2 + \norm[L^2(T)^d]{\ST\uv[T]}^2\right)^{\frac12}
      \\
      &\simeq |T|^{\frac1p-\frac12} \left(\norm[L^2(T)^d]{\GT\uv[T]} + \seminorm[2,\partial T]{\uv[T]}\right)
      \\
      &\simeq \left( \norm[L^p(T)^d]{\GT\uv[T]} + \seminorm[p,\partial T]{\uv[T]}\right)
      \simeq\norm[1,p,T]{\uv[T]},
    \end{aligned}
  \end{equation}
  where we have used a reasoning similar to the one leading to \eqref{eq:ST:stability:p:2} in the first line (recall that $\grT\uv[T]$ is piecewise polynomial on $T$ owing to \rprop{S3}), the orthogonality property \rprop{S2} in the second line, the stability and boundedness property \rprop{S1} in the third line, and \eqref{eq:ST:stability:p:3} together with the discrete Lebesgue embeddings \eqref{eq:lebesgue.embeddings} in the last line.
This concludes the proof of \eqref{eq:norm.equiv.local}. The global version
\eqref{eq:norm.equiv} follows by raising \eqref{eq:norm.equiv.local} to the power $p$ and summing over $T\in\Th$.
\end{proof}

The following lemma, which justifies the importance of the seminorm $\seminorm[p,\partial T]{\cdot}$,
was used in Section \ref{sec:generality.ST} to prove that HHO, ncMFD and ncVEM are GDM.

\begin{lemma}\label{lem:norm.interp}
For any $T\in\Th$ and any $\uv[T]\in \UT$, it holds that
\begin{equation}\label{equiv.norm.residuals}
\tnorm[1,p,T]{\uv[T]-\IT\rT\uv[T]}\simeq \seminorm[p,\partial T]{\uv[T]},
\end{equation}
where $a\simeq b$ means $C^{-1} a\le b\le C a$ with real number $C>0$ depending only on $d$, $\varrho$, $p$, $k$, and $l$.
As a consequence, $\uv[T]-\IT\rT\uv[T]=0$ if and only if $\seminorm[p,\partial T]{\uv[T]}=0$.
\end{lemma}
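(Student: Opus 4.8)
The plan is to reduce the seminorm equivalence \eqref{equiv.norm.residuals} to the already-established quantities by exploiting the key identity \eqref{eq:dT.dTF'}, namely $(\dT\uv[T],(\dTF\uv[T])_{F\in\Fh[T]})=\IT\rT\uv[T]-\uv[T]$. Writing $\uw[T]\coloneq\uv[T]-\IT\rT\uv[T]$, this identity tells us that the element-component of $\uw[T]$ is $-\dT\uv[T]=\lproj[T]{l}(v_T-\rT\uv[T])$ and its face-components are $-\dTF\uv[T]=\lproj[F]{k}(v_F-\rT\uv[T])$. Plugging these into the definition \eqref{eq:tnorm} of $\tnorm[1,p,T]{{\cdot}}$ gives
\[
\tnorm[1,p,T]{\uw[T]}^p = \norm[L^p(T)^d]{\GRAD\dT\uv[T]}^p + \sum_{F\in\Fh[T]}h_F^{1-p}\norm[L^p(F)]{(\dTF-\dT)\uv[T]}^p,
\]
so the second term is exactly $\seminorm[p,\partial T]{\uv[T]}^p$ by the definition in \eqref{eq:norm1p.T}. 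It therefore remains only to control the first term, $\norm[L^p(T)^d]{\GRAD\dT\uv[T]}$, in both directions by $\seminorm[p,\partial T]{\uv[T]}$.

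First I would prove the bound $\norm[L^p(T)^d]{\GRAD\dT\uv[T]}\lesssim\seminorm[p,\partial T]{\uv[T]}$. For $p=2$ this is precisely the estimate \eqref{control.volumic.alt}, obtained by testing the integration-by-parts relation \eqref{eq:insp.2} with $w=-\dT\uv[T]$ and invoking the discrete trace inequality \eqref{eq:Lp.trace.discrete}. For general $p$ one passes from the $L^2$-estimate to the $L^p$-estimate using the discrete Lebesgue embeddings \eqref{eq:lebesgue.embeddings} on $T$ (applied to the polynomial $\GRAD\dT\uv[T]\in\Poly{l-1}(T)^d$) together with the relation \eqref{eq:ST:stability:p:3} between the $\seminorm[2,\partial T]{{\cdot}}$ and $\seminorm[p,\partial T]{{\cdot}}$ seminorms and the bound $|T|\simeq h_T^d$; this is the same rescaling argument already used in the proof of Proposition~\ref{prop:norm.equiv} (see \eqref{eq:norm.equiv:1}). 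Consequently $\tnorm[1,p,T]{\uw[T]}^p\lesssim\seminorm[p,\partial T]{\uv[T]}^p$, which gives the upper bound in \eqref{equiv.norm.residuals}.

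The lower bound $\seminorm[p,\partial T]{\uv[T]}\lesssim\tnorm[1,p,T]{\uw[T]}$ is immediate: by the display above, $\seminorm[p,\partial T]{\uv[T]}^p$ is one of the two nonnegative summands making up $\tnorm[1,p,T]{\uw[T]}^p$, hence $\seminorm[p,\partial T]{\uv[T]}\le\tnorm[1,p,T]{\uw[T]}$. This completes the two-sided estimate \eqref{equiv.norm.residuals}. The concluding "as a consequence" is then purely formal: $\uv[T]-\IT\rT\uv[T]=0$ forces $\tnorm[1,p,T]{\uv[T]-\IT\rT\uv[T]}=0$, hence $\seminorm[p,\partial T]{\uv[T]}=0$ by the lower bound; conversely $\seminorm[p,\partial T]{\uv[T]}=0$ forces $\tnorm[1,p,T]{\uv[T]-\IT\rT\uv[T]}=0$ by the upper bound, and since the element- and face-components of $\uv[T]-\IT\rT\uv[T]$ are the polynomials $\dT\uv[T]$ and $\dTF\uv[T]$ which are recovered by the $L^p(T)$- and $L^p(F)$-norms appearing in $\tnorm[1,p,T]{{\cdot}}^p$ (the $\GRAD$ fixes $\dT\uv[T]$ up to a constant, which is then pinned down because $(\dTF-\dT)\uv[T]$ must vanish on every face, forcing $\dT\uv[T]$ equal to the constant $\dTF\uv[T]$, which in turn vanishes), we get $\uv[T]-\IT\rT\uv[T]=0$. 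The only mildly delicate point is the $L^2\to L^p$ transfer of \eqref{control.volumic.alt}, but this is routine given the scaling tools already imported from \cite{Di-Pietro.Droniou:16} and used elsewhere in the appendix.
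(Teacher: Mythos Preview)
Your proof of the equivalence \eqref{equiv.norm.residuals} is correct and matches the paper's argument essentially line for line: identify the components of $\uw[T]\coloneq\uv[T]-\IT\rT\uv[T]$ via \eqref{eq:dT.dTF'}, expand $\tnorm[1,p,T]{\uw[T]}^p$, recognise the face sum as $\seminorm[p,\partial T]{\uv[T]}^p$, and control the volumetric term $\norm[L^p(T)^d]{\GRAD\dT\uv[T]}$ from \eqref{control.volumic.alt} plus Lebesgue embeddings.

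The gap is in the ``as a consequence''. The map $\tnorm[1,p,T]{{\cdot}}$ is only a \emph{seminorm} on $\UT$: it vanishes on every constant vector $(c,(c)_{F\in\Fh[T]})$. So from $\tnorm[1,p,T]{\uw[T]}=0$ you only get $w_T\equiv c$ and $w_F\equiv c$ for all $F\in\Fh[T]$, i.e.\ $\dT\uv[T]=-c$ and $\dTF\uv[T]=-c$. Your sentence ``forcing $\dT\uv[T]$ equal to the constant $\dTF\uv[T]$, which in turn vanishes'' gives no reason why this constant should be zero; the norms in $\tnorm[1,p,T]{{\cdot}}$ do \emph{not} recover $\dT\uv[T]$ itself, only its gradient and the face differences $(\dTF-\dT)\uv[T]$.

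The paper closes this gap by computing $\rT\uw[T]$ in two ways. On one hand $\uw[T]=\IT c$, so $\rT\uw[T]=\rT\IT c=\eproj[T]{k+1}c=c$. On the other hand, since $\rT\IT$ acts as the identity on $\Poly{k+1}(T)\ni\rT\uv[T]$, one has $\rT\uw[T]=\rT\uv[T]-\rT\IT\rT\uv[T]=0$, whence $c=0$. An equivalent fix is to note that $\dT\uv[T]=\lproj[T]{l}(\rT\uv[T]-v_T)$ has zero mean by \eqref{eq:rT.average}, so a constant $\dT\uv[T]$ must be zero (with the case $l=-1$ being trivial since $\Poly{-1}(T)=\{0\}$).
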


\begin{proof} Here, $a\lesssim b$ means that $a\le Cb$ for some real number $C>0$ as in the statement.
Using direct and inverse Lebesgue inequalities as in the proof of Proposition \ref{prop:ST},
we deduce from \eqref{control.volumic.alt} that $\norm[L^p(T)^d]{\GRAD\dT[l]\uv[T]}\lesssim\seminorm[p,\partial T]{\uv[T]}$.
Hence, using the relation \eqref{eq:dT.dTF'} together with the definitions \eqref{eq:tnorm} of $\tnorm[1,p,T]{{\cdot}}$ and \eqref{eq:norm1p.T} of $\seminorm[p,\partial T]{\uv[T]}$, we obtain
\[
\tnorm[1,p,T]{\uv[T]-\IT\rT\uv[T]}^p
=\norm[L^p(T)^d]{\GRAD \dT[l]\uv[T]}^p
+\sum_{F\in\Fh[T]}h_F^{1-p}\norm[L^p(F)]{(\dTF[k]-\dT[l])\uv[T]}^p
\simeq\seminorm[p,\partial T]{\uv[T]}^p,
\]
which is \eqref{equiv.norm.residuals}.
If $\uv[T]-\IT\rT\uv[T]=0$, the relation above shows that $\seminorm[p,\partial T]{\uv[T]}=0$.
Conversely, if $\seminorm[p,\partial T]{\uv[T]}=0$ then, letting
$\uw[T]\coloneq\uv[T]-\IT\rT\uv[T]$, we have that $\tnorm[1,p,T]{\uw[T]}=0$, which implies in turn that $w_T$ is constant equal to $c$ and that $w_F=w_T=c$ for all $F\in\Fh[T]$. 
Then, $\rT\uw[T]=\rT\IT c=\eproj[T]{k+1}c=c$ (see Remark
\ref{rem:approx:rT.IT} and additionally observe, for $k=0$ and $l=-1$, that
$w_T=c$ owing to the choice of the weights $\omega_{TF}$).
We then have that
$$
c=\rT\uw[T]
=\rT(\uv[T]-\IT\rT\uv[T])
= \rT\uv[T]-\rT\IT\rT\uv[T]
= \rT\uv[T]-\rT\uv[T]
=0,
$$
where we have used the definition of $\uw[T]$ in the second equality, the
linearity of $\rT$ in the third, the fact that $\rT\IT\rT=\rT$ in the fourth
(since $\rT\IT=\eproj[T]{k+1}$ preserves polynomials up to degree $\le k+1$).
Thus, the constant $c$ is 0, which shows that $\uv[T]-\IT\rT\uv[T]=0$.
\end{proof}

The last lemma of this section was used in Section \ref{sec:generality.ST} to analyse the ncVEM stabilisation in the context of the GDM.

\begin{lemma}\label{lem:ncVEM.norm}
For all $T\in\Th$ and all $\vh\in \Vhk(T)$ (with $\Vhk(T)$ defined by \eqref{eq:VhkT}), it holds that
\begin{equation}\label{eq:ncVEM.norm}
\norm[L^2(T)^d]{\GRAD\vh} \simeq \tnorm[1,2,T]{\IT[k,k-1]\vh},
\end{equation}
where $\tnorm[1,2,T]{\cdot}$ is defined in \eqref{eq:tnorm} and $a\simeq b$ means $C^{-1} a\le b\le Ca$ with real number $C>0$ independent on $h$, but possibly depending on $d$, $k$, $l$ and $\varrho$.
As a consequence,
\begin{equation}\label{eq:ncVEM.norm.2}
\text{for all $\vh\in\Vhk(T)$ such that $\eproj[T]{k}\vh=0$, $\norm[L^2(T)^d]{\GRAD\vh}\simeq\seminorm[1,2,T]{\IT[k,k-1]\vh}$.}
\end{equation}
\end{lemma}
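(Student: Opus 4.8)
The plan is to establish the two-sided bound \eqref{eq:ncVEM.norm} by relating both quantities to the seminorm $\seminorm[2,\partial T]{\cdot}$ already studied, and then to deduce \eqref{eq:ncVEM.norm.2} as a direct corollary. First I would set $\uv[T]\coloneq\IT[k,k-1]\vh$ and expand $\tnorm[1,2,T]{\uv[T]}^2 = \norm[L^2(T)^d]{\GRAD v_T}^2 + \sum_{F\in\Fh[T]} h_F^{-1}\norm[L^2(F)]{v_F-v_T}^2$ according to its definition \eqref{eq:tnorm}, where $v_T=\lproj[T]{k-1}\vh$ and $v_F=\lproj[F]{k}\vh_{|F}$. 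The key structural fact to exploit is Remark~\ref{rem:approx:rT.IT}, namely $\rT\circ\IT[k,k-1]=\eproj[T]{k+1}$, which holds here since $l=k-1\ge 0$ whenever $k\ge 1$ (and in the degenerate case the low-order argument in the same remark applies). Thus $\rT\uv[T]=\eproj[T]{k+1}\vh$, and by the optimal approximation properties of the elliptic projector, $\GRAD\eproj[T]{k+1}\vh$ approximates $\GRAD\vh$; more precisely, since the membership $\vh\in\Vhk(T)$ gives $\Delta\vh\in\Poly{k-1}(T)$ and $\normal_{TF}\SCAL\GRAD\vh_{|F}\in\Poly{k}(F)$, I would show directly that $\GRAD\rT\uv[T]=\GRAD\eproj[T]{k+1}\vh$ coincides with $\vlproj[T]{k}\GRAD\vh$ by using the defining relation \eqref{eq:rT.pde} together with an integration by parts on $\vh$ itself, matching term by term the Laplacian and normal-trace contributions thanks to these polynomial memberships.

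Next I would use the norm equivalence $\tnorm[1,2,T]{\cdot}\simeq\norm[1,2,T]{\cdot}$ from Proposition~\ref{prop:norm.equiv}, together with Lemma~\ref{lem:norm.interp} specialised to $p=2$, which gives $\tnorm[1,2,T]{\uv[T]-\IT\rT\uv[T]}\simeq\seminorm[2,\partial T]{\uv[T]}$. Combining these, and writing $\uv[T]=(\uv[T]-\IT\rT\uv[T]) + \IT\rT\uv[T]$, one controls $\tnorm[1,2,T]{\uv[T]}$ by $\norm[L^2(T)^d]{\GRAD\rT\uv[T]} + \seminorm[2,\partial T]{\uv[T]}$ and conversely. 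Since $\GRAD\rT\uv[T]=\vlproj[T]{k}\GRAD\vh$, we have $\norm[L^2(T)^d]{\GRAD\rT\uv[T]}\le\norm[L^2(T)^d]{\GRAD\vh}$ by $L^2$-stability of the projector, giving the bound $\tnorm[1,2,T]{\uv[T]}\lesssim\norm[L^2(T)^d]{\GRAD\vh} + \seminorm[2,\partial T]{\uv[T]}$. For the reverse direction and to close the loop, I would bound $\seminorm[2,\partial T]{\uv[T]}$ itself: using \eqref{eq:inspiration.residual}-type identities or, more simply, the approximation estimates \eqref{eq:approx.approx.trace} applied to $\rT\uv[T]-v_T$ and $\rT\uv[T]-v_F$ — which measure exactly $\dT$ and $\dTF$ — one shows $\seminorm[2,\partial T]{\uv[T]}\lesssim\norm[L^2(T)^d]{\GRAD(\vh-\eproj[T]{k+1}\vh)}\le\norm[L^2(T)^d]{\GRAD\vh}$, using that $\eproj[T]{k+1}$ is an orthogonal projection for the $H^1$-seminorm. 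This yields $\tnorm[1,2,T]{\uv[T]}\lesssim\norm[L^2(T)^d]{\GRAD\vh}$. For the opposite inequality $\norm[L^2(T)^d]{\GRAD\vh}\lesssim\tnorm[1,2,T]{\uv[T]}$, I would write $\norm[L^2(T)^d]{\GRAD\vh}^2 = \norm[L^2(T)^d]{\vlproj[T]{k}\GRAD\vh}^2 + \norm[L^2(T)^d]{(\mathrm{Id}-\vlproj[T]{k})\GRAD\vh}^2$, bound the first term by $\norm[L^2(T)^d]{\GRAD\rT\uv[T]}\le\tnorm[1,2,T]{\uv[T]}$, and bound the second term — which equals $\norm[L^2(T)^d]{\GRAD(\vh-\eproj[T]{k+1}\vh)}$ up to the orthogonal decomposition — by $\seminorm[2,\partial T]{\uv[T]}\lesssim\tnorm[1,2,T]{\uv[T]-\IT\rT\uv[T]}\le\tnorm[1,2,T]{\uv[T]} + \tnorm[1,2,T]{\IT\rT\uv[T]}$, controlling the last summand again via $\norm[L^2(T)^d]{\GRAD\rT\uv[T]}$ and the norm equivalence of Proposition~\ref{prop:norm.equiv} applied to the interpolate $\IT\rT\uv[T]=\IT\eproj[T]{k+1}\vh$.

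Finally, \eqref{eq:ncVEM.norm.2} follows immediately: if $\eproj[T]{k}\vh=0$ then $v_T=\lproj[T]{k-1}\vh$ and the cell unknown is pinned by the elliptic projector, so the full seminorm $\seminorm[1,2,T]{\IT[k,k-1]\vh}$ (which includes a cell contribution measuring $v_T$) is equivalent to $\tnorm[1,2,T]{\IT[k,k-1]\vh}$ because the $L^2(T)$-size of $v_T$ is then controlled — via a Poincaré inequality on $T$ with the zero-average constraint induced by $\eproj[T]{k}\vh=0$ — by $\norm[L^2(T)^d]{\GRAD v_T}$, which already appears in $\tnorm[1,2,T]{\cdot}$. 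I expect the main obstacle to be the bookkeeping in the identity $\GRAD\rT\IT[k,k-1]\vh = \vlproj[T]{k}\GRAD\vh$: one must carefully check, using the precise structure of $\Vhk(T)$ (that $\Delta\vh\in\Poly{k-1}(T)$ and $\normal_{TF}\SCAL\GRAD\vh_{|F}\in\Poly{k}(F)$), that the integration-by-parts terms in \eqref{eq:rT.pde} evaluated at $\uv[T]=\IT[k,k-1]\vh$ reproduce exactly $(\GRAD\vh,\GRAD w)_T$ for all $w\in\Poly{k+1}(T)$, so that no projection error is incurred on the consistent part; the remaining estimates are routine applications of \eqref{eq:approx.approx.trace}, Proposition~\ref{prop:norm.equiv}, and Lemma~\ref{lem:norm.interp}.
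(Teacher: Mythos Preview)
Your plan for the direction $\tnorm[1,2,T]{\uv[T]}\lesssim\norm[L^2(T)^d]{\GRAD\vh}$ is more elaborate than necessary but essentially workable. The paper's argument here is shorter and does not pass through $\seminorm[2,\partial T]{\cdot}$ at all: it simply uses $v_T=\lproj[T]{k-1}\vh$ together with the approximation estimates \eqref{eq:approx.approx.trace} (with $\ell=k-1$, $s=1$) to bound $\norm[L^2(T)^d]{\GRAD v_T}$ and each $\norm[L^2(F)]{v_F-v_T}=\norm[L^2(F)]{\lproj[F]{k}(\vh-\lproj[T]{k-1}\vh)}$ directly by $\norm[L^2(T)^d]{\GRAD\vh}$.

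The genuine gap is in the reverse direction $\norm[L^2(T)^d]{\GRAD\vh}\lesssim\tnorm[1,2,T]{\uv[T]}$. Two issues compound here. First, the identity you announce, $\GRAD\rT\uv[T]=\vlproj[T]{k}\GRAD\vh$, is false: what holds is $\GT\uv[T]=\vlproj[T]{k}\GRAD\vh$ (by \eqref{GT:commutativity}) and, separately, $\GRAD\rT\uv[T]=\GRAD\eproj[T]{k+1}\vh$, which is the orthogonal projection of $\GRAD\vh$ onto $\GRAD\Poly{k+1}(T)\subsetneq\Poly{k}(T)^d$. Second, and more importantly, when you try to bound $\norm[L^2(T)^d]{(\mathrm{Id}-\vlproj[T]{k})\GRAD\vh}$ (or $\norm[L^2(T)^d]{\GRAD(\vh-\eproj[T]{k+1}\vh)}$) by $\seminorm[2,\partial T]{\uv[T]}$, you are invoking exactly the inequality you are trying to prove: indeed $\vh-\eproj[T]{k+1}\vh\in\Vhk(T)$ and $\IT(\vh-\eproj[T]{k+1}\vh)=\uv[T]-\IT\rT\uv[T]$, so this bound is \eqref{eq:ncVEM.norm} applied to $\vh-\eproj[T]{k+1}\vh$. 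The earlier part of your argument only established the \emph{opposite} inequality $\seminorm[2,\partial T]{\uv[T]}\lesssim\norm[L^2(T)^d]{\GRAD(\vh-\eproj[T]{k+1}\vh)}$. You never actually exploit the defining structure of $\Vhk(T)$ (namely $\Delta\vh\in\Poly{k-1}(T)$ and $\normal_{TF}\SCAL\GRAD\vh_{|F}\in\Poly{k}(F)$) to control $\norm[L^2(T)^d]{\GRAD\vh}$, and this is precisely what is needed. The paper's argument is a one-line integration by parts: expand $\norm[L^2(T)^d]{\GRAD\vh}^2=-(\vh,\Delta\vh)_T+\sum_{F}(\vh,\GRAD\vh\SCAL\normal_{TF})_F$, use the polynomial memberships to replace $\vh$ by $v_T$ and $v_F$, integrate back by parts to get $(\GRAD v_T,\GRAD\vh)_T+\sum_F(v_F-v_T,\GRAD\vh\SCAL\normal_{TF})_F$, and conclude by Cauchy--Schwarz and the discrete trace inequality.

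Finally, your derivation of \eqref{eq:ncVEM.norm.2} misidentifies the seminorm: in the paper $\seminorm[1,2,T]{\cdot}$ stands for $\seminorm[2,\partial T]{\cdot}$, not a norm involving $\norm[L^2(T)]{v_T}$. The correct deduction is immediate: the hypothesis (which should read $\eproj[T]{k+1}\vh=0$) gives $\rT\uv[T]=\eproj[T]{k+1}\vh=0$, hence $\uv[T]=\uv[T]-\IT\rT\uv[T]$, and Lemma~\ref{lem:norm.interp} with \eqref{eq:ncVEM.norm} yields $\norm[L^2(T)^d]{\GRAD\vh}\simeq\tnorm[1,2,T]{\uv[T]-\IT\rT\uv[T]}\simeq\seminorm[2,\partial T]{\uv[T]}$.
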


\begin{proof} Here, $a\lesssim b$ means $a\le Cb$ with $C$ as in the statement.
Since $\GRAD\vh\SCAL\normal_{TF}\in\Poly{k}(F)$ for all $F\in\Fh[T]$ and $\Delta \vh\in\Poly{k-1}(T)$, integrating by parts and setting $\uv[T]\coloneq\IT[k,k-1]\vh$,
\begin{align*}
\norm[L^2(T)^d]{\GRAD\vh}^2={}&-(\vh,\Delta \vh)_T + \sum_{F\in\Fh[T]}(\vh,\GRAD\vh\SCAL\normal_{TF})_F\\
={}&(-v_T,\Delta \vh)_T+\sum_{F\in\Fh[T]}(v_F,\GRAD\vh\SCAL\normal_{TF})_F\\
={}&(\GRAD v_T,\GRAD \vh)_T+\sum_{F\in\Fh[T]}(v_F-v_T,\GRAD\vh\SCAL\normal_{TF})_F.
\end{align*}
The Cauchy--Schwarz inequality and the trace inequality \eqref{eq:Lp.trace.discrete} applied on $\GRAD\vh\SCAL\normal_{TF}$ then yield $\norm[L^2(T)^d]{\GRAD\vh}\lesssim \tnorm[1,2,T]{\uv[T]}$,
which is half of \eqref{eq:ncVEM.norm}.

To prove the second half for $k\ge 1$, recall first that $v_T=\lproj[T]{k-1}\vh$. A
triangle inequality and \eqref{eq:approx} with $\ell=k-1$, $p=2$, $\alpha=0$ and $r=s=1$ thus show that
\begin{equation}\label{eq:norm.ncVEM.a}
\norm[L^2(T)^d]{\GRAD v_T}\lesssim \norm[L^2(T)^d]{\GRAD \vh}.
\end{equation}
Then, write
\[
\norm[L^2(F)]{v_F-v_T}=\norm[L^2(F)]{\lproj[F]{k}(\vh-\lproj[T]{k-1}\vh)}
\le \norm[L^2(F)]{\vh-\lproj[T]{k-1}\vh}\le h_T^{\frac{1}{2}}\norm[L^2(T)^d]{\GRAD\vh},
\]
where \eqref{eq:approx.trace} was used with $\ell=k-1$, $p=2$, $\alpha=0$, $r=0$ and $s=1$. Combining this
estimate with \eqref{eq:norm.ncVEM.a} concludes the proof of \eqref{eq:ncVEM.norm}.
The above argument can be extended along similar lines to the case $k=0$, the only variation being linked to the fact that, in this case, $v_T$ is not an $L^{2}$-orthogonal projection of $\mathfrak{v}_h$ (see \eqref{eq:vT.l<0}).

To prove \eqref{eq:ncVEM.norm.2}, notice that if $\eproj[T]{k+1}\vh=0$ then
$\rT\uv[T]=\rT\IT\vh=\eproj[T]{k+1}\vh=0$. Hence, \eqref{eq:ncVEM.norm} yields
\[
\norm[L^2(T)^d]{\GRAD\vh}\simeq \tnorm[1,2,T]{\uv[T]-\IT\rT\uv[T]},
\]
and the proof is complete by invoking \eqref{equiv.norm.residuals}.
\end{proof}

\subsection{Main results}

We are now ready to prove the main results stated in Section~\ref{sec:main.results}.

\subsubsection{Properties of Discontinuous Skeletal Gradient Discretisations}\label{sec:proof:DSGDM}

\begin{proof}[Proof of Theorem \ref{thm:DSGDM}]
  We use a polytopal toolbox in the spirit of~\cite{Droniou-Eymard-Herbin:2015} and~\cite[Section~7.2]{gdm}.
  Let
  \[
  X_{\Th,0}\coloneq\left\{\uw=\left((w_T)_{T\in\Th},(w_F)_{F\in\Fh}\right)\st w_T\in\Real\,,\;w_F\in\Real\,,\;w_F=0\ \ \forall F\in\Fhb\right\}
  =\Uhz[0,0]
\]
and $\Pi_{\Th}:X_{\Th,0}\to L^p(\Omega)$, $\GRAD_{\Th}:X_{\Th,0}\to L^p(\Omega)^d$ be defined by,
for all $\uw\in X_{\Th,0}$ and all $T\in\Th$,
\[
(\Pi_{\Th}\uw)_{|T}=w_T\quad\mbox{ and }\quad (\GRAD_{\Th}\uw)_{|T}=\frac{1}{|T|}\sum_{F\in\Fh[T]}|F|w_F\mathbf{n}_{TF}.
\]
By \cite[Corollary 7.12]{gdm}, the coercivity, limit-conformity, consistency, and compactness of $(\GD[h])_{h\in\mathcal H}$ follow if we find
a mapping $\control:\Uhz\to X_{\Th,0}$ (``control'' of $\GD[h]$) such that, recalling the definition \eqref{eq:tnorm}
of $\tnorm[1,p,h]{{\cdot}}$ and setting
\begin{subequations}\label{eq:control}
  \begin{align}
    \norm[{\GD[h],\Th}]{\control}&\coloneq\max_{\uv\in \Uhz\setminus\{\underline{0}_h\}}\frac{\tnorm[1,p,h]{\control(\uv)}}{\norm[L^p(\Omega)^d]{\grD[h]\uv}}\,,\label{eq:control:1} \\
    \omega^{\Pi}(\GD[h],\Th,\control)&\coloneq\max_{\uv\in\Uhz\setminus\{\underline{0}_h\}}\frac{\norm[L^p(\Omega)]{\PiD[h]\uv - \Pi_{\Th}\control(\uv)}}{\norm[L^p(\Omega)^d]{\grD[h]\uv}}\,,\\
    \omega^{\GRAD}(\GD[h],\Th,\control)&\coloneq\max_{\uv\in\Uhz\setminus\{\underline{0}_h\}}\frac{\left(\sum_{T\in\Th}|T|^{1-p}\left|\int_T (\grD[h]\uv - \GRAD_{\Th}\control(\uv))\right|^p\right)^{\frac{1}{p}}}{\norm[L^p(\Omega)^d]{\grD[h]\uv}}\,,\;
  \end{align}
\end{subequations}
we have
\begin{equation}\label{prop.control}
\norm[{\GD[h],\Th}]{\control}\lesssim 1\mbox{ and, as $h\to 0$}\,,\;
\omega^\Pi(\GD[h],\Th,\control)\to 0\mbox{ and }
\omega^\GRAD(\GD[h],\Th,\control)\to 0,
\end{equation}
where $a\lesssim b$ means $a\le C b$ with real number $C>0$ independent of $h$, but possibly depending on $d$, $\varrho$, $k$, $l$, and $k_{\rm S}$.

Let $\control$ be defined the following way.
For all $\uv=\left((v_T)_{T\in\Th},(v_F)_{F\in\Fh}\right)\in\Uh$, we let $\control(\uv)\coloneq\uvm=\left((v_T^0)_{T\in\Th},(v_F^0)_{F\in\Fh}\right)\in X_{\Th,0}$ be such that $v_T^0=\lproj[T]{0}v_T$ for all $T\in\Th$ and $v_F^0=\lproj[F]{0}v_F$ for all $F\in\Fh$. Properties \eqref{prop.control} follow if we establish that, for all $\uv\in\Uhz$,
\begin{subequations}\label{eq:PTB}
  \begin{align}
    \tnorm[1,p,h]{\uvm}
    &\lesssim \norm[L^p(\Omega)^d]{\grD[h]\uv},\label{eq:PTB:1}
    \\
    \norm[L^p(T)]{\PiD[h]\uv-\Pi_{\Th}\uvm}&\lesssim h\norm[L^p(\Omega)^d]{\grD[h]\uv},\label{eq:PTB:2} 
  \end{align}
  and, for all $T\in\Th$ and all $\uv[T]\in\UT$,
  \begin{equation}
    (\grT\uv[T],\vec{\eta})_T= \sum_{F\in\Fh[T]} (v_F^0,\vec{\eta}\SCAL\normal_{TF})_F
    \qquad\forall\vec{\eta}\in\Poly{0}(T)^d.\label{eq:PTB:3}
  \end{equation}
\end{subequations}
Indeed, \eqref{eq:PTB:1} gives a bound on $\norm[{\GD[h],\Th}]{\control}$,
\eqref{eq:PTB:2} gives an $\mathcal O(h)$ estimate on $\omega^\Pi(\GD[h],\Th,\control)$, and
\eqref{eq:PTB:3} shows that $\omega^\GRAD(\GD[h],\Th,\control)=0$.

(i) \emph{Proof of~\eqref{eq:PTB:1}.}
By the definition~\eqref{eq:tnorm} of the $\tnorm[1,p,h]{{\cdot}}$-seminorm along with that of $\uvm$, we have that
\begin{equation}\label{eq:DSGDM:0}
  \tnorm[1,p,h]{\uvm}^p=\sum_{T\in\Th}\sum_{F\in\Fh[T]}h_F^{1-p}\norm[L^p(F)]{v_F^0 - v_T^0}^p.
\end{equation}
Let now a mesh element $T\in\Th$ be fixed and observe that, for all $F\in\Fh[T]$,
\begin{align}
  \norm[L^p(F)]{v_F^0-v_T^0}
  = \norm[L^p(F)]{\lproj[F]{0}(v_F-v_T^0)}
  &\le\norm[L^p(F)]{v_F-v_T^0}\nonumber
  \\
  &\le \norm[L^p(F)]{v_F-v_T}
  + \norm[L^p(F)]{v_T-v_T^0}\nonumber
  \\
  &\lesssim \norm[L^p(F)]{v_F-v_T} + h_T^{-\frac1p}\norm[L^p(T)]{v_T-v_T^0}\nonumber
  \\
  &\lesssim \norm[L^p(F)]{v_F-v_T} + h_T^{1-\frac1p}\norm[L^p(T)^d]{\GRAD v_T},
\label{proof:coer:1}
\end{align}
where we have used the $L^p$-boundedness of the $L^2$-orthogonal projector (see~\cite[Lemma~3.2]{Di-Pietro.Droniou:16}) in the first line, the triangle inequality in the second line, the discrete $L^p$-trace inequality~\eqref{eq:Lp.trace.discrete} in the third line, and a local Poincar\'{e}--Wirtinger inequality which can be inferred from~\eqref{eq:approx} with $\alpha=\ell=r=0$ and $s=1$ to conclude.

Taking the $p$th power of \eqref{proof:coer:1}, multiplying by $h_F^{1-p}\simeq h_T^{1-p}$ and summing over $F\in \Fh[T]$ and $T\in\Th$ leads to $\tnorm[1,p,h]{\uvm}\lesssim \tnorm[1,p,h]{\uv}$.
Estimate \eqref{eq:PTB:1} follows by using \eqref{eq:norm.equiv}.

  (ii) \emph{Proof of~\eqref{eq:PTB:2}.} Using a local Poincar\'{e}--Wirtinger inequality as above we infer, for all $T\in\Th$, $\norm[L^p(T)]{v_T-v_T^0}\lesssim h_T\norm[L^p(T)^d]{\GRAD v_T}$. Taking the $p$th power of this inequality, summing over $T\in\Th$, and using $h_T\le h$ and the uniform norm equivalence~\eqref{eq:norm.equiv} to bound the right-hand side,~\eqref{eq:PTB:2} follows.

  (iii) \emph{Proof of~\eqref{eq:PTB:3}.} Let $\vec{\eta}\in\Poly{0}(T)^d\subset\Poly{k}(T)^d$.
  Since $\DIV\vec{\eta}=0$, using the orthogonality property \rprop{S2} followed by the definition \eqref{eq:GT} of $\GT$ we infer that
$(\grT\uv[T],\vec{\eta})_T= \sum_{F\in\Fh[T]}(v_F,\vec{\eta}\SCAL\normal_{TF})_F$.
Equation~\eqref{eq:PTB:3} then follows by noticing that, $\vec{\eta}\SCAL\normal_{TF}$ being constant
on $F$, $(v_F,\vec{\eta}\SCAL\normal_{TF})_F=(\lproj[F]{0}v_F,\vec{\eta}\SCAL\normal_{TF})_F
=(v_F^0,\vec{\eta}\SCAL\normal_{TF})_F$.

\medskip

The GD-consistency follows from Proposition~\ref{prop:est.SD.WD} (proved below), and from
\cite[Lemma 2.17]{gdm} which shows that the consistency holds provided that $\SD[h](\phi)\to 0$
for all $\phi$ in a dense subset of $W^{1,p}_0(\Omega)$.
\end{proof}

\begin{remark}[Condition~\eqref{eq:PTB:1}]
  In~\cite{gdm}, a slightly different norm is considered in the argument of the maximum in~\eqref{eq:control:1}. The original expression is obtained replacing $\tnorm[1,p,h]{\uv^0}^p$ by
  $$
  \sum_{T\in\Th}\sum_{F\in\Fh[T]} d_{TF}^{1-p} \norm[L^p(F)]{v_F^0 - v_T^0}^p,
  $$
  where the only difference with respect to~\eqref{eq:DSGDM:0} is that the role of the local length scale is played by $d_{TF}$, the orthogonal distance between a point $\vec{x}_T$ inside $T$ and the face $F$, instead of $h_F$.
  If, for all $T\in\Th$, we choose the point $\vec{x}_T$ such that condition~\eqref{eq:dTF.hT} is verified, it can easily be proved that $d_{TF}\simeq h_F$, and the two norms are uniformly equivalent.
\end{remark}

\subsubsection{Estimates on $\SD$ and $\WD$}\label{sec:est.SD.WD}

\begin{proof}[Proof of Proposition~\ref{prop:est.SD.WD}]
(i) \emph{Estimates on the addends in $\SD[h]$.} Take $\phi\in W^{1,p}_0(\Omega)\cap W^{l+1,p}(\Th)$ and
let $\uv[h]=\Ih\phi\in \Uh$. For all $T\in\Th$, if $l\ge 0$ then $(\PiD[h]\uv[h])_{|T}=v_T=\lproj[T]{l}\phi$ on $T$
so the approximation estimate \eqref{eq:approx} applied with $\alpha=0$, $\ell=l$, $s=\ell+1$ and $r=0$ yields
\begin{equation}\label{est:SD.1:l>=0}
\norm[L^p(T)]{\PiD[h]\uv[h]-\phi}\lesssim h_T^{l+1}\seminorm[W^{l+1,p}(T)]{\phi}.
\end{equation}
On the other hand, if $l=-1$, the specific choice~\eqref{eq:vT.l<0} of $v_T$ yields
  \begin{equation}\label{est:SD.1:l<0}
    \norm[L^p(T)]{\PiD[h]\uv[h]-\phi}\lesssim h_T\seminorm[W^{1,p}(T)]{\phi}.
  \end{equation}
  Combining~\eqref{est:SD.1:l>=0} and~\eqref{est:SD.1:l<0}, taking the $p$th power, summing over $T\in\Th$, and taking the $p$th root of the resulting inequality gives~\eqref{est:terms.SD:Lp}.

For a fixed mesh element $T\in\Th$, use the definition \eqref{eq:DSGDM:grD} of $\grD[h]$, the 
commutativity property \eqref{GT:commutativity} of $\GT$, the approximation property
\eqref{eq:approx} of $\lproj[T]{k}$ with $s=k+1$ and $r=0$, and the consistency
\eqref{eq:ST:consistency} of $\ST$ to obtain
$$
\begin{aligned}
  \norm[L^p(T)^d]{\grD[h]\uv[h]-\GRAD\phi}\le{}&
  \norm[L^p(T)^d]{\GT\uv[T]-\GRAD\phi}+\norm[L^p(T)^d]{\ST\uv[T]}\\
  \lesssim{}& h_T^{k+1}\seminorm[W^{k+1,p}(T)^d]{\GRAD\phi}+h_T^{k+1}\seminorm[W^{k+2,p}(T)]{\phi}
  \lesssim h_T^{k+1}\seminorm[W^{k+2,p}(T)]{\phi}.
\end{aligned}
$$
The estimate~\eqref{est:terms.SD:W1p} follows taking the $p$th power, summing over $T\in\Th$, and taking the $p$th root of the resulting inequality.

If $l\in\{k,k+1\}$ or $l=-1$ (in which case $k=0$), the estimate \eqref{est:SD} on $\SD[h](\phi)$ is an immediate consequence of~\eqref{est:terms.SD}.
Consider now $l=k-1$ and $k\ge 1$. An easy modification of the proof above shows that, for all $\phi\in W^{1,p}_0(\Omega)\cap
W^{k+1,p}(\Th)$, $\norm[L^p(\Omega)^d]{\grD[h]\uv[h]-\GRAD\phi}\lesssim h^{k}\norm[W^{k+1,p}(\Th)]{\phi}$.
Then \eqref{est:SD} follows from this modified version of \eqref{est:terms.SD:W1p} and from \eqref{est:terms.SD:Lp}.

(ii) \emph{Estimate on $\WD[h](\vec{\psi})$.} For all $\uv[h]\in \XDz[h]$,
\begin{equation}\label{est:WD.0}
  \begin{aligned}
    \int_\Omega \grD[h]\uv[h](\vec{x})\SCAL\vec{\psi}(\vec{x})\ud\vec{x}
    &=\sum_{T\in\Th}(\grD[h]\uv[h],\vec{\psi}-\vlproj[T]{k}\vec{\psi})_T
    + \sum_{T\in\Th}(\grD[h]\uv[h],\vlproj[T]{k}\vec{\psi})_T
    \\
    &\eqcolon\sum_{T\in\Th}\mathcal A_T+\sum_{T\in\Th}\mathcal B_T.
  \end{aligned}
\end{equation}
The approximation property \eqref{eq:approx} of $\lproj[T]{k}$ with $s=k+1$, $r=0$ and $p'$ instead of $p$ yields
\begin{equation}\label{est:WD.A}
  \begin{aligned}
    \sum_{T\in\Th}|\mathcal A_T|&\lesssim \sum_{T\in\Th}h_T^{k+1}\norm[L^p(T)^d]{\grD[h]\uv[h]}
    \norm[W^{k+1,p'}(T)^d]{\vec{\psi}}
    \\
    &\le h^{k+1}\left(\sum_{T\in\Th}\norm[L^p(T)^d]{\grD[h]\uv[h]}^p\right)^{1/p}
    \left(\sum_{T\in\Th} \norm[W^{k+1,p'}(T)^d]{\vec{\psi}}^{p'}\right)^{1/p'}
    \\
    &=
    h^{k+1}\norm[L^p(\Omega)^d]{\grD[h]\uv[h]}\norm[W^{k+1,p'}(\Th)^d]{\vec{\psi}}.
  \end{aligned}
\end{equation}
By definitions \eqref{eq:DSGDM:grD} and \eqref{eq:GT} of $\grD[h]$ and $\GT$,
and by the orthogonality property \rprop{S2} of $\ST$,
\begin{equation}\label{est:WD.BT}
  \begin{aligned}
    \mathcal B_T =(\GT\uv[T],\vlproj[T]{k}\vec{\psi})_T
    ={}&-(v_T,\GRAD\SCAL \vlproj[T]{k}\vec{\psi})_T+\sum_{F\in\Fh[T]}(v_F,\vlproj[T]{k}\vec{\psi}\SCAL\normal_{TF})_F\\
    ={}&-(v_T,\GRAD\SCAL (\vlproj[T]{k}\vec{\psi}-\vec{\psi}))_T+
    \sum_{F\in\Fh[T]}(v_F,(\vlproj[T]{k}\vec{\psi}-\vec{\psi})\SCAL\normal_{TF})_F\\
    {}&-(v_T,\GRAD\SCAL \vec{\psi})_T+
        \sum_{F\in\Fh[T]}(v_F,\vec{\psi}\SCAL\normal_{TF})_F\\
        ={}&(\GRAD v_T,(\vlproj[T]{k}\vec{\psi}-\vec{\psi}))_T+
        \sum_{F\in\Fh[T]}(v_F-v_T,(\vlproj[T]{k}\vec{\psi}-\vec{\psi})\SCAL\normal_{TF})_F\\
        {}&-(\PiD[h]\uv[h],\GRAD\SCAL \vec{\psi})_T+
            \sum_{F\in\Fh[T]}(v_F,\vec{\psi}\SCAL\normal_{TF})_F\\
            \eqcolon{}&\mathcal B_{T,1}-(\PiD[h]\uv[h],\GRAD\SCAL \vec{\psi})_T+
            \sum_{F\in\Fh[T]}(v_F,\vec{\psi}\SCAL\normal_{TF})_F,
  \end{aligned}
\end{equation}
where we used an integration-by-parts and the definition \eqref{eq:DSGDM:XDz.PiD} of $\PiD[h]$ in the penultimate line. For any interface $F$ with $T_1,T_2$ as neighbouring mesh elements, since $\vec{\psi}\in \vec{W}^{p'}(\oDIV;\Omega)$
we have $\vec{\psi}\SCAL\normal_{T_1F}+\vec{\psi}\SCAL\normal_{T_2F}=0$ on $F$. Moreover, $v_F=0$ whenever $F$ is a boundary
face. Hence 
\[
\sum_{T\in\Th} \sum_{F\in\Fh[T]}(v_F,\vec{\psi}\SCAL\normal_{TF})_F=
\sum_{F\in\Fhi} (v_F,\vec{\psi}\SCAL\normal_{T_1F}+\vec{\psi}\SCAL\normal_{T_2F})_F
+\sum_{F\in\Fhb} (v_F,\vec{\psi}\SCAL\normal_{TF})_F=0.
\]
Summing \eqref{est:WD.BT} over $T\in\Th$ and using the previous relation leads to
\begin{equation}
\sum_{T\in\Th}\mathcal B_T =\sum_{T\in\Th}\mathcal B_{T,1}
-\int_\Omega \PiD[h]\uv[h](\vec{x})\GRAD\SCAL \vec{\psi}(\vec{x})\ud\vec{x}.
\label{est:WD.BT.2}
\end{equation}
Recalling the definition~\eqref{eq:lproj} of $\vlproj[T]{k}$, it is readily inferred that the first term in $\mathcal B_{T,1}$ is zero since $\GRAD v_T\in\GRAD\Poly{l}(T)\subset\Poly{k}(T)^d$. Moreover, using again the approximation properties \eqref{eq:approx.approx.trace} of $\lproj[T]{k}$ with $r=0$, $s=k+1$
and $p'$ instead of $p$, we can write
\begin{align*}
  |\mathcal B_{T,1}|\lesssim{}&
  \sum_{F\in\Fh[T]} \norm[L^p(F)]{v_F-v_T}\norm[L^{p'}(F)^d]{\vlproj[T]{k}\vec{\psi}-\vec{\psi}}\\
  \lesssim{}&
  \sum_{F\in\Fh[T]} h_T^{k+1-\frac{1}{p'}}\norm[L^p(F)]{v_F-v_T}\seminorm[W^{k+1,p'}(T)^d]{\vec{\psi}}\\
  \lesssim{}&
  h^{k+1}\seminorm[W^{k+1,p'}(T)^d]{\vec{\psi}}\left( 
  \sum_{F\in\Fh[T]} h_F^{\frac{1}{p}-1}\norm[L^p(F)]{v_F-v_T}
  \right),
\end{align*}
where we used $h_F\le h_T$ in the last line. Sum over $T\in\Th$ and invoke H\"older's inequality,
the property ${\rm Card}(\Fh[T])\lesssim 1$, and the norm equivalence \eqref{eq:norm.equiv} to deduce
\begin{align}
	\sum_{T\in\Th}|\mathcal B_{T,1}|
	\lesssim{}& h^{k+1}\seminorm[W^{k+1,p'}(\Th)^d]{\vec{\psi}}
	\left(
        \sum_{T\in\Th}\sum_{F\in\Fh[T]}h_F^{1-p}\norm[L^p(F)]{v_F-v_T}^p\right)^{1/p}\nonumber\\
 	\lesssim{}& h^{k+1}\norm[W^{k+1,p'}(\Th)^d]{\vec{\psi}}\norm[L^p(\Omega)^d]{\grD[h]\uv[h]}.
\label{est:WD.B}
\end{align}
Finally, using \eqref{est:WD.A}, \eqref{est:WD.BT.2} and \eqref{est:WD.B} in \eqref{est:WD.0}, we get
$$
\left|\int_\Omega \grD[h]\uv[h](\vec{x})\SCAL\vec{\psi}(\vec{x})\ud\vec{x}
+\int_\Omega \PiD[h]\uv[h](\vec{x})\DIV\vec{\psi}(\vec{x})\ud\vec{x}\right|
\lesssim h^{k+1}\norm[W^{k+1,p'}(\Th)^d]{\vec{\psi}}\norm[L^p(\Omega)^d]{\grD[h]\uv[h]}.
$$
The estimate \eqref{est:WD} follows immediately.
\end{proof}

\begin{remark}[Choice of the gradient reconstruction]\label{rem:GT.vs.grad.rT}
  An inspection of the above proof shows that using \eqref{def:grad.rT} in place of~\eqref{eq:DSGDM:grD} 
  can lead to significant losses in the order of convergence for $\WD[h](\vec{\psi})$ (while the convergence expressed by~\eqref{WD.to.0} still holds true).
  As a matter of fact, with this choice one would have to replace throughout the proof $\vlproj[T]{k}\vec{\psi}$ by the $L^2$-orthogonal projection of $\vec{\psi}$ on $\GRAD\Poly{k+1}(T)$.
  The latter quantity has optimal approximation properties only if either $k=0$ (since $\GRAD\Poly{1}(T)=\Poly{0}(T)^d$) or there exists $w\in L^{p'}(\Omega)$ such that $\vec{\psi}_{|T}=\GRAD w_{|T}$ for all $T\in\Th$.
  Recalling Theorem \ref{thm:error.est.example} with $p=2$ and $\vec{\sigma}(\vec{x},u,\GRAD u)=\GRAD u$, we see that for the Poisson equation, $\WD[h]$ is applied to $\vec{\psi}=\GRAD u$. In this case,
  the gradient reconstruction \eqref{def:grad.rT} leads to optimal convergence rates.
  However, there is a real loss of estimate for more general problems for which error estimates are written in terms of $\WD[h](\matr{\Lambda}\GRAD u)$ (for anisotropic linear diffusion, see \cite[Theorem 2.29]{gdm}) or $\WD[h](|\GRAD u|^{p-2}\GRAD u)$
  (for the $p$-Laplace equation, see Theorem \ref{thm:error.est.example}).
\end{remark}


\section*{\refname}

\bibliographystyle{elsarticle-harv}
\bibliography{gdmpho}

\end{document}